\newcommand{\R}{\mathbbm{R}}    
\newcommand{\Z}{\mathbbm{Z}}    
\providecommand{\ab}[1]{\vert #1\vert}		
\newcommand{\N}{\mathbbm{N}}    
\newcommand{\te}{\theta}
\newcommand{\vphi}{\varphi}
\newcommand{\la}{\lambda}
\newcommand{\eps}{\varepsilon}
\providecommand{\norma}[1]{\Vert #1 \Vert}          
\providecommand{\Norma}[1]{\Bigl\Vert #1 \Bigr\Vert}          
\newcommand{\cp}{\mathcal{C}}  
\providecommand{\suma}[2]{\sum\limits_{#1}^{#2}}  
\providecommand{\abs}[1]{\left\vert #1 \right\vert}           
\providecommand{\ab}[1]{\vert #1\vert}           
\renewcommand{\leq}{\leqslant}
\renewcommand{\geq}{\geqslant}
\renewcommand\Re{\operatorname{\mathfrak{Re}}}
\numberwithin{equation}{section}
\theoremstyle{plain}
\newtheorem{teo}{Theorem}[section]
\newtheorem{cor}[teo]{Corollary}
\newtheorem{lemma}[teo]{Lemma}
\newtheorem{prop}[teo]{Proposition}
\theoremstyle{definition}
\newtheorem{define}[teo]{Definition}
\newtheorem{remark}[teo]{Remark}
\begin{document}
\title[On extremizing sequences for the cone]{On extremizing sequences for the adjoint restriction inequality on the cone}
\author{Ren\'e Quilodr\'an}
\address{Department of Mathematics, University of California, Berkeley, CA 94720-3840, USA}
\email{rquilodr@math.berkeley.edu}
\thanks{Research supported in part by NSF grant DMS-0901569.}

\begin{abstract}
 It is known that extremizers for the $L^2$ to $L^6$ adjoint Fourier restriction inequality on the cone in $\R^3$ exist. Here, we
show that nonnegative extremizing sequences are precompact, after the application of symmetries of the cone. If we use the
knowledge of the exact form of the extremizers, as found by Carneiro, then we can show that nonnegative extremizing sequences
converge, after the application of symmetries.
\end{abstract}

\maketitle
\section{Introduction}
We study the properties of extremizing sequences for the Fourier restriction inequality on the cone in dimension $3$ for which 
the adjoint restriction inequality can be rewritten equivalently as a convolution inequality. Carneiro \cite{Ca}, using the 
method developed by Foschi \cite{Fo}, found the exact form of the extremizers for the adjoint Fourier restriction inequalities in
dimensions $3$ and $4$, but there seems to be no mention in the literature as to whether extremizing sequences are precompact
after
appropriate rescaling\footnote{Fanelli, Vega and Visciglia \cite{FVV2} answer this question in the nonendpoint case and appeared
while this manuscript was
being prepared. We comment on that later in the Introduction.}. That is the question we try to answer in this paper using the
methods developed by Christ and Shao \cite{CS} to analyze the corresponding inequality for the sphere in three dimensions.

We denote $\Gamma^2=\{(y,y')\in\R^2\times \R:y'=\ab{y}\}$, the cone in $\R^3$. A function $f$ on $\Gamma^2$ can be identified, 
and we will do so, with a function from $\R^2$ to $\R$. On $\Gamma^2$ we consider the measure
$\sigma(y,y')=\delta(y'-\ab{y})\ab{y}^{-1}dydy'$, that is, for a function $f$ on the cone
\[\int_{\Gamma^2} fd\sigma=\int_{\R^2} f(y)\frac{dy}{\ab{y}}.\]
We will denote the $L^p(\Gamma^2,\sigma)$ norm of a function $f$ as $\norma{f}_{L^p(\Gamma^2)},\,\norma{f}_{L^p(\sigma)}$ or 
$\norma{f}_p$.

The extension or adjoint Fourier restriction operator for the cone is given by
\begin{equation}
 \label{fourier-extension-operator}
 Tf(x,t)=\int_{\R^2} e^{ix\cdot y} e^{it\ab{y}} f(y)\ab{y}^{-1}dy,
\end{equation}
where $(x,t)\in\R^2\times \R$ and $f\in \mathcal S(\R^2)$. With the Fourier transform $\hat g(\xi)=\break\int_{\R^3} e^{-ix\cdot
\xi}g(x)dx$, we see that $Tf(x,t)=\widehat{f\sigma}(-x,-t)$.

A well known bound \cite{Str} for $Tf$ is given in the following theorem:
\begin{teo}
\label{adjoint-fourier-restriction}
 There exists $C<\infty$ such that for all $f\in L^2(\Gamma^2)$ the following inequality holds:
\begin{equation}
\label{restriction-cone-with-c}
 \norma{Tf}_{L^6(\R^3)}\leq C\norma{f}_{L^2(\Gamma^2)}.
\end{equation}
\end{teo}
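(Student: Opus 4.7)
The plan is to reduce the extension estimate to a trilinear convolution estimate via Plancherel, and then to control that convolution with a single application of pointwise Cauchy--Schwarz. Since $Tf(x,t) = \widehat{f\sigma}(-x,-t)$, cubing $Tf$ and applying Plancherel gives
\[
\|Tf\|_{L^6(\R^3)}^6 = \|(Tf)^3\|_{L^2(\R^3)}^2 = c\,\|f\sigma * f\sigma * f\sigma\|_{L^2(\R^3)}^2,
\]
so the theorem is equivalent to the trilinear convolution bound
\[
\|f\sigma * f\sigma * f\sigma\|_{L^2(\R^3)} \leq C\|f\|_{L^2(\sigma)}^3.
\]

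The next step is pointwise Cauchy--Schwarz on the fiber $\{(y_1,y_2,y_3):y_1+y_2+y_3=\xi\}$ with respect to the conditional measure induced by $\sigma\otimes\sigma\otimes\sigma$. This yields
\[
\abs{f\sigma*f\sigma*f\sigma(\xi)}^2 \leq (\sigma*\sigma*\sigma)(\xi)\cdot\bigl(\abs{f}^2\sigma*\abs{f}^2\sigma*\abs{f}^2\sigma\bigr)(\xi).
\]
Integrating over $\xi$ and using $\int g_1*g_2*g_3\,d\xi=\prod_{i=1}^{3}\int g_i\,d\xi$ gives
\[
\|f\sigma*f\sigma*f\sigma\|_{L^2}^2 \leq \|\sigma*\sigma*\sigma\|_{L^\infty}\cdot\|f\|_{L^2(\sigma)}^6,
\]
so the entire proof reduces to the assertion $\|\sigma*\sigma*\sigma\|_{L^\infty(\R^3)}<\infty$.

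This last bound I would verify by exploiting the full symmetry group of the cone. The measure $\sigma$ is Lorentz invariant, and a bookkeeping of how $\sigma$ transforms under dilations (from $d\sigma=dy/\ab{y}$) shows that $\sigma*\sigma*\sigma$, viewed as a function on $\R^3$, is Lorentz invariant, homogeneous of degree zero, and supported in the closed forward solid cone. Since the proper orthochronous Lorentz group together with positive dilations acts transitively on the open forward solid cone, $\sigma*\sigma*\sigma$ must be a constant multiple of the indicator of that cone. A direct computation in the distinguished frame $\xi=(0,0,t)$ gives first $\sigma*\sigma(\xi)=2\pi/\sqrt{\xi\cdot\xi}$ in the forward cone (where $\xi\cdot\xi$ is the Minkowski square), and then convolving once more against $d\sigma$ in polar coordinates produces an explicit finite constant. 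The main obstacle is precisely this last step: $\sigma*\sigma$ is singular on the boundary of the forward cone, and one must check that the singularity is integrable against the final copy of $\sigma$; the polar-coordinate parametrization renders the apparent singularity manifestly integrable, so the bound goes through.
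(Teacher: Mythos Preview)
Your argument is correct and is essentially the method of Foschi \cite{Fo} and Carneiro \cite{Ca}; the paper instead takes a dyadic--bilinear route. It decomposes $f=\sum_{k\in\Z}f_k$ with $f_k$ supported on $\{2^{k-1}\leq\ab{y}<2^k\}$, proves the bilinear estimate $\norma{Tf_k\cdot Tf_{k'}}_{L^3}\lesssim 2^{-\ab{k-k'}/6}\norma{f_k}_2\norma{f_{k'}}_2$ via an explicit change of variables, Hausdorff--Young, and fractional integration on the circle, and then sums using Cauchy--Schwarz and the geometric decay in $\ab{k-k'}$. Your convolution argument is cleaner for the bare inequality and in fact delivers the sharp constant, since $\sigma*\sigma*\sigma$ is an explicit constant on the open forward solid cone and equality in your pointwise Cauchy--Schwarz forces $\ab{f}^2$ to be an exponential of an affine function; this is how Theorem~\ref{extremizers-cone} is obtained. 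The paper's choice is strategic rather than aesthetic: the same change of variables underlying Lemma~\ref{bilinear-estimate} is reused in Sections~\ref{preliminaries}--\ref{section-cap-bound}, with fractional integration replaced by a refined local bound, to produce the cap estimate \eqref{cap-estimate-for-T-inequality} that drives the whole concentration--compactness argument. Your Cauchy--Schwarz-on-fibers approach does not appear to lead naturally to that refinement.
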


Denote by $\mathbf{C}$ the best constant in \eqref{restriction-cone-with-c}, that is,
\begin{equation}
 \label{best-constant}
\mathbf{C}=\sup\limits_{0\neq f\in L^2(\Gamma^2)}\frac{\norma{Tf}_{L^6(\R^3)}}{\norma{f}_{L^2(\Gamma^2)}}.
\end{equation}

The use of the Fourier transform allows us to write \eqref{restriction-cone-with-c} in ``convolution form'', namely
\begin{align}
 \norma{Tf}_{L^{6}(\R^{3})}^{3}&=\norma{(Tf)^{3}}_{L^2(\R^{3})}=\norma{(\widehat{f\sigma})^{3}}_{L^2(\R^{3})}
 =\norma{(f\sigma*f\sigma*f\sigma)\hat{}\,}_{L^2(\R^{3})}\nonumber\\
 \label{convolution-form}
 &=(2\pi)^{3/2}\norma{f\sigma*f\sigma*f\sigma}_{L^2(\R^{3})},
\end{align}
thus $\norma{T(f)}_{L^6}\leq \norma{T(\ab{f})}_{L^6}$. This implies that if $\{f_n\}_{n\in\N}$ is an extremizing sequence then so
is $\{\ab{f_n}\}_{n\in\N}$.

In what follows we will restrict attention to nonnegative functions $f\in L^2(\Gamma^2)$.

\begin{define}
\label{def-ext-sequence}
 An extremizing sequence for inequality \eqref{restriction-cone-with-c} is a sequence $\{f_n\}_{n\in\N}$ of functions in
$L^2(\Gamma^2)$ satisfying $\norma{f_n}_{L^2(\Gamma^2)}\leq 1$, such that $\norma{Tf_n}_{L^6(\R^{3})}\to \mathbf{C}$ as
$n\to\infty$.
 
 An extremizer for \eqref{restriction-cone-with-c} is a function $f\neq 0$ which satisfies 
 $\norma{Tf}_{L^6(\R^{3})}=\mathbf{C}\norma{f}_{L^2}$.
\end{define}

The main theorem of this paper is as follows:

\begin{teo}
\label{main-precompactness}
 Any extremizing sequence of nonnegative functions in $L^2(\Gamma^2)$ for inequality \eqref{restriction-cone-with-c} is 
 precompact up to symmetries, that is, every subsequence of an extremizing sequence has a sub-subsequence that converges in
$L^2(\Gamma^2)$ after the application of symmetries of the cone.
\end{teo}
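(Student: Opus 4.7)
The plan is to adapt to the cone the concentration-compactness template used by Christ and Shao \cite{CS} for the sphere. The symmetries of the inequality \eqref{restriction-cone-with-c} that preserve nonnegativity and act unitarily on $L^2(\sigma)$ are the parabolic dilations $f(y)\mapsto\lambda^{1/2}f(\lambda y)$ with $\lambda>0$, the rotations of $\R^2$, and the Lorentz boosts of $\Gamma^2$ (which pull back functions along the action on the base, weighted by the square root of the Jacobian). All of these leave $\norma{Tf}_{L^6(\R^3)}$ invariant. The aim is to produce, for any nonnegative extremizing sequence $\{f_n\}$, a sequence of such symmetries $\mathbf{S}_n$ so that, after passing to a subsequence, $\mathbf{S}_n f_n$ converges in $L^2(\sigma)$.

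The main technical step I would attempt first is a refined Strichartz-type inequality
\begin{equation*}
\norma{Tf}_{L^6(\R^3)} \leq C\,\norma{f}_{L^2(\sigma)}^{1-\theta}\Bigl(\sup_{\tau}\norma{f\chi_{\tau}}_{L^2(\sigma)}\Bigr)^{\theta},
\end{equation*}
valid for some $\theta\in(0,1)$, where $\{\tau\}$ ranges over a natural family of ``caps'' on $\Gamma^2$ on which the symmetry group above acts transitively --- for instance the lifts of Euclidean disks in $\R^2$ at arbitrary centre, scale, and Lorentz position. I expect the proof to proceed via a Whitney-type decomposition of $\Gamma^2\times\Gamma^2$ away from the diagonal, combined with a bilinear extension estimate for the cone in which transversality on each Whitney piece yields an improved exponent that is then interpolated against a trivial cap bound. \emph{This is the step I expect to be the main obstacle}, since controlling the Whitney decomposition uniformly under the Lorentz action is delicate.

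Granted such a refinement, an extremizing sequence $\{f_n\}$ with $\norma{f_n}_2\leq 1$ and $\norma{Tf_n}_6\to\mathbf{C}$ must satisfy $\sup_{\tau}\norma{f_n\chi_{\tau}}_2\geq\eta$ for some $\eta>0$ independent of $n$, since otherwise the right-hand side of the refinement would tend to $0$. Choose caps $\tau_n$ that nearly attain the supremum and symmetries $\mathbf{S}_n$ that send $\tau_n$ to a fixed cap $\tau_0$. After passing to a subsequence, $\tilde f_n:=\mathbf{S}_n f_n\rightharpoonup F$ weakly in $L^2(\sigma)$, with $F\geq 0$ and $\norma{F\chi_{\tau_0}}_2\geq\eta$; in particular $F\not\equiv 0$.

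To upgrade weak to strong convergence I would establish a Br\'ezis--Lieb-type splitting
\begin{equation*}
\norma{T\tilde f_n}_6^6 = \norma{TF}_6^6+\norma{T(\tilde f_n-F)}_6^6+o(1),
\end{equation*}
for which the convolution formulation \eqref{convolution-form} together with standard profile-decomposition arguments for oscillatory integrals should suffice. Setting $a:=\norma{F}_2>0$ and $b_n:=\norma{\tilde f_n-F}_2$, the Hilbert-space identity gives $a^2+b_n^2=\norma{\tilde f_n}_2^2\to 1$; combining the splitting with the bound $\norma{TG}_6\leq\mathbf{C}\norma{G}_2$ applied to $G=F$ and $G=\tilde f_n-F$, and the elementary strict inequality $a^6+b^6<(a^2+b^2)^3$ whenever $a,b>0$, I deduce $b_n\to 0$, and hence $\tilde f_n\to F$ strongly in $L^2(\sigma)$, which is the required conclusion.
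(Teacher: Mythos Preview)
Your overall strategy---refined estimate, normalise by symmetries, extract a nonzero weak limit, then a Br\'ezis--Lieb splitting to upgrade to strong convergence---is exactly the skeleton of the paper's first proof (Section~\ref{fvv-argument}). But three of the steps, as written, do not go through.

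First, the refined inequality you propose, with $\sup_\tau\norma{f\chi_\tau}_2$ taken over caps at ``arbitrary centre, scale, and Lorentz position'', is vacuous: the family is nested and exhausts $\Gamma^2$, so the supremum equals $\norma{f}_2$ and the inequality collapses to \eqref{restriction-cone-with-c}. The paper's refinement (Proposition~\ref{cap-estimate-for-T}) instead uses the scale-normalised quantity $\ab{\cp}^{-1/4}\int_{\cp}\ab{f}^{3/2}d\sigma$, which is not monotone in $\cp$ and genuinely detects the concentration scale. Its proof is not a Whitney--bilinear scheme but a direct polar-coordinate computation reducing to one-dimensional fractional integration; the Lorentz symmetry plays no role in the cap bound and enters only afterwards.

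Second, and more seriously, your claim that $\tilde f_n\rightharpoonup F$ forces $\norma{F\chi_{\tau_0}}_2\geq\eta$ is false: the $L^2$ norm on a fixed set is only lower semicontinuous under weak limits, and nothing you have said prevents $\tilde f_n$ from concentrating at a single point of $\tau_0$ and converging weakly to zero. The paper circumvents this by using the $L^{3/2}$ cap bound to extract a piece $g_n\leq C\ab{\cp_n}^{-1/2}\chi_{\cp_n}$ (Lemma~\ref{cap-decomposition}); the uniform $L^\infty$ control then yields a uniform $L^1$ lower bound on a fixed bounded set (Lemma~\ref{large-l1}, Corollary~\ref{appplying-lorentz-group}), and pairing against a characteristic function---which \emph{is} weakly continuous---forces $F\neq 0$. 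Third, the Br\'ezis--Lieb splitting you invoke requires $T\tilde f_n\to TF$ almost everywhere, which does not follow formally from weak $L^2$ convergence. The paper supplies this in Proposition~\ref{from-fvv2} via Rellich compactness of the embedding $\dot H^{1/2}\hookrightarrow L^2_{\mathrm{loc}}$ applied to $e^{it\sqrt{-\Delta}}$ at each fixed time; your appeal to ``standard profile decomposition'' is really an appeal to precisely this lemma, and it should be stated.
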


The symmetries of the cone we refer to are dilations and Lorentz transformations that will be studied in Section 
\ref{lorentz-invariance}, and Theorem \ref{main-precompactness} will be stated in a more precise form as Theorem
\ref{precompactness}.

With the knowledge of the exact form of the extremizers to \eqref{restriction-cone-with-c} given by Carneiro in \cite{Ca}, 
one can improve Theorem \ref{main-precompactness} to obtain the following:

\begin{teo}
 \label{better-precompactness}
 Any extremizing sequence of nonnegative functions in $L^2(\Gamma^2)$ for inequality \eqref{restriction-cone-with-c} 
 converges in $L^2(\Gamma^2)$, after the application of symmetries of the cone.
\end{teo}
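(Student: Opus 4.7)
The plan is to deduce Theorem \ref{better-precompactness} from Theorem \ref{main-precompactness} together with Carneiro's classification of extremizers in \cite{Ca}. By that classification, the set of nonnegative extremizers of unit $L^2$-norm forms a single orbit under the symmetry group $G$ of the cone (the dilations and Lorentz transformations treated in Section \ref{lorentz-invariance}). Fix a representative $g^{*}$ with $\norma{g^{*}}_{L^2(\Gamma^2)}=1$.

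Let $\{f_n\}$ be a nonnegative extremizing sequence. Since $\norma{Tf_n}_{L^6(\R^3)}\to \mathbf{C}$ and $\norma{f_n}_{L^2(\Gamma^2)}\le 1$, we have $\norma{f_n}_{L^2(\Gamma^2)}\to 1$. We seek symmetries $S_n\in G$ with $S_n f_n\to g^{*}$ in $L^2(\Gamma^2)$, and argue by contradiction: suppose there exist $\eps>0$ and a subsequence (still denoted $\{f_n\}$) such that
\[
 \inf_{S\in G}\norma{S f_n-g^{*}}_{L^2(\Gamma^2)} \geq \eps \qquad \text{for all } n.
\]

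By Theorem \ref{main-precompactness}, after passing to a further subsequence and applying symmetries $T_n\in G$, the sequence $T_n f_n$ converges in $L^2(\Gamma^2)$ to some nonnegative $h$. Since the elements of $G$ are isometries of $L^2(\Gamma^2)$ and preserve the extension norm $\norma{T(\cdot)}_{L^6(\R^3)}$ (as will be verified in Section \ref{lorentz-invariance}), the sequence $\{T_n f_n\}$ is itself extremizing; continuity of $T\colon L^2(\Gamma^2)\to L^6(\R^3)$ then forces $h$ to be a nonnegative extremizer with $\norma{h}_{L^2(\Gamma^2)}=1$. By Carneiro's classification, $h=R g^{*}$ for some $R\in G$. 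Setting $S_n=R^{-1}T_n$ yields $S_n f_n\to g^{*}$ in $L^2(\Gamma^2)$, contradicting the lower bound above. Hence the full (original) sequence admits symmetries $\{S_n\}$ with $S_n f_n\to g^{*}$.

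The main obstacle is really packaged into the two ingredients invoked above: Carneiro's uniqueness (so that the limit extremizer $h$ is forced into the $G$-orbit of $g^{*}$) and the isometric action of $G$ on both sides of \eqref{restriction-cone-with-c} (so that passage to $T_n f_n$ preserves the extremizing property and $h$ inherits unit norm). Once these are in hand, the argument is the standard abstract passage from ``precompactness modulo symmetries plus uniqueness of the limit object'' to convergence of the full sequence after a suitable symmetry choice.
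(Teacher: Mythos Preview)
Your proposal is correct and follows essentially the same approach as the paper. The paper packages the abstract step as a separate proposition (choose $S_n$ within $1/n$ of $\inf_{S\in G}\norma{Sf_n-g^*}_2$ and verify convergence via the subsequence criterion) and records the single-orbit statement as an explicit lemma with a direct computation, but the logic---Theorem \ref{main-precompactness} plus Carneiro's classification forcing the limit into a single $G$-orbit, then the standard passage from precompactness modulo $G$ to convergence after symmetries---is the same as yours.
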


Define the function $g$ by its Fourier transform as $\hat g(y)=f(y)\ab{y}^{-1}$. Then
\begin{equation}
 \label{equivalent-form}
e^{it\sqrt{-\Delta}}g(x):=\frac{1}{(2\pi)^2}\int_{\R^2} e^{ix\cdot y}e^{it\ab{y}}\hat g(y)dy=\frac{1}{(2\pi)^2}Tf(x,t),
\end{equation}
and
\[\norma{g}_{\dot H^{\frac{1}{2}}(\R^2)}=\norma{f}_{L^2(\Gamma^2)},\]
where we used the $\dot H^{\frac{1}{2}}(\R^2)$ norm
\[\norma{g}_{\dot H^{\frac{1}{2}}(\R^2)}^2=\int_{\R^2}\ab{\hat g(y)}^2\ab{y}dy.\]
We see that 
\begin{equation}
 \label{equivalence-operator}
 (2\pi)^{-2}\norma{Tf}_{L^6(\R^2)}\norma{f}_{L^2(\Gamma^2)}^{-1}=\norma{e^{it\sqrt{-\Delta}}g}_{L^6(\R^3)}\norma{g}_{\dot
H^{\frac{1}{2}}(\R^2)}^{-1}
\end{equation}
and \eqref{restriction-cone-with-c} is equivalent to
\begin{equation}
 \label{Strichatz-estimate}
\norma{e^{it\sqrt{-\Delta}}g}_{L^6_{x,t}{(\R^3)}}\leq \frac{C}{(2\pi)^2}\norma{g}_{\dot H^{\frac{1}{2}}(\R^2)}.
\end{equation}

From \eqref{equivalence-operator}, $\{f_n\}_{n\in\N}$ is an extremizing sequence for \eqref{restriction-cone-with-c} if and only
if $\{g_n\}_{n\in\N}$, with $\hat g_n(y)=f_n(y)\ab{y}^{-1}$, is an extremizing sequence for \eqref{Strichatz-estimate}.

The problem of computing the best constant in \eqref{restriction-cone-with-c} and the exact form of the extremizers was solved 
by Carneiro in \cite{Ca}. With the normalization of the Fourier transform discussed earlier, Carneiro proves the following:
\begin{teo}\cite{Ca}.
\label{extremizers-cone}
For all $f\in L^2(\Gamma^2)$,
\begin{equation}
 \label{best-constant-cone}
\norma{\widehat{f\sigma}}_{L^6(\R^3)}\leq (2\pi)^{5/6}\norma{f}_{L^2(\Gamma^2)}.
\end{equation}
and equality occurs in \eqref{best-constant-cone} if and only if $f(y,\ab{y})=e^{-a\ab{y}+b\cdot y+c}$, where $a,c\in\mathbb C$, 
$b\in\mathbb C^2$, and $\ab{\Re b}<\Re a$. 
\end{teo}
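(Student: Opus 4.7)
My plan is to follow Foschi's sharp-restriction method as adapted to the cone by Carneiro, reducing the inequality to a single Cauchy--Schwarz together with an explicit computation of $\sigma^{*3}$. By \eqref{convolution-form} it suffices to prove the sharp bound $\norma{(f\sigma)^{*3}}_{L^2(\R^3)} \leq 2\pi\norma{f}_{L^2(\sigma)}^3$, since combining with the prefactor $(2\pi)^{3/2}$ in \eqref{convolution-form} gives $\norma{\widehat{f\sigma}}_6^6 \leq (2\pi)^5 \norma{f}_2^6$.

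Represent the triple convolution via a slice measure: if
\[
\Sigma_\xi = \{(y_1,y_2,y_3)\in(\R^2)^3 : (y_1,\ab{y_1})+(y_2,\ab{y_2})+(y_3,\ab{y_3})=\xi\}
\]
and $\nu_\xi$ is the disintegration of $d\sigma\otimes d\sigma\otimes d\sigma$ along the sum map, then
\[
(f\sigma)^{*3}(\xi) = \int_{\Sigma_\xi} f(y_1)f(y_2)f(y_3)\,d\nu_\xi,
\]
so Cauchy--Schwarz in $\nu_\xi$ gives the pointwise bound
\[
\bigl((f\sigma)^{*3}(\xi)\bigr)^2 \leq (f^2\sigma)^{*3}(\xi)\cdot\sigma^{*3}(\xi).
\]
To evaluate $\sigma^{*3}$, use Lorentz invariance of $\sigma$ (from Section~\ref{lorentz-invariance}) to reduce to the base point $(0,t)$ with $t>0$. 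A polar-coordinate computation gives $(\sigma*\sigma)(0,t) = 2\pi/t$, and a second polar integration gives $\sigma^{*3}(0,t) = 4\pi^2$, independent of $t$; Lorentz invariance then yields $\sigma^{*3}(x,t) = 4\pi^2\,\mathbbm{1}_{\{t>\ab{x}\}}$. Integrating the pointwise bound over $\xi\in\R^3$ and using that $(f^2\sigma)^{*3}$ is supported in the closed forward cone,
\[
\norma{(f\sigma)^{*3}}_{L^2}^2 \leq 4\pi^2\int(f^2\sigma)^{*3}(\xi)\,d\xi = 4\pi^2\Bigl(\int f^2\,d\sigma\Bigr)^3 = 4\pi^2\norma{f}_2^6,
\]
which produces the constant $(2\pi)^{5/6}$.

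For the equality case, Cauchy--Schwarz is tight if and only if $f(y_1)f(y_2)f(y_3)$ is (almost everywhere) constant on each slice $\Sigma_\xi$. Since the sum map is a submersion onto the interior of the forward cone with connected fibers, this forces $\log f(y)$ to be affine in the extended coordinate $(y,\ab{y})$, so $f(y) = e^{-a\ab{y}+b\cdot y+c}$; the constraint $\ab{\Re b} < \Re a$ is precisely the requirement that $f\in L^2(\sigma)$. Conversely, for such $f$ one checks that $f(y_1)f(y_2)f(y_3) = e^{-at+b\cdot x + 3c}$ on $\Sigma_{(x,t)}$, depending only on $\xi$, so Cauchy--Schwarz is attained and equality indeed holds in \eqref{best-constant-cone}.

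The main obstacle will be the equality step: promoting the almost-everywhere multiplicative identity on the fibers $\Sigma_\xi$ to the conclusion that $f$ coincides with the claimed smooth exponential, and extracting the exact integrability condition $\ab{\Re b}<\Re a$. The Cauchy--Schwarz step and the evaluation of $\sigma^{*3}$ are otherwise essentially computational.
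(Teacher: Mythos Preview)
The paper does not actually prove Theorem~\ref{extremizers-cone}; it is quoted from Carneiro~\cite{Ca} and used as a black box in Section~10. So there is no in-paper proof to compare against. That said, your outline is precisely the Foschi--Carneiro method that \cite{Ca} employs, and the inequality portion is correct: the computation $\sigma*\sigma(0,t)=2\pi/t$, $\sigma^{*3}\equiv 4\pi^2$ on the open forward solid cone, and the Cauchy--Schwarz step all check out and deliver the constant $(2\pi)^{5/6}$.

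Two points deserve tightening. First, for complex-valued $f$ the pointwise bound should read $\ab{(f\sigma)^{*3}(\xi)}^2\leq (\ab{f}^2\sigma)^{*3}(\xi)\cdot\sigma^{*3}(\xi)$; writing $f^2$ is only correct for real $f$. This matters for the equality analysis, where you must separately force (i) equality in $\ab{(f\sigma)^{*3}}=(\ab{f}\sigma)^{*3}$, giving a phase condition, and (ii) equality in Cauchy--Schwarz for $\ab{f}$, giving the modulus condition. Your sketch collapses these into a single ``$\log f$ is affine'' statement, but $\log f$ is not globally defined for complex $f$; Carneiro's actual argument handles the functional equation $f(y_1)f(y_2)f(y_3)=F(y_1+y_2+y_3,\ab{y_1}+\ab{y_2}+\ab{y_3})$ more carefully. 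Second, you correctly flag the a.e.\ issue in the equality step as the main obstacle; in \cite{Ca} this is resolved by a density/continuity argument specific to the cone's geometry, which is genuinely the nontrivial part of the characterization and is not just a formality.
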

We will use this result to prove Theorem \ref{better-precompactness}.

Fanelli, Vega and Visciglia proved in \cite{FVV2} a general existence theorem for extremizers of Strichartz inequalities. We 
state here the case of the cone, in its equivalent form via \eqref{equivalent-form}. For $d\geq 2$ and $0\leq
\sigma<(d-1)/2$ the following Strichartz estimates hold (see \cite{FVV2}*{Example 1.1})
\begin{equation}
 \label{Strichartz-cone}
\norma{e^{it\sqrt{-\Delta}}g}_{L^{\frac{2(d+1)}{d-1-2\sigma}}_{t,x}(\R^{d+1})}\leq C\norma{g}_{\dot H^{\frac{1}{2}+\sigma}(\R^d)}.
\end{equation}

In \cite{FVV2}, using ``remodulation'' (equation after \cite{FVV2}*{equation 2.12}) ``rescaling'' and ``translation'' 
(\cite{FVV2}*{equation 2.15}) the following theorem is proved:

\begin{teo}\cite{FVV2}.
\label{from-fvv}
 Let $d\geq 2$ and $0<\sigma<(d-1)/2$. Then there exists an extremizer for \eqref{Strichartz-cone}. Moreover, extremizing 
 sequences are precompact, after the application of symmetries: ``remodulation'', ``rescaling'' and ``translation''.
\end{teo}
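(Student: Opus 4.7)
The plan is to reduce Theorem \ref{from-fvv} to a profile decomposition for the half-wave propagator $e^{it\sqrt{-\Delta}}$ at the sub-endpoint regularity $s=\frac{1}{2}+\sigma$, $0<\sigma<\frac{d-1}{2}$, coupled with the elementary fact that the Strichartz exponent $q=\frac{2(d+1)}{d-1-2\sigma}$ is strictly greater than $2$. The first step is to prove a refined Strichartz inequality that quantifies the heuristic that any function attaining a nontrivial fraction of the best constant must be concentrated at a single scale and location up to a frequency shift; concretely, a bound of the form
\[\norma{e^{it\sqrt{-\Delta}}g}_{L^q_{t,x}(\R^{d+1})}\leq C\norma{g}_{\dot H^{1/2+\sigma}}^{\theta}\norma{g}_{Y}^{1-\theta}\]
for some $\theta\in(0,1)$ and some auxiliary norm $Y$ strictly weaker than $\dot H^{1/2+\sigma}$, typically a Besov-type norm. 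I would establish this via a Whitney decomposition in frequency combined with a bilinear restriction estimate for the cone, a route which is open precisely because $\sigma>0$.

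From such a refinement one extracts, in the now-standard style of Keraani and Bahouri--G\'erard, a profile decomposition: after passing to a subsequence, every bounded sequence $\{g_n\}\subset \dot H^{1/2+\sigma}(\R^d)$ can be written as
\[g_n=\sum_{j=1}^{J}S_n^j\phi_j+w_n^J,\]
where each $S_n^j$ is a composition of the three symmetries rescaling, translation and remodulation; the profiles $\phi_j$ are pairwise asymptotically orthogonal in $\dot H^{1/2+\sigma}$; and $\limsup_{n\to\infty}\norma{e^{it\sqrt{-\Delta}}w_n^J}_{L^q_{t,x}}\to 0$ as $J\to\infty$. Divergence of the extraction parameters yields the two asymptotic orthogonality identities $\norma{g_n}_{\dot H^{1/2+\sigma}}^{2}=\sum_{j}\norma{\phi_j}_{\dot H^{1/2+\sigma}}^{2}+\norma{w_n^J}_{\dot H^{1/2+\sigma}}^{2}+o_{n}(1)$ and $\norma{e^{it\sqrt{-\Delta}}g_n}_{L^q}^{q}=\sum_{j}\norma{e^{it\sqrt{-\Delta}}\phi_j}_{L^q}^{q}+o_{n,J}(1)$. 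Denoting by $\mathbf{K}$ the best constant in \eqref{Strichartz-cone} and taking an extremizing sequence with $\norma{g_n}_{\dot H^{1/2+\sigma}}=1$ and $\norma{e^{it\sqrt{-\Delta}}g_n}_{L^q}\to \mathbf{K}$, combining these identities with the Strichartz inequality applied to each profile yields the chain
\[\mathbf{K}^q=\sum_{j}\norma{e^{it\sqrt{-\Delta}}\phi_j}_{L^q}^{q}\leq \mathbf{K}^q\sum_{j}\norma{\phi_j}_{\dot H^{1/2+\sigma}}^{q}\leq \mathbf{K}^q\Bigl(\textstyle\sum_{j}\norma{\phi_j}_{\dot H^{1/2+\sigma}}^{2}\Bigr)^{q/2}\leq \mathbf{K}^q,\]
where the middle step uses $\sum a_j^{q/2}\leq(\sum a_j)^{q/2}$ for $q/2>1$, with strict inequality unless only one $a_j$ is nonzero. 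Consequently exactly one profile $\phi$ survives, it has $\dot H^{1/2+\sigma}$-norm equal to $1$, it saturates \eqref{Strichartz-cone}, and the remainder vanishes in $\dot H^{1/2+\sigma}$; this is precisely the claimed precompactness modulo symmetries, together with the existence of an extremizer.

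The main obstacle is the refined Strichartz estimate that drives the profile decomposition. For the wave-cone propagator this requires a bilinear restriction estimate for the cone adapted to the sub-endpoint regularity $s=\frac{1}{2}+\sigma$, together with a real-interpolation argument to pass from the bilinear bound to the linear refinement involving the auxiliary weaker norm $Y$. The asymptotic orthogonality of the Strichartz norms along profiles in turn relies on a stationary-phase argument to show that profiles with divergent parameters produce evolutions with asymptotically disjoint space-time cores. Once these two technical inputs are in place, the rest of the proof is the short convexity argument outlined above. The genuine endpoint $\sigma=0$ falls outside the scope of this scheme because the required bilinear refinement degenerates there, and this is exactly the gap that the present paper addresses for the three-dimensional cone via the Christ--Shao method.
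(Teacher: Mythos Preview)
This theorem is not proved in the present paper; it is quoted from \cite{FVV2}. So there is no ``paper's own proof'' to compare against here, only the hints the paper gives about how \cite{FVV2} proceeds.

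Your outline via a Bahouri--G\'erard/Keraani profile decomposition is a legitimate route and would give the result, but it is not the argument of \cite{FVV2} as described in this paper. From the discussion surrounding the theorem (and from Propositions~\ref{cone-hilbert-lemma-l2} and~\ref{from-fvv2}, which reproduce pieces of the \cite{FVV2} machinery), the \cite{FVV2} proof is considerably lighter: it rests on an interpolation inequality (their equation~2.12) of the schematic form
\[
\norma{e^{it\sqrt{-\Delta}}g}_{L^q_{t,x}}\lesssim \norma{e^{it\sqrt{-\Delta}}g}_{L^\infty_t L^r_x}^{\theta}\norma{g}_{\dot H^{1/2+\sigma}}^{1-\theta},
\]
which forces a nontrivial $L^\infty_t L^r_x$ lower bound along any extremizing sequence. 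After applying the symmetries to normalize the time and scale at which this lower bound is attained, one uses weak compactness, Rellich's theorem to upgrade weak convergence to a.e.\ convergence of the evolutions (exactly Proposition~\ref{from-fvv2} here), and then the Br\'ezis--Lieb argument of Proposition~\ref{cone-hilbert-lemma-l2}. No bilinear restriction estimate and no full profile expansion are needed.

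One correction to your final remark: the reason the \cite{FVV2} scheme breaks at $\sigma=0$, $d=2$ is not that a bilinear refinement degenerates. As the paper explains right after the theorem, the obstruction is the Lorentz invariance of the endpoint problem: the mixed-norm lower bound (their equation~2.12) genuinely fails because one can push an extremizer along the Lorentz group and drive $\norma{e^{it\sqrt{-\Delta}}g_n}_{L^\infty_t L^4_x}$ to zero while keeping the $L^6_{t,x}$ norm fixed. Your profile-decomposition approach would face the same difficulty in a different guise: the symmetry group you quotient by (remodulation, rescaling, translation) is too small at the endpoint, so profiles could still escape along Lorentz boosts.
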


We point out here that their method does not apply to the endpoint case studied in this paper, $\sigma=0$ and $d=2$, because of 
the existence of further symmetries, Lorentz invariance, as discussed in Section \ref{lorentz-invariance}. The symmetries 
referred to in Theorem \ref{from-fvv}, when expressed in the dual formulation for $f\in L^2(\Gamma^2)$ are, in respective order:
\begin{itemize}
 \item $f(y)\rightsquigarrow e^{is\ab{y}}f(y)$, $s\in\R$,
 \item $f(y)\rightsquigarrow \la^{1/2}f(\la y)$, $\la>0$, and
 \item $f(y)\rightsquigarrow e^{iy\cdot y_0}f(y)$, $y_0\in\R^2$.
\end{itemize}
From the Lorentz invariance of inequality \eqref{restriction-cone-with-c}, and the fact that the Lorentz group is not generated
modulo a compact subgroup by the elements listed above, it follows that the final conclusion of Theorem \ref{from-fvv} cannot be
true in the endpoint case $d=2,\,\sigma=0$. This indicates that the proof in \cite{FVV2} likewise cannot apply to this endpoint
case.

On the one hand, for $d\geq 2$, under admissibility conditions in $(p,q)$ one has the Strichartz estimates
\[\norma{e^{it\sqrt{-\Delta}}g}_{L_t^pL_x^q(\R^{d+1})}\leq C\norma{g}_{\dot H^{\frac{1}{p}-\frac{1}{q}+\frac{1}{2}}(\R^d)},\]
so that for the case of the $\dot H^{1/2}(\R^d)$ one needs $p=q$ which then makes \cite{FVV2}*{Theorem 1.1} not 
applicable.

On the other hand, at the level of the proof of \cite{FVV2}*{Theorem 1.1}, one sees that \cite{FVV2}*{equation 2.12} does not hold
for $\sigma=0$ (or $s=\frac{1}{2}$ as appears there) and $d=2$. For this we show that there are extremizing sequences
$\{g_n\}_{n\in\N}$
such that $\norma{e^{it\sqrt{-\Delta}}g_n}_{L^\infty_t L_x^4}\to 0$ as $n\to\infty$. 

This is the same as having extremizing sequence $\{f_n\}_{n\in\N}$ such that $\norma{Tf_n}_{L^\infty_t L_x^4}\to 0$ as 
$n\to\infty$. For this we use the Lorentz invariance and the characterization of extremizers for the cone given in Theorem
\ref{extremizers-cone}.

From Section \ref{lorentz-invariance}, $\norma{T(f\circ L)}_{L^6(\R^3)}=\norma{Tf}_{L^6(\R^3)}$, for every Lorentz
transformation $L$ preserving $\Gamma^2$. Let $f$ be an $L^2$-normalized extremizer, say $f(x_1,x_2,x_3)=c_0e^{-x_3}$. We take
a sequence of Lorentz transformations $L^s$ and $f\circ L^s$ is also an $L^2$-normalized extremizer. We now compute
$\norma{T(f\circ L^s)}_{L^\infty_t L_x^4}$. We have
\[Tf(x,t)=\frac{2\pi c_0}{\sqrt{(1-it)^2+\ab{x}^2}}\]
and
\[\ab{Tf(x,t)}^4=\frac{(2\pi)^4c_0^4}{(1-t^2+\ab{x}^2)^2+4t^2}.\]
Now we use $L^s(x,t)=((x_1+st)/(1-s^2)^{1/2},x_2,(t+sx_1)/(1-s^2)^{1/2})$ and note that by making the change of 
variables $u=(x_1+st)(1-s^2)^{-1/2},\,v=x_2$ we obtain
\begin{align*}
 &\int_{\R^2} \ab{(Tf)\circ L^s(x,t)}^4dx=(1-s^2)^{1/2}\int_{\R^2} \ab{Tf(x_1,x_2,sx_1+t(1-s^2)^{1/2})}^4dx.
\end{align*}
Then, if $s\neq 0$
\begin{align*}
 &\sup\limits_{t\in\R}\int_{\R^2} \ab{T(f\circ L^s(x,t))}^4dx
   =(1-s^2)^{1/2}\sup\limits_{t\in\R}\int_{\R^2} \ab{Tf(x_1,x_2,s(x_1+t))}^4dx\\
  &=(2\pi)^4c_0^4(1-s^2)^{1/2}\sup\limits_{t\in\R}\int_{\R^2} \frac{dx_1dx_2}{(1-s^2(x_1+t)^2+x_1^2+x_2^2)^2+4s^2(x_1+t)^2}\\
  &=(2\pi)^4c_0^4(1-s^2)^{1/2}\sup\limits_{t\in\R}\int_{\R^2} \frac{dx_1dx_2}{(1-s^2x_1^2+(x_1+t)^2+x_2^2)^2+4s^2x_1^2}.
\end{align*}
It is not hard to show that for $(s,t)\in[\tfrac{1}{2},1]\times\R$
 \[\int_{\R^2} \frac{dx_1dx_2}{(1-s^2x_1^2+(x_1+t)^2+x_2^2)^2+4s^2x_1^2}\leq C,\]
with $C$ independent of $s$ and $t$. Therefore
\[\sup\limits_{t\in\R}\int_{\R^2} \ab{T(f\circ L^s(x,t))}^4dx\leq C(1-s^2)^{1/2}.\]
Hence $\lim_{s\to 1^-}\norma{T(f\circ L^s)}_{L^\infty_t L_x^4}=0$.

\vspace{0.3cm}
{\bf Notation:} We will write $X\lesssim Y$ or $Y\gtrsim X$ to denote an estimate of the form $X\leq CY$, and  $X\asymp Y$ to
denote an estimate of the form $cY\leq X\leq CY$, where $0<c,C<\infty$ are constants depending on fixed parameters of the problem,
but independent of $X$ and $Y$. We will denote by $\chi_A$ the characteristic function of a set $A$.

When writing integrals, we will sometimes drop the domain of integration or the measure when it is clear from context.

\section{The structure of the paper and the idea of the proof}

The proof of Theorem \ref{main-precompactness} follows the lines of the proof of precompactness of extremizing sequences for the 
adjoint Fourier restriction operator on the sphere $S^2\subset \R^3$ given in \cite{CS}.

In Section \ref{adjoint-fourier}, we give a known \cite{Str}, \cite{So}*{Chapter 2} proof of Theorem
\ref{adjoint-fourier-restriction}, with a view towards a refinement in terms of a cap space, as used in \cite{CS} and proved in
\cite{FVV} for compact surfaces in $\R^3$ of nonvanishing Gaussian curvature. In Section \ref{preliminaries}, we obtain bounds
that
we will use in Section \ref{section-cap-bound} to obtain the following cap estimate:
\begin{equation}
 \label{cap-estimate-intro}
 \norma{Tf}_{L^6(\R^3)}\lesssim \norma{f}_{L^2(\Gamma^2)}^{1-\gamma/2}
 \Bigl(\sup\limits_{\cp} \ab{\cp}^{-1/4}\int_{\cp}\ab{f}^{3/2}d\sigma\Bigr)^{\gamma/3},
\end{equation}
where the supremum ranges over all ``caps'' $\cp\subset\Gamma^2$ and $\gamma>0$ is a small universal constant. This is the 
analog of \cite{CS}*{Lemma 6.1}.

For a function satisfying $\norma{Tf}_{L^6(\R^3)}\geq \delta\mathbf{C}\norma{f}_{L^2(\Gamma^2)}$, the estimate in 
\eqref{cap-estimate-intro} allows the extraction of a cap $\cp$ with good properties: $f$ can be decomposed as the sum of two
functions with disjoint supports $f=g+h$ and $g$, which is comparable to $f\chi_\cp$, satisfies
\[\ab{g(x)}\leq C_\delta\norma{f}_2\ab{\cp}^{-1/2}\chi_{\cp}(x)\quad\text{ and }\quad\norma{g}_2\geq
\eta_\delta\norma{f}_2.\]

This is the content of Section \ref{using-cap-bound}. In Section \ref{lorentz-invariance}, we discuss symmetries of the cone. 
This includes dilations and Lorentz transformations and they allow us to take a cap $\cp$ and transform it into a cap $\cp'$ with
better properties: $\cp'$ is contained in a bounded region, independent of the extremizing sequence, and has big measure. 

The existence of symmetries of $(\Gamma^2,\sigma)$ simplifies the argument, compared to \cite{CS}. Two ways are possible, use the
arguments of Fanelli, Vega and Visciglia contained in \cite{FVV,FVV2} carried out in Section \ref{fvv-argument};
or use
a decomposition algorithm as done by Christ and Shao and carried out at the end of Section \ref{fvv-argument}.

For the argument based on \cite{FVV,FVV2}, a single extraction of a cap and the use of symmetries is enough to prove 
precompactness. In the case of the argument based on \cite{CS}, a cap decomposition is needed. For an extremizing sequence, the
cap decomposition is used to show that after dilations and Lorentz transformations, the extremizing sequence has a uniform
$L^2$-decay at infinity. The uniform decay plus a result inspired from \cite{FVV} allow us to complete the proof of
precompactness.

In the last section, we prove that extremizing sequences converge, after the application of symmetries of the cone. This is an 
easy task, that follows from the fact that the extremizers for \eqref{restriction-cone-with-c} are known and that the group of
symmetries of the cone acts transitively in the set of extremizers.

The author became recently aware that Ramos \cite{Ramos} proved a similar result to the cap estimate \eqref{cap-estimate-intro},
for all $d\geq 2$, using a different method that relies on bilinear estimates for the cone as proved by Wolff and Tao. He also
proved the existence of extremals for the Strichartz inequalities for the wave equation in the endpoint case.

\section{The adjoint Fourier restriction inequality}\label{adjoint-fourier}

Abusing notation, we will write $f(r,\te)=f(x)$, where $x=(r\cos\te,r\sin\te)$, which is the polar representation of $x$. Note 
that in polar coordinates the measure $\ab{y}^{-1}dy$ becomes $dr\,d\te$.

In the proof of Theorem \ref{adjoint-fourier-restriction}, we will need the following standard lemma.
\begin{lemma}[Fractional integration]
 \label{fractional-integration}
Let $p,q\in(1,\infty)$. Then for any $g\in L^p(\R),\, h\in L^q(\R)$ the following holds:
\[\int_\R\int_\R \ab{g(s)h(t)}\ab{t-s}^{-\alpha}dsdt\leq C_{p,q}\norma{g}_{L^p}\norma{h}_{L^q},\]
where $\alpha=2-1/p-1/q$ and $1/p+1/q>1$.
\end{lemma}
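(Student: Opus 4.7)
The statement is the one-dimensional Hardy--Littlewood--Sobolev inequality, so my plan is to follow the standard maximal-function proof. First I would observe that, by duality, the inequality is equivalent to the boundedness of the fractional integration operator
\[
I_\alpha g(t)=\int_\R |t-s|^{-\alpha} g(s)\,ds
\]
from $L^p(\R)$ to $L^{q'}(\R)$: the left-hand side rewrites as $\int h(t)\,I_\alpha g(t)\,dt$, and H\"older's inequality reduces the bilinear bound to $\norma{I_\alpha g}_{q'}\leq C \norma{g}_p$. The scaling condition $\alpha=2-1/p-1/q$ is forced by homogeneity, and the assumption $1/p+1/q>1$ is equivalent to $\alpha<1$, which I will need for local integrability of the kernel.

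Next I would derive a pointwise bound on $I_\alpha g(t)$ by splitting the integral at an auxiliary scale $R>0$. For the near part $|t-s|\leq R$, I would use a dyadic decomposition into annuli $\{2^{-j-1}R<|t-s|\leq 2^{-j}R\}$, bounding each one by $(2^{-j}R)^{-\alpha}$ times $2\cdot 2^{-j}R\cdot Mg(t)$, where $M$ is the Hardy--Littlewood maximal operator; summing the geometric series in $j\geq 0$ converges precisely because $1-\alpha>0$, giving a contribution $\lesssim R^{1-\alpha}Mg(t)$. For the far part $|t-s|>R$, I would apply H\"older with conjugate exponents $p'$ and $p$; the tail integral $\int_{|t-s|>R}|t-s|^{-\alpha p'}ds$ is finite provided $\alpha p'>1$, and this follows from $1/p+1/q>1$ combined with $q>1$. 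This yields a contribution $\lesssim R^{1/p'-\alpha}\norma{g}_p$.

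I would then optimize in $R$, setting $R^{1-\alpha}Mg(t)=R^{1/p'-\alpha}\norma{g}_p$, i.e.\ $R=(\norma{g}_p/Mg(t))^p$. Substituting gives a pointwise estimate of the form
\[
I_\alpha g(t)\lesssim \norma{g}_p^{\theta}\,(Mg(t))^{1-\theta}
\]
for a $\theta\in(0,1)$ forced by scaling. Raising to the power $q'$, integrating in $t$, and applying the Hardy--Littlewood maximal inequality $\norma{Mg}_p\leq C_p\norma{g}_p$ (valid since $p>1$) produces $\norma{I_\alpha g}_{q'}\leq C_{p,q}\norma{g}_p$.

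The proof is entirely standard; the only point requiring care is bookkeeping of exponents, namely checking that the optimization exponent $\theta$ makes $Mg$ appear raised to exactly the power $p$ after integration, so that the $L^p$ maximal inequality applies. One verifies that $q'(1-p+p\alpha)=p$ using $\alpha=2-1/p-1/q$, which closes the argument. An alternative I could invoke instead is Young's convolution inequality in weak-type spaces: since $|t|^{-\alpha}\in L^{1/\alpha,\infty}(\R)$ and $1/p+\alpha=1+1/q'$, one obtains $\norma{g*|\cdot|^{-\alpha}}_{q'}\lesssim\norma{g}_p\norma{|t|^{-\alpha}}_{L^{1/\alpha,\infty}}$ directly, but I prefer the maximal-function route since it is self-contained.
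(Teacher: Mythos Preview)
Your proof is correct; it is the classical Hedberg maximal-function argument for the Hardy--Littlewood--Sobolev inequality, and your exponent bookkeeping checks out (in particular $q'(1-p+p\alpha)=p$ is exactly the identity needed). The paper itself does not prove this lemma at all: it is stated as a ``standard lemma'' and used as a black box, so there is no approach to compare against. Your write-up therefore supplies strictly more than the paper does here.
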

From Lemma \ref{fractional-integration} we have the following:
\begin{lemma}
 \label{fractional-integration-circle}
Let $p,q\in(1,\infty)$ with $1/p+1/q>1$ and let $\alpha=2-1/p-1/q$. Then for any 
$g\in L^p([0,2\pi]),\, h\in L^q([0,2\pi])$ the following holds:
\begin{equation}
 \label{bound-fractional-integration-circle}
\int_0^{2\pi}\int_0^{2\pi} \ab{g(s)h(t)}\ab{\sin(t-s)}^{-\alpha}dsdt\leq C_{p,q}\norma{g}_{L^p}\norma{h}_{L^q}.
\end{equation}
\end{lemma}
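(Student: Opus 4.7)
The idea is to reduce this periodic estimate to the Euclidean Hardy--Littlewood--Sobolev inequality stated in Lemma~\ref{fractional-integration}. The only obstruction to doing so directly is that $\sin(t-s)$ vanishes whenever $t-s\in\pi\Z$, not just at $t=s$. However, for $(s,t)\in[0,2\pi]^{2}$ we have $t-s\in[-2\pi,2\pi]$, so the zeros of $\sin(t-s)$ inside the domain lie in the finite set $\{-2\pi,-\pi,0,\pi,2\pi\}$; and on a neighborhood of each such point, $|\sin(t-s)|$ is comparable to $|t-s-k\pi|$.

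The plan is therefore to cover $[-2\pi,2\pi]$ by five intervals $J_{k}=[k\pi-\pi/2,k\pi+\pi/2]$, $k\in\{-2,-1,0,1,2\}$, on each of which $|\sin u|\asymp |u-k\pi|$, and to localize the double integral accordingly:
\[
\int_{0}^{2\pi}\int_{0}^{2\pi}\ab{g(s)h(t)}\ab{\sin(t-s)}^{-\alpha}ds\,dt
\lesssim \sum_{k=-2}^{2}\int_{0}^{2\pi}\int_{0}^{2\pi}\ab{g(s)h(t)}\ab{t-s-k\pi}^{-\alpha}ds\,dt.
\]
For each $k$ I would then perform the change of variables $\tau=t-k\pi$, treating $h$ as extended by zero outside $[0,2\pi]$, so that $h(\tau+k\pi)$ is an $L^{q}(\R)$ function compactly supported in $[-k\pi,2\pi-k\pi]$ with the same $L^{q}$-norm as $h$. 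The $k$-th piece becomes
\[
\int_{\R}\int_{\R}\ab{g(s)\,h(\tau+k\pi)}\ab{\tau-s}^{-\alpha}ds\,d\tau,
\]
to which Lemma~\ref{fractional-integration} applies directly, since the hypotheses $1<p,q<\infty$, $\tfrac1p+\tfrac1q>1$, $\alpha=2-\tfrac1p-\tfrac1q$ are exactly the ones assumed here. The resulting bound is $C_{p,q}\norma{g}_{L^{p}([0,2\pi])}\norma{h}_{L^{q}([0,2\pi])}$, uniformly in $k$, and summing the five contributions yields \eqref{bound-fractional-integration-circle}.

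There is no real obstacle here: the only point requiring a moment's thought is to check that the comparability $|\sin u|\asymp |u-k\pi|$ is uniform on each $J_{k}$ with constants independent of $k$ (which is clear from the Taylor expansion of $\sin$ near $0$ together with its $\pi$-antiperiodicity), and to make sure the five translations are accommodated by extending the factor being translated by zero to all of $\R$ before invoking Lemma~\ref{fractional-integration}.
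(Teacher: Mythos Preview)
Your argument is correct and follows the same strategy as the paper's proof: localize near the zeros of $\sin$ and reduce to the Euclidean Hardy--Littlewood--Sobolev inequality (Lemma~\ref{fractional-integration}) via translation. The only difference is organizational: the paper partitions the square $[0,2\pi]^{2}$ into sixteen subsquares (with some further subdivision) and handles each by either $|\sin(t-s)|\asymp|t-s|$, the substitution $t'=t-\pi$, or a trivial H\"older bound where $|\sin(t-s)|\geq 1/\sqrt{2}$, whereas you partition according to the value of $t-s$ into five strips $\{t-s\in J_{k}\}$ and treat all of them uniformly by a single translation; your packaging is somewhat cleaner but the content is the same.
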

\begin{proof}
 We split the integral in sixteen pieces according to $[0,2\pi]=[0,\pi/2]\cup[\pi/2,\pi]\cup[\pi,3\pi/2]\cup[3\pi/2,2\pi]$, 
 and then it will be enough to show that
\[\int_{m\pi/2}^{(m+1)\pi/2}\int_{n\pi/2}^{(n+1)\pi/2} \ab{g(s)h(t)}\ab{\sin(t-s)}^{-\alpha}dsdt\leq 
  C_{p,q}\norma{g}_{L^p}\norma{h}_{L^q},\]
for all $m,n\in\{0,1,2,3\}$.
For this we use a simple change of variable that allows us to use Lemma \ref{fractional-integration}.

If $t,s\in[j\pi/2,(j+1)\pi/2]$, for some $j\in\{0,1,2,3\}$, then $\ab{t-s}\leq \pi/2$ and we use that $(2/\pi)\ab{t-s}\leq 
\ab{\sin(t-s)}\leq \ab{t-s}$.

If $s\in [0,\pi/2]$ and $t\in[\pi,3\pi/2]$ we can use the change of variables $t'=t-\pi$ so that $t'\in[0,\pi/2]$. We note that 
$\ab{\sin(t-s)}=\ab{\sin(t'-s)}$.

If $s\in[0,\pi/2]$ and $t\in[\pi/2,\pi]$ we further split the intervals as $[0,\pi/2]=[0,\pi/4]\cup[\pi/4,\pi/2]$ and 
$[\pi/2,\pi]=[\pi/2,3\pi/4]\cup[3\pi/4,\pi]$. If $s\in[0,\pi/4]$ and $t\in[\pi/2,3\pi/4]$ or if $s\in [\pi/4,\pi/2]$ and
$t\in[3\pi/4,\pi]$, then $\ab{\sin(t-s)}\geq 1/\sqrt{2}$ and the desired inequality follows from an application of H\"older's
inequality. If $s\in[\pi/4,\pi/2]$ and $t\in[\pi/2,3\pi/4]$, then $\ab{t-s}\leq \pi/2$ and we can use the inequality
$(2/\pi)\ab{t-s}\leq \ab{\sin(t-s)}\leq \ab{t-s}$ as in the first case discussed. Finally, if $s\in [0,\pi/4]$ and
$t\in[3\pi/4,\pi]$, we use the substitution $t'=t-\pi$ so that $t'\in[-\pi/4,0]$. Since $\ab{\sin(t-s)}=\ab{\sin(t'-s)}$ and
$\ab{t'-s}\leq \pi/2$ we can conclude as before.

The other cases follow in the same way.
\end{proof}

\begin{proof}[Proof of Theorem \ref{adjoint-fourier-restriction}]
We split $f(y)=\sum_{k\in\Z}^{}f_k(y)$ where $f_k(y)=f(y)\chi_{\{2^{k-1}\leq \ab{y}< 2^{k}\}}$. Then
\[(Tf)^2(x,t)=\suma{k,k'\in\Z}{}Tf_k\cdot Tf_{k'}.\]
Taking $L^3$ norm in both sides, using the triangle inequality and Lemma \ref{bilinear-estimate}, we obtain
\[\norma{Tf}^2_{L^6}\leq C\suma{k,k'}{}2^{-\ab{k-k'}/6}\norma{f_k}_{L^2(\sigma)}\norma{f_{k'}}_{L^2(\sigma)}.\]
To conclude, we use the Cauchy-Schwarz inequality
\begin{align*}
 \norma{Tf}^2_{L^6}&\leq C\biggl(\suma{k,k'}{}2^{-\ab{k-k'}/6}\norma{f_k}^2_{L^2(\sigma)}\biggr)^{1/2}
 \biggl(\suma{k,k'}{}2^{-\ab{k-k'}/6}\norma{f_{k'}}^2_{L^2(\sigma)}\biggr)^{1/2}\leq C \norma{f}^2_{L^2(\sigma)}.\qedhere
\end{align*}
\end{proof}

\begin{lemma}
\label{bilinear-estimate}
 There exists a constant $C<\infty$ with the following property. Let $k,k'\in\Z$ and $f,g\in L^2(\Gamma^2)$ with $f$ and $g$
supported in the regions $2^{k-1}\leq \ab{y}< 2^k$ and  $2^{k'-1}\leq \ab{y}< 2^{k'}$ respectively, then
\begin{equation}
\label{bilinear-estimate-bound}
 \norma{Tf\cdot Tg}_{L^3(\R^3)}\leq C 2^{-\ab{k-k'}/6}\norma{f}_{L^2(\Gamma^2)}\norma{g}_{L^2(\Gamma^2)}.
\end{equation}
\end{lemma}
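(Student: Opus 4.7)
The plan is to reduce the bilinear estimate to a geometric analysis of the cone-measure convolution via the Plancherel identity. First, using the dilation symmetry of the cone, namely the scaling $f(y)\mapsto\la^{1/2}f(\la y)$ that preserves $\norma{f}_{L^2(\Gamma^2)}$ and leaves $\norma{Tf\cdot Tg}_{L^3(\R^3)}$ invariant, I would reduce to the case $k=0$ and $k'=m\geq 0$; so $f$ is supported in $\{|y|\in[1/2,1)\}$, $g$ in $\{|y|\in[2^{m-1},2^m)\}$, and the target bound becomes $\norma{Tf\cdot Tg}_{L^3}\lesssim 2^{-m/6}\norma{f}_2\norma{g}_2$.

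Next I would pass to the convolution side. Since $Tf\cdot Tg=\widehat{f\sigma*g\sigma}(-\,\cdot\,)$, Plancherel and Hausdorff--Young transform $L^p$ bounds on $Tf\cdot Tg$ into $L^{p'}$ bounds on $f\sigma*g\sigma$. The pointwise Cauchy--Schwarz inequality
\[|(f\sigma*g\sigma)(\xi)|^2\leq (|f|^2\sigma*|g|^2\sigma)(\xi)\cdot(\sigma_0*\sigma_m)(\xi),\]
where $\sigma_j$ denotes $\sigma$ restricted to the $j$-th dyadic annulus, reduces matters to estimating the cone-measure convolution $\sigma_0*\sigma_m$. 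Introducing polar coordinates $y=r\hat\te$ and resolving the delta enforcing $\xi_3=|y|+|\xi'-y|$ (which cuts out the ellipse with foci $0$ and $\xi'=(\xi_1,\xi_2)$), one obtains
\[(\sigma_0*\sigma_m)(\xi)=\int\frac{d\te}{\xi_3-\xi'\cdot\hat\te},\]
integrated over the $\te$'s compatible with both annular restrictions. An angular analysis then yields the pointwise bound $(\sigma_0*\sigma_m)(\xi)\asymp 2^{-m}\delta^{-1/2}$ where $\delta=\xi_3-|\xi'|$.

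The main obstacle is that this density is singular on the light cone $\{\delta=0\}$, so $\sigma_0*\sigma_m\notin L^\infty(\R^3)$ and a naive $L^2$ bilinear estimate fails. I would handle this by decomposing the frequency variable into dyadic slabs $\delta\sim 2^{-j}$; on each slab, the pointwise bound controls $(\sigma_0*\sigma_m)|_{\{\delta\sim 2^{-j}\}}$ in $L^\infty$, which together with the Cauchy--Schwarz step above yields a restricted $L^2$ bilinear bound with explicit dependence on $m$ and $j$. Applying Bernstein's inequality on the resulting frequency-localized pieces and interpolating with the trivial bound $\norma{Tf\cdot Tg}_{L^3}\lesssim\norma{f}_2\norma{g}_2$ coming from Theorem \ref{adjoint-fourier-restriction} and H\"older's inequality produces an $L^3$ estimate per slab. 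The delicate point is making the sum over $j$ converge: this requires exploiting almost-orthogonality between the $\delta$-slabs on the Fourier side and, if necessary, a supplementary Whitney-type angular decomposition of the two annuli so that transversal pairs of caps give a sharper bilinear $L^2$ bound capable of absorbing the small-$\delta$ singularity, leaving the clean decay factor $2^{-m/6}$.
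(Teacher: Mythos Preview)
Your approach is substantially different from the paper's, and as written it has real gaps.

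The paper's argument is direct and elementary. One writes $Tf\cdot Tg$ as a four-fold integral in polar coordinates $(r,\te,r',\te')$, makes the change of variables $(r,r',\te,\te')\mapsto(u,s,\varrho)=(r\cos\te+r'\cos\te',\,r\sin\te+r'\sin\te',\,r+r',\,r)$ with Jacobian $J^{-1}=rr'\sin(\te-\te')$, applies Minkowski in the free variable $\varrho$ followed by Hausdorff--Young in $(u,s)$, and arrives at
\[
\norma{Tf\cdot Tg}_{L^3}\lesssim (2^k2^{k'})^{-1/3}\min(2^{k},2^{k'})^{1/3}\Bigl(\int\ab{fg}^{3/2}\ab{\sin(\te-\te')}^{-1/2}\,d\te d\te'drdr'\Bigr)^{2/3}.
\]
The angular singularity is then dispatched by the fractional-integration Lemma~\ref{fractional-integration-circle}, and H\"older in $r,r'$ over the dyadic supports produces exactly the factor $2^{-\ab{k-k'}/6}$. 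No slab decomposition, Bernstein, interpolation, or Whitney scheme is needed.

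Your route via the convolution $\sigma_0*\sigma_m$ correctly identifies the enemy, and your pointwise bound $\sigma_0*\sigma_m(\xi)\asymp 2^{-m}\delta^{-1/2}$ is right (the annular restriction on $\ab{y}$ cuts the admissible angle to $\asymp\delta^{1/2}$, which is why you get $2^{-m}$ rather than $2^{-m/2}$). But two concrete problems remain. First, you invoke Theorem~\ref{adjoint-fourier-restriction} as one endpoint of an interpolation; in the paper's logical order that theorem is \emph{proved from} Lemma~\ref{bilinear-estimate}, so this is circular. Second, and more seriously, the slab sum does not converge with the bounds you have. On $\{\delta\sim 2^{-j}\}$ your Cauchy--Schwarz step gives $\norma{(f\sigma*g\sigma)\chi_{\{\delta\sim 2^{-j}\}}}_{L^2}\lesssim 2^{-m/2+j/4}\norma{f}_2\norma{g}_2$; the slab has measure $\asymp 2^{2m-j}$, so whether via H\"older on the convolution side or Bernstein on the physical side one obtains
\[
\norma{(f\sigma*g\sigma)\chi_{\{\delta\sim 2^{-j}\}}}_{L^{3/2}}\lesssim 2^{-m/6}\,2^{j/12}\norma{f}_2\norma{g}_2,
\]
and $\sum_{j\geq 0}2^{j/12}$ diverges. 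The ``almost-orthogonality'' you appeal to buys nothing in $L^{3/2}$, and the trivial $L^3$ endpoint carries no $j$-decay, so interpolation does not close the gap either. A Whitney-type angular decomposition can indeed cure this, but it is a different argument with its own bookkeeping, and you have not supplied it. As it stands, the proposal is a plan that locates the light-cone singularity of $\sigma_0*\sigma_m$ but does not defeat it.
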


\begin{proof}
We can split 
\[f(r,\te)g(r',\te')=f(r,\te)g(r',\te')(\chi_{r>r'}+\chi_{r<r'})(\chi_{\te>\te'}+\chi_{\te<\te'})\text{ for a.e }
  (r,r',\te,\te').\]
Thus, by the triangle inequality we can assume, without loss of generality, that $\te>\te'$ and $r<r'$ in the support of 
$f(r,\te)g(r',\te')$.

Using polar coordinates and Fubini's Theorem, we have
\begin{align*}
 Tf\cdot Tg(x,t)&=\int_{\R^2}\int_{\R^2}e^{ix\cdot (y+y')}e^{it(\ab{y}+\ab{y'})}f(y)g(y')\ab{y}^{-1}\ab{y'}^{-1}\;dydy'\\
&=\int e^{ix\cdot (r\cos\te+r'\cos\te',r\sin\te +r'\sin\te')}e^{it(r+r')}f(r,\te)g(r',\te')\;d\te d\te' drdr'.
\end{align*}

We make the following change of variables:
\[(r,r',\te,\te')\mapsto(u,s,\varrho)=(r\cos\te+r'\cos\te',r\sin\te +r'\sin\te',r+r',r),\]
which is injective in the region where $\te>\te'$, $r<r'$.
The Jacobian of the transformation is 
\[J^{-1}=\frac{\partial(u,s,\varrho)}{\partial(r,r',\te,\te')}=rr'\sin(\te-\te').\]

Using the change of variables

\[Tf\cdot Tg(x,t)=\int_0^\infty\Bigl(\int e^{ix\cdot u}e^{its}f(y)g(y')J duds\Bigr)d\varrho,\]
and by Minkowski's inequality and Hausdorff-Young inequality,
\begin{align*}
 \norma{Tf\cdot Tg}_{L^3}&\leq\int_0^\infty \Norma{\int e^{ix\cdot u}e^{its}f(y)g(y')J duds}_{L^3}d\varrho\\
&\leq C\int_0^\infty \Bigl(\int \ab{f(y)g(y')J}^{3/2} duds\Bigr)^{2/3}d\varrho\\
&=C\int_0^\infty\Bigl(\int\ab{f(y)g(y')}^{3/2}(rr')^{-\frac{1}{2}}\ab{\sin(\te-\te')}^{-\frac{1}{2}}Jduds\Bigr)^{2/3}d\varrho.
\end{align*}
We now use that $r\asymp 2^{k}$, $r'\asymp 2^{k'}$ and H\"older's inequality to obtain
\begin{align}
 \norma{Tf\cdot Tg}_{L^3}&\leq C (2^k 2^{k'})^{-1/3}(2^{k})^{1/3}\Bigl(\int
\ab{f(y)g(y')}^{3/2}\ab{\sin(\te-\te')}^{-\frac{1}{2}}Jdudsd\varrho\Bigr)^{2/3}\nonumber\\
 \label{bilinear-with-sine}
&=C (2^k 2^{k'})^{-1/3}(2^{k})^{1/3}\Bigl(\int \ab{f(y)g(y')}^{3/2}\ab{\sin(\te-\te')}^{-\frac{1}{2}}d\te d\te'drdr'\Bigr)^{2/3}.
\end{align}
On the other hand, by Lemma \ref{fractional-integration-circle},
\begin{align*}
 \int &\ab{f(y)g(y')}^{3/2}\ab{\sin(\te-\te')}^{-\frac{1}{2}}d\te d\te'drdr' \\
&\leq C \int \Bigl(\int \ab{f(r,\te)}^2 d\te\Bigr)^{3/4}dr\cdot \int \Bigl(\int \ab{g(r,\te')}^2 d\te'\Bigr)^{3/4}dr'\\
&\leq C (2^k2^{k'})^{1/4}\Bigl(\int \ab{f(r,\te)}^2 drd\te\Bigr)^{3/4}\Bigl(\int \ab{g(r',\te')}^2dr'd\te'\Bigr)^{3/4}.
\end{align*}
Then, as $2^k\leq 2^{k'}$
\begin{align*}
 \norma{Tf\cdot Tg}_{L^3}&\leq C(2^k
2^{k'})^{-1/3}\min((2^{k})^{1/3},(2^{k'})^{1/3})(2^k2^{k'})^{1/6}\norma{f}_{L^2_{r,\te}}\norma{g}_{L^2_{r,\te}}\\
&=C 2^{-(k+k')/6}\min((2^{k})^{1/3},(2^{k'})^{1/3})\norma{f}_{L^2(\sigma)}\norma{g}_{L^2(\sigma)}.
\end{align*}

We note that $2^{-(k+k')/6}\min((2^{k})^{1/3},(2^{k'})^{1/3})=2^{-\ab{k-k'}/6}$, so
\[\norma{Tf\cdot Tg}_{L^3}\leq C2^{-\ab{k-k'}/6}\norma{f}_{L^2(\sigma)}\norma{g}_{L^2(\sigma)}.\qedhere\]
\end{proof}

\begin{prop}
\label{improved-exponent}
 There exists a constant $C<\infty$ with the following property. Let $f\in L^2(\Gamma^2)$ and for $k\in \Z$ let
$f_k(y)=f(y)\chi_{\{2^{k-1}\leq \ab{y}<2^k\}}$. Then
 \[\norma{Tf}_{L^6(\R^3)}\leq C\Bigl(\suma{k\in\Z}{}\norma{f_k}_{L^2(\Gamma^2)}^3\Bigr)^{1/3}.\]
\end{prop}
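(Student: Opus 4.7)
My plan is to exploit the identity $\norma{Tf}_{L^6(\R^3)}^3=\norma{(Tf)^3}_{L^2(\R^3)}$ in order to reduce the desired estimate to a trilinear sum controlled by Lemma \ref{bilinear-estimate}. After expanding $(Tf)^3=\sum_{k_1,k_2,k_3\in\Z}Tf_{k_1}Tf_{k_2}Tf_{k_3}$ and applying the triangle inequality in $L^2$, the task reduces to estimating each summand and summing the resulting bounds over $\Z^3$.

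For a single triple I will bound $\norma{Tf_{k_1}Tf_{k_2}Tf_{k_3}}_{L^2}$ in three different ways using H\"older's inequality in the factorization $L^2\supset L^6\cdot L^3$: place one factor $Tf_{k_l}$ in $L^6$ and the pair $Tf_{k_i}Tf_{k_j}$ in $L^3$, for each choice $\{i,j,l\}=\{1,2,3\}$. Lemma \ref{bilinear-estimate} applied with $k=k'=k_l$ yields $\norma{Tf_{k_l}}_{L^6}\lesssim \norma{f_{k_l}}_{L^2}$, while the bilinear bound itself gives $\norma{Tf_{k_i}Tf_{k_j}}_{L^3}\lesssim 2^{-|k_i-k_j|/6}\norma{f_{k_i}}_{L^2}\norma{f_{k_j}}_{L^2}$. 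Taking the geometric mean of the three resulting inequalities and noting $|k_1-k_2|+|k_2-k_3|+|k_1-k_3|=2(k_{\max}-k_{\min})$ produces
\[
\norma{Tf_{k_1}Tf_{k_2}Tf_{k_3}}_{L^2}\lesssim 2^{-(k_{\max}-k_{\min})/9}\norma{f_{k_1}}_{L^2}\norma{f_{k_2}}_{L^2}\norma{f_{k_3}}_{L^2}.
\]

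The main obstacle I expect to meet is then upgrading this pointwise trilinear bound to the desired $\ell^3$ sum rather than the $\ell^2$-type quantity $(\sum_k\norma{f_k}_{L^2}^2)^{3/2}$ that the naive Cauchy--Schwarz argument based on Lemma \ref{bilinear-estimate} alone produces. My plan is to restrict by symmetry to $k_1\leq k_2\leq k_3$ (incurring an overall factor of $6$), apply the elementary AM-GM inequality $a_{k_1}a_{k_2}a_{k_3}\leq \tfrac{1}{3}(a_{k_1}^3+a_{k_2}^3+a_{k_3}^3)$ with $a_k=\norma{f_k}_{L^2}$, and in each of the three resulting sums change variables to $u=k_2-k_1$ and $v=k_3-k_2$. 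Each sum then collapses to $\sum_k a_k^3$ times a convergent double geometric series in $u,v\geq 0$, which yields $\norma{(Tf)^3}_{L^2}\lesssim \sum_k\norma{f_k}_{L^2}^3$ and hence the proposition after taking cube roots.
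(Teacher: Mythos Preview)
Your proof is correct and follows essentially the same approach as the paper: both rewrite $\norma{Tf}_{L^6}^3$ as a trilinear $L^2$ norm, expand and apply the triangle inequality, use H\"older together with Lemma~\ref{bilinear-estimate} to obtain a bound of the form $2^{-(|k_1-k_2|+|k_2-k_3|+|k_1-k_3|)/18}\norma{f_{k_1}}_2\norma{f_{k_2}}_2\norma{f_{k_3}}_2$, and then sum via AM--GM/H\"older against a convergent geometric series. The only cosmetic difference is that the paper reaches the exponent $1/18$ by applying a single H\"older split with the maximally separated pair and then using $\max\geq$ average, whereas you take the geometric mean of all three splits; the resulting bound and the summation argument are identical.
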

\begin{proof}
 By rewriting $\norma{Tf}_{L^6(\R^3)}^3$ as the $L^2$ norm of a trilinear form and using the triangle inequality, we have
 \begin{align*}
  \norma{Tf}_{L^6}^3&=\norma{Tf\cdot Tf\cdot Tf}_{L^2}=\biggl\Vert\suma{i,j,k}{}Tf_i\cdot Tf_j\cdot Tf_k\biggr\Vert_{L^2}\leq
\suma{i,j,k}{}\norma{Tf_i\cdot Tf_j\cdot Tf_k}_{L^2}.
 \end{align*}
 Now for each $i,j$ and $k$, without loss of generality, we can assume that $\ab{j-k}=\max(\ab{i'-j'}:i',j'\in\{i,j,k\})$. Using 
 H\"older's inequality, Theorem \ref{adjoint-fourier-restriction} and Lemma \ref{bilinear-estimate}, we obtain
 \begin{equation}
 \label{ijk}
 \norma{Tf_i\cdot Tf_j\cdot Tf_k}_{L^2}\leq \norma{Tf_i}_{L^6}\norma{Tf_j\cdot Tf_k}_{L^3}\leq 
        C2^{-\ab{j-k}/6}\norma{f_i}_{L^2}\norma{f_j}_{L^2}\norma{f_k}_{L^2}.
 \end{equation}
 Now, using the maximality of $\ab{j-k}$ we see that $\ab{j-k}\geq \frac{1}{3}\ab{i-j}+\frac{1}{3}\ab{j-k}+\frac{1}{3}\ab{k-i}$, 
 and hence from \eqref{ijk},
 \[\norma{Tf_i\cdot Tf_j\cdot Tf_k}_{L^2}\leq 2^{-\ab{i-j}/18}2^{-\ab{j-k}/18}2^{-\ab{k-i}/18}
   \norma{f_i}_{L^2}\norma{f_j}_{L^2}\norma{f_k}_{L^2}.\]
 Then
 \[\norma{Tf}_{L^6}^3\leq C\suma{i,j,k}{}2^{-\ab{i-j}/18}2^{-\ab{j-k}/18}2^{-\ab{k-i}/18}\norma{f_i}_{L^2}\norma{f_j}_{L^2}
   \norma{f_k}_{L^2},\]
 and a final application of H\"older's inequality gives the desired conclusion
 \begin{align*}
  \norma{Tf}_{L^6}^3&\leq C\suma{i,j,k}{}2^{-\ab{i-j}/18}2^{-\ab{j-k}/18}2^{-\ab{k-i}/18}\norma{f_k}_{L^2}^3\leq 
   C\suma{k\in\Z}{}\norma{f_k}_{L^2}^3.\qedhere
 \end{align*}
\end{proof}

\section{Preliminaries for the cap bound for the adjoint Fourier operator}\label{preliminaries}

Recall that in the computation of $\norma{(Tf)^2}_{L^3}$, in equation \eqref{bilinear-with-sine} with $g=f$, we came across 
the expression
\[\int \ab{f(r,\te)f(r',\te')}^{3/2}\ab{\sin(\te-\te')}^{-1/2}d\te d\te' drdr'.\]

By assuming the angular support of $f$ is contained in the region $0\leq \te\leq \pi/2$, that is, $f(r,\te)=0$ if 
$\te\notin[0,\pi/2]$, we can study instead the comparable expression
\[\int \ab{f(r,\te)f(r',\te')}^{3/2}\ab{\te-\te'}^{-1/2}d\te d\te' drdr'.\]

Instead of using fractional integration in $\te$ and $\te'$ and H\"older's inequality in $r$ and $r'$ we want to obtain a ``cap
type'' 
inequality for $T$ of the form in \cite{MVV}*{Theorem 4.2}.

\begin{define}
\label{definition-of-cap}
 By a cap $\cp$, we mean a set $\cp\subset \Gamma$ whose projection to the plane $\R^2\times\{0\}$ is of the form 
 $[2^{k-1},2^k]\times J$, when written in polar coordinates $(r,\te)$, where $k\in\Z$ and $J\subset [0,2\pi]$ is an interval. We
will identify the cap $\cp$ with its projection to the plane $\R^2\times\{0\}$ and write $\cp=[2^{k-1},2^k]\times J$.
\end{define}

For a cap $\cp=[2^{k-1},2^k]\times J$, $\ab{\cp}:=\sigma(\cp)=2^{k-1}\ab{J}$, and for any $\la\geq 0$, 
$\la\cp=[\la 2^{k-1},\la2^k]\times J$, so $\sigma(\la \cp)=\la\sigma(\cp)$.

\begin{define}
 Let $0< \alpha<1$ and $p=2/(2-\alpha)$. Define, for $f,g\in L^p(\R)$, the bilinear operator
 \begin{equation}
 \label{B-operator}
 B(f,g)=\int_{\R^2} f(x)g(x')\ab{x-x'}^{-\alpha}dxdx'.
 \end{equation}
\end{define}
Note that the kernel $x\in\R\mapsto\ab{x}^{-\alpha}$ has a strictly positive Fourier transform, and thus $B$ is nondegenerate 
and satisfies the Cauchy-Schwarz inequality $\ab{B(f,g)}^2\leq B(f,f)B(g,g)$.

Lemma \ref{fractional-integration} implies that $\ab{B(f,f)}\leq C_p\norma{f}_{L^p(\R)}^2$. We can say more if we work with the 
Lorentz spaces $L^{p,q}(\R)$ (see \cite{SW} for an introduction to Lorentz spaces). We have the following bound for $B$
\cite{ONeil}:
\[\ab{B(f,f)}\lesssim \norma{f}_{L^{p,2}(\R)}^2.\]
This bound will allow us to prove the following:
\begin{prop}
\label{cap-bound-for-b-splitted-f}
 Let $0<\alpha<1$ and $p=2/(2-\alpha)$. There exist constants $C<\infty$ and $\delta\in(0,2)$ such that for all $f\in L^p(\R)$
the following inequality holds:
 \[B(f,f)\leq C \norma{f}_{L^p(\R)}^{2-\delta}\sup\limits_{k,I}\norma{f_k}_{L^p(\R)}^{\delta}
   \biggl(\frac{\ab{E_k\cap I}}{\ab{E_k}+\ab{I}}\biggr)^{\delta},\]
 where $I$ ranges over all compact intervals of $\R$, $E_k=\{x\in\R:2^k\leq \ab{f(x)}<2^{k+1}\}$ and $f_k=f\chi_{E_k}$, $k\in\Z$.
\end{prop}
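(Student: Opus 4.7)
I would combine the Lorentz-space refinement of Hardy--Littlewood--Sobolev cited in the preceding paragraph with a spatial refinement at each dyadic level set. The first step is to apply the O'Neil bound $\ab{B(f,f)}\lesssim\norma{f}_{L^{p,2}(\R)}^2$ and then use the level-set structure of the decreasing rearrangement of $f$ to rewrite
\[\norma{f}_{L^{p,2}(\R)}^2\;\asymp\;\suma{k\in\Z}{}\norma{f_k}_{L^p(\R)}^2.\]

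\textbf{Single-level spatial bound.} The core technical step is the per-level estimate
\[B(f_k,f_k)\;=\;2^{2k}\iint_{E_k\times E_k}\ab{x-y}^{-\alpha}\,dx\,dy\;\lesssim\;\norma{f_k}_p^{2}\,\mu_k^{c},\]
where $\mu_k=\sup_I\ab{E_k\cap I}/(\ab{E_k}+\ab{I})$ and $c=\min(\alpha,1-\alpha)>0$. I would prove it via the layer cake identity
\[\int_{E_k}\ab{x-y}^{-\alpha}\,dy\;=\;\alpha\int_0^\infty \ab{E_k\cap(x-u,x+u)}\,u^{-\alpha-1}\,du,\]
together with the triple bound $\ab{E_k\cap(x-u,x+u)}\leq\min(2u,\,\ab{E_k},\,\mu_k(\ab{E_k}+2u))$. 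Integrating across the three regimes $u\leq\mu_k\ab{E_k}$, $\mu_k\ab{E_k}\leq u\leq\ab{E_k}/\mu_k$, and $u\geq\ab{E_k}/\mu_k$ produces $\int_{E_k}\ab{x-y}^{-\alpha}dy\lesssim(\mu_k^{1-\alpha}+\mu_k^\alpha)\ab{E_k}^{1-\alpha}\lesssim\mu_k^c\ab{E_k}^{1-\alpha}$ uniformly in $x\in E_k$. Multiplying by $\ab{E_k}$ and by $2^{2k}$, and recalling $2/p=2-\alpha$, yields the displayed bound.

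\textbf{Globalization and closing.} The third step is to upgrade the single-level refinement to $B(f,f)$ itself, yielding the refined Lorentz bound
\[B(f,f)\;\lesssim\;\suma{k}{}\norma{f_k}_p^2\,\mu_k^c.\]
I would obtain this by re-entering the proof of O'Neil's inequality and inserting the spatial improvement of step two into each dyadic block of the convolution estimate; the off-diagonal contributions $B(f_k,f_l)$ are handled by the Cauchy--Schwarz inequality for the positive-definite form $B$ (the kernel $\ab{x}^{-\alpha}$ has a strictly positive Fourier transform) combined with the disjointness of the level sets. In the fourth step, setting $A=\sup_{k,I}\norma{f_k}_p\,\ab{E_k\cap I}/(\ab{E_k}+\ab{I})$, I would factor
\[\norma{f_k}_p^2\mu_k^c\;=\;\norma{f_k}_p^{2-c}(\norma{f_k}_p\mu_k)^c\;\leq\;A^{c}\,\norma{f_k}_p^{2-c}.\]
An elementary case check (splitting $\alpha\leq 1/2$ and $\alpha\geq 1/2$) shows that $c=\min(\alpha,1-\alpha)\leq 2-p=2(1-\alpha)/(2-\alpha)$ throughout $\alpha\in(0,1)$, so the elementary summation inequality $\sum a_k^q\leq(\sum a_k^p)^{q/p}$ valid for $a_k\geq 0$ and $q\geq p$ (applied with $a_k=\norma{f_k}_p$ and $q=2-c$) yields $\sum_k\norma{f_k}_p^{2-c}\leq\norma{f}_p^{2-c}$. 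Combining gives $B(f,f)\lesssim A^c\norma{f}_p^{2-c}$, i.e.\ the proposition with $\delta=c=\min(\alpha,1-\alpha)$.

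\textbf{Main obstacle.} The crux is step three. The naive bilinear Cauchy--Schwarz $B(f_k,f_l)\leq B(f_k,f_k)^{1/2}B(f_l,f_l)^{1/2}$ only delivers $B(f,f)\leq(\sum_k\norma{f_k}_p\mu_k^{c/2})^2$, and this quantity is generally strictly larger than $\sum_k\norma{f_k}_p^2\mu_k^c$: the $\ell^p$ bookkeeping of step four cannot close the argument starting from it. The remedy is to reopen the interior of O'Neil's convolution estimate and insert the spatial gain of step two at every dyadic block, exploiting the disjointness of the $E_k$ to limit cross interactions. Once that delicate globalization is established, the layer-cake calculation of step two is a concrete real-variable exercise and the final H\"older step is elementary.
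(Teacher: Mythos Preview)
Your per-level estimate (step two) is correct and is essentially the paper's Lemma~4.3 proved by a different but valid route. The gap is precisely where you locate it: step three. You assert
\[
B(f,f)\lesssim\sum_{k}\norma{f_k}_p^{2}\,\mu_k^{c},
\]
but the justification offered---``re-enter the proof of O'Neil's inequality and insert the spatial improvement at every dyadic block''---is not a proof. The difficulty is real: your own layer-cake computation shows that $\int_{E_l}\ab{x-y}^{-\alpha}\,dy\lesssim\mu_l^{c}\ab{E_l}^{1-\alpha}$ uniformly in $x$, which after symmetrisation gives $B(f_k,f_l)\lesssim\norma{f_k}_p\norma{f_l}_p(\mu_k\mu_l)^{c/2}$, i.e.\ nothing beyond the Cauchy--Schwarz bound you already rejected. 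O'Neil's proof (via rearrangement or Lorentz-space Young) does not naturally carry the spatial quantities $\mu_k$, so there is no obvious ``interior'' to reopen. Without an additional mechanism producing off-diagonal decay in $\ab{k-l}$, the $\ell^1$ versus $\ell^2$ loss you identified persists, and the argument does not close.

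The paper sidesteps this entirely. Rather than seeking a summable refinement over all levels, it introduces a threshold $\eta>0$, sets $S=\{k:\norma{f_k}_p\geq\eta\norma{f}_p\}$ and $g=\sum_{k\in S}f_k$. On the complement, the unrefined Lorentz bound already gives $B(f-g,f-g)\lesssim\eta^{2-p}\norma{f}_p^{2}$; on $S$ there are at most $\eta^{-p}$ terms, so the crude triangle-plus-Cauchy--Schwarz estimate $B(g,g)^{1/2}\leq\ab{S}\max_{k}B(f_k,f_k)^{1/2}$ is affordable. Combining and optimising in $\eta$ yields
\[
B(f,f)\lesssim\norma{f}_p^{2-\delta}\sup_{k}B(f_k,f_k)^{\delta/2}
\]
for an explicit $\delta\in(0,1)$, and now a single application of your step-two bound to the supremum finishes the proof. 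The point is that one never needs to sum the spatial gains $\mu_k^{c}$; a supremum suffices, and the threshold device converts the off-diagonal problem into a finite one. Your ingredients are all present---only the globalisation strategy needs to be replaced.
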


\begin{proof}
We will use the following characterization of the $L^{p,2}$ norm. If we decompose $f$ as in the statement of the proposition, 
$f=\sum_{k\in\Z}{}f_k$ where $f_k$ have disjoint supports, $E_k$, and $2^{k}\chi_{E_k}\leq \ab{f_k}<2^{k+1}\chi_{E_k}$, then
\begin{equation}
 \label{lp2-norm-decomposed}
 \norma{f}_{L^{p,2}(\R)}^2\asymp \suma{k\in\Z}{}\norma{f_k}_{L^p(\R)}^2.
\end{equation}

It follows from \eqref{lp2-norm-decomposed} that $\norma{f}_{L^{p,2}}^2\lesssim\norma{f}_{L^p}^p\sup_k\norma{f_k}_{L^p}^{2-p}$, 
from where the following bound is obtained:
\[\ab{B(f,f)}\lesssim\norma{f}_{L^p(\R)}^p\sup\limits_{k\in\Z}\norma{f_k}_{L^p(\R)}^{2-p}. \]

We can improve the previous estimate. For this, let $\eta>0$, $S=\{k: \norma{f_k}_p\geq \eta\norma{f}_p\}$, and 
$g=\sum_{k\in S}{}f_k$. Then $\ab{B(f-g,f-g)}\lesssim \eta^{2-p}\norma{f}_{L^p}^2$. Since
$\norma{f}_{L^p}^p=\sum_{k}\norma{f_k}_{L^p}^p$ we obtain that $\ab{S}\leq \eta^{-p}$. Therefore, by Cauchy-Schwarz
\[\ab{B(g,g)}^{1/2}\leq\suma{k\in S}{}\ab{B(f_k,f_k)}^{1/2}\leq \ab{S}\max\limits_{k\in S}
  \ab{B(f_k,f_k)}^{1/2}\leq \eta^{-p}\max\limits_{k\in S}\ab{B(f_k,f_k)}^{1/2}.\]

We deduce that 
\[\ab{B(f,f)}^{1/2}\leq \ab{B(f-g,f-g)}^{1/2}+\ab{B(g,g)}^{1/2}\leq  
  \eta^{(2-p)/2}\norma{f}_{L^p}+\eta^{-p}\max\limits_{k\in S}\ab{B(f_k,f_k)}^{1/2},\]
and squaring we obtain that for all $\eta>0$
\[\ab{B(f,f)}\lesssim \eta^{2-p}\norma{f}_{L^p}^{2}+\eta^{-2p}\sup\limits_{k\in\Z}\ab{B(f_k,f_k)}.\]
Optimizing in $\eta$ gives
\begin{equation}
 \label{optimazed-in-eta}
 \ab{B(f,f)}\lesssim \sup\limits_{k\in\Z}\ab{B(f_k,f_k)}^{\delta/2}\norma{f}_{L^p}^{2-\delta},
\end{equation}
for some $\delta\in(0,1)$ (the optimization gives $\delta=2(2-p)/(2+p)$). Thus, it is then enough to obtain a bound on 
$B(f,f)$ where $f=\chi_E$.

\begin{lemma}
 \label{b-onsimple-functions}
 There exist $C<\infty$ and $\gamma\in(0,1)$ with the following property. For every $E$ subset of $\R$ of finite measure
 \begin{equation}
  \label{estimate-b-simple-function}
  B(\chi_E,\chi_E)\leq C\norma{\chi_E}_{L^p(\R)}^{2}\biggl(\sup\limits_{I}\frac{\ab{E\cap I}}{\ab{E}+\ab{I}}\biggr)^\gamma,
 \end{equation}
where the supremum ranges over all compact intervals $I$ of $\R$.
\end{lemma}
\begin{proof}
 Let $\{I_j^k\}_{j\in\Z}$ be a partition of the real line into intervals of equal length $2^{k}$. Then

\begin{align*}
 B(\chi_E,\chi_E)&=\iint\frac{\chi_E(x)\chi_E(y)}{\ab{x-y}^\alpha}\,dxdy=
 \suma{k}{}\iint\limits_{\{2^{k-1}\leq \ab{x-y}<2^k\} }\frac{\chi_E(x)\chi_E(y)}{\ab{x-y}^\alpha}\,dxdy\\
&\asymp \suma{k}{}\suma{j}{}2^{-k\alpha}\ab{E\cap I_j^k}\ab{E\cap \tilde I_j^k}\\
&\lesssim \suma{k}{}\suma{j}{}2^{-k\alpha}\ab{E\cap \tilde I_j^k}^2
\end{align*}
where $\tilde I_j^k$ has the same center as $I_j^k$ and double length. From now on we will rename $\tilde I_j^k$ by $I_j^k$.
                                                                                                                                                                                                                                                                                          
Now, we fix $k$ and estimate $\sum_{j}{}2^{-k\alpha}\ab{E\cap  I_j^k}^2$. Let $n$ be such that $2^n\leq \ab{E}<2^{n+1}$. We 
will divide the analysis into the cases where $k\leq n$ and $k>n$. Recall that $p=2/(2-\alpha)$, and let $\gamma\in (0,1)$ be a
number to be determined later. We first consider the case $k\leq n$. We have
\begin{align*}
 \suma{j}{}2^{-k\alpha}\ab{E\cap  I_j^k}^2&\leq \suma{j}{}\ab{E\cap I_j^k} 2^{-k\alpha}\sup\limits_{i}\ab{E\cap I_i^k}\\
&\lesssim \ab{E}2^{-k\alpha}\Bigl(\sup\limits_{i}\frac{\ab{E\cap I_i^k}}{\ab{E}+\ab{I_i^k}}\Bigr)^\gamma 2^{k(1-\gamma)}
   \ab{E}^\gamma\\
&\asymp \ab{E}^{1+\gamma}2^{k(1-\alpha-\gamma)}\Bigl(\sup\limits_{i}\frac{\ab{E\cap I_i^k}}{\ab{E}+\ab{I_i^k}}\Bigr)^\gamma\\
&=\ab{E}^{2-\alpha}\ab{E}^{-1+\alpha+\gamma}2^{k(1-\alpha-\gamma)}
  \Bigl(\sup\limits_{i}\frac{\ab{E\cap I_i^k}}{\ab{E}+\ab{I_i^k}}\Bigr)^\gamma\\
&\lesssim\norma{\chi_E}_{L^p}^{2}2^{-(n-k)(1-\alpha-\gamma)}\Bigl(\sup\limits_{i}\frac{\ab{E\cap
  I_i^k}}{\ab{E}+\ab{I_i^k}}\Bigr)^\gamma.
\end{align*}
Now, if $k>n$ we have
\begin{align*}
 \suma{j}{}2^{-k\alpha}\ab{E\cap  I_j^k}^2&\leq \suma{j}{}\ab{E\cap I_j^k} 2^{-k\alpha}\sup\limits_{i}\ab{E\cap I_i^k}\\
&\lesssim \ab{E}2^{-k\alpha}\Bigl(\sup\limits_{i}\frac{\ab{E\cap I_i^k}}{\ab{E}+\ab{I_i^k}}\Bigr)^\gamma 2^{k\gamma}
  \ab{E}^{1-\gamma}\\
&\asymp \ab{E}^{2-\gamma}2^{-k(\alpha-\gamma)}\Bigl(\sup\limits_{i}\frac{\ab{E\cap I_i^k}}{\ab{E}+\ab{I_i^k}}\Bigr)^\gamma\\
&=\ab{E}^{2-\alpha}\ab{E}^{\alpha-\gamma}2^{-k(\alpha-\gamma)}
  \Bigl(\sup\limits_{i}\frac{\ab{E\cap I_i^k}}{\ab{E}+\ab{I_i^k}}\Bigr)^\gamma\\
&\lesssim\norma{\chi_E}_{L^p}^{2}2^{-(k-n)(\alpha-\gamma)}\Bigl(\sup\limits_{i}\frac{\ab{E\cap
  I_i^k}}{\ab{E}+\ab{I_i^k}}\Bigr)^\gamma.
\end{align*}
Thus, if we choose $\gamma>0$ smaller than $\min(1-\alpha,\alpha)$, we obtain the desired conclusion after adding over $k$
\[B(\chi_E,\chi_E)\lesssim\norma{\chi_E}_{L^p}^{2}\biggl(\sup\limits_{I}\frac{\ab{E\cap
I}}{\ab{E}+\ab{I}}\biggr)^\gamma.\qedhere\]
\end{proof}
By combining Lemma \ref{b-onsimple-functions} and \eqref{optimazed-in-eta} we obtain that for $f\in L^p$
\[B(f,f)\lesssim \norma{f}_{L^p}^{2-\delta}\sup\limits_{k,I}\norma{f_k}_{L^p}^{\delta}
  \biggl(\frac{\ab{E_k\cap I}}{\ab{E_k}+\ab{I}}\biggr)^{\delta\gamma/2},\]
that implies (after we rename $\delta\gamma/2$ by $\delta$)
\[B(f,f)\lesssim \norma{f}_{L^p}^{2-\delta}\sup\limits_{k,I}\norma{f_k}_{L^p}^{\delta}
  \biggl(\frac{\ab{E_k\cap I}}{\ab{E_k}+\ab{I}}\biggr)^{\delta},\]
since $\norma{f_k}_p/\norma{f}_p\leq 1$ and so $(\norma{f_k}_p/\norma{f}_p)^\delta\leq 
(\norma{f_k}_p/\norma{f}_p)^{\delta\gamma/2}$.
\end{proof}

We note that $\sup_{k,I}\norma{f_k}_{L^p}^{\delta}(\ab{E_k\cap I}/(\ab{E_k}+\ab{I}))^{\delta}$ is bounded by
$\bigl(\sup_I \ab{I}^{-1+1/p}\int_I \ab{f}\bigr)^\delta$. Indeed, we have 
\[\sup\limits_{k,I}\norma{f_k}_{L^p}^{\delta}\biggl(\frac{\ab{E_k\cap I}}{\ab{E_k}+\ab{I}}\biggr)^{\delta}\lesssim
\biggl(\sup\limits_{k,I} \ab{I}^{-1+1/p}\int_I \ab{f_k}\biggr)^\delta.\]

To see this, we rewrite $\norma{f_k}_{L^p}^{\delta}\asymp 2^{k\delta}\ab{E_k}^{\delta/p}$ and $\int_I \ab{f_k}\asymp
2^k\ab{E_k\cap I}$. It suffices to show that for all $k$ and $I$
\[\ab{E_k}^{\delta/p}\biggl(\frac{\ab{E_k\cap I}}{\ab{E_k}+\ab{I}}\biggr)^\delta\leq \ab{I}^{(-1+1/p)\delta}\ab{E_k\cap
I}^\delta,\]
which is equivalent to $\ab{E_k}^{\delta/p}\ab{I}^\delta\leq \ab{I}^{\delta/p}(\ab{E_k}+\ab{I})^\delta$. This holds trivially 
in the case $\ab{E_k}\leq \ab{I}$, while in the case $\ab{E_k}>\ab{I}$, we rewrite the inequality as
\[1\leq \biggl(1+\frac{\ab{I}}{\ab{E_k}}\biggr)^\delta \biggl(\frac{\ab{I}}{\ab{E_k}}\biggr)^{\delta(-1+1/p)}, \]
which holds because $-1+1/p<0$.

We have proved the following proposition.
\begin{prop}
Let $0<\alpha<1$ and $p=2/(2-\alpha)$. There exist $C<\infty$ and $\delta\in(0,2)$ such that for all $f\in L^p(\R)$
 \begin{equation}
 \label{bound-b-l1}
B(f,f)\leq C\norma{f}_{L^p(\R)}^{2-\delta}\biggl(\sup\limits_{I} \ab{I}^{-1+1/p}\int_I \ab{f}dx\biggr)^{\delta}.
\end{equation}
where $I$ ranges over all compact intervals of $\R$.
\end{prop}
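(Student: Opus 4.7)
The idea is to deduce \eqref{bound-b-l1} directly from Proposition \ref{cap-bound-for-b-splitted-f}, which, for the same $\delta\in(0,2)$, already produces
\[B(f,f)\leq C\norma{f}_{L^p(\R)}^{2-\delta}\sup_{k,I}\norma{f_k}_{L^p(\R)}^{\delta}\Bigl(\frac{\ab{E_k\cap I}}{\ab{E_k}+\ab{I}}\Bigr)^{\delta}.\]
Consequently, it suffices to establish the uniform pointwise estimate
\[\norma{f_k}_{L^p(\R)}\cdot \frac{\ab{E_k\cap I}}{\ab{E_k}+\ab{I}}\lesssim \ab{I}^{-1+1/p}\int_I \ab{f_k}\,dx\]
for every $k\in\Z$ and every compact interval $I\subset\R$, with implicit constant independent of $k$, $I$, and $f$. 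Raising the resulting bound to the $\delta$-th power, taking the supremum over $k$ and $I$, and using the monotonicity $\int_I \ab{f_k}\leq \int_I \ab{f}$ then converts Proposition \ref{cap-bound-for-b-splitted-f} into \eqref{bound-b-l1}.

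For the pointwise estimate itself, my plan is to invoke the elementary two-sided comparisons
\[\norma{f_k}_{L^p(\R)}\asymp 2^k\ab{E_k}^{1/p},\qquad \int_I \ab{f_k}\,dx\asymp 2^k\ab{E_k\cap I},\]
both of which are immediate from the defining relation $2^k\leq \ab{f}<2^{k+1}$ on $E_k$. Substituting these into the target inequality and cancelling the common factor $2^k\ab{E_k\cap I}$ reduces matters to the purely geometric statement
\[\ab{E_k}^{1/p}\ab{I}^{1-1/p}\leq \ab{E_k}+\ab{I}.\]
This in turn is an immediate instance of Young's inequality with conjugate exponents $p$ and $p/(p-1)$ applied to $a=\ab{E_k}^{1/p}$ and $b=\ab{I}^{1-1/p}$, since $a^p/p+b^{p/(p-1)}/(p/(p-1))\leq a^p+b^{p/(p-1)}=\ab{E_k}+\ab{I}$.

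The entire argument is thus a reduction to Proposition \ref{cap-bound-for-b-splitted-f}, followed by a single routine application of Young's inequality. I do not anticipate any obstacle; the substantive content has already been handled in the proof of that earlier proposition, and the additional step here is purely a geometric/scaling bookkeeping that loses no information in the passage from the cap factor $\ab{E_k\cap I}/(\ab{E_k}+\ab{I})$ to the normalized $L^1$ average $\ab{I}^{-1+1/p}\int_I \ab{f}$.
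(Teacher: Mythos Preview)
Your proof is correct and follows essentially the same route as the paper: both reduce to Proposition \ref{cap-bound-for-b-splitted-f}, use the comparisons $\norma{f_k}_{L^p}\asymp 2^k\ab{E_k}^{1/p}$ and $\int_I\ab{f_k}\asymp 2^k\ab{E_k\cap I}$, and arrive at the elementary inequality $\ab{E_k}^{1/p}\ab{I}^{1-1/p}\leq \ab{E_k}+\ab{I}$. The only difference is cosmetic---the paper verifies this last inequality by a two-case split on whether $\ab{E_k}\leq\ab{I}$, whereas you invoke Young's inequality; both are equally valid.
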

Using the Cauchy-Schwarz inequality for $B$ and a decomposition as in Lemma \ref{fractional-integration-circle}, we obtain 
the corollary as follows:
\begin{cor}
 Let $0<\alpha<1$ and $p=2/(2-\alpha)$. There exist $C<\infty$ and $\delta\in(0,2)$ such that for all $f\in L^p([0,2\pi])$,
\[\int_{[0,2\pi]^2} f(x)f(y)\ab{\sin(x-y)}^{-\alpha}dxdy\leq C\norma{f}_{L^p([0,2\pi])}^{2-\delta}\biggl(\sup\limits_{I}
\ab{I}^{-1+1/p}\int_I
\ab{f}dx\biggr)^{\delta},\]
where $I$ ranges over all subintervals of $[0,2\pi]$.
\end{cor}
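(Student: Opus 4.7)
The plan is to reduce this periodic estimate to the Euclidean inequality \eqref{bound-b-l1} for the bilinear form $B$ on $\R$, by mimicking the sixteen-piece partition used in the proof of Lemma~\ref{fractional-integration-circle}. First, I would partition $[0,2\pi]$ into the four consecutive intervals $J_m = [m\pi/2, (m+1)\pi/2]$, $m\in\{0,1,2,3\}$, and split the double integral into the sixteen pieces indexed by pairs $(m,n)$. It is enough to bound each piece by the right-hand side of the claimed inequality.

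For each piece $\int_{J_m\times J_n}$, I would distinguish two regimes as in the proof of Lemma~\ref{fractional-integration-circle}. In the small-angle regime, either $\ab{x-y}\leq \pi/2$ throughout $J_m\times J_n$ (so that $\ab{\sin(x-y)}\asymp \ab{x-y}$), or a substitution $y'=y-\pi$ (and, if necessary, a further dyadic split of $J_m$ and $J_n$ into halves) brings us to this situation; the identity $\ab{\sin(x-y)}=\ab{\sin(x-y')}$ keeps the kernel intact while ensuring $\ab{x-y'}\leq \pi/2$. The piece is then comparable to $B(F,G)$ for $F,G$ two restrictions (possibly translated by $\pi$) of $f$ to subintervals of length at most $\pi/2$. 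I would apply the Cauchy--Schwarz inequality for $B$, namely $\ab{B(F,G)}\leq B(F,F)^{1/2} B(G,G)^{1/2}$, and then use \eqref{bound-b-l1} on each factor; combined with $\norma{F}_p,\norma{G}_p\leq \norma{f}_p$ and the translation invariance of both the $L^p$ norm and the sup quantity, this yields a bound by $C\norma{f}_p^{2-\delta}\bigl(\sup_I \ab{I}^{-1+1/p}\int_I\ab{f}\bigr)^\delta$.

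In the bounded-kernel regime, where $\ab{\sin(x-y)}\gtrsim 1$ on the relevant subregion of $J_m\times J_n$, I would bound the kernel by a constant and estimate the piece by $\bigl(\int_{J_m}\ab{f}\bigr)\bigl(\int_{J_n}\ab{f}\bigr)$. Choosing $I=J_m$ (respectively $J_n$) shows each factor is at most a constant times $\sup_I \ab{I}^{-1+1/p}\int_I\ab{f}$, and the resulting $(\sup)^2$ rewrites as $(\sup)^\delta(\sup)^{2-\delta}\leq (\sup)^\delta \norma{f}_p^{2-\delta}$ via the trivial H\"older bound $\sup_I \ab{I}^{-1+1/p}\int_I\ab{f}\leq \norma{f}_p$. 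Summing the sixteen contributions would then complete the proof. The main obstacle I anticipate is the careful bookkeeping in the antipodal subcases, where the translation by $\pi$ mixes pieces of $f$ from different $J_m$; but the $L^p$ norm and the cap sup are both translation-invariant, so these quantities on the translated pieces are dominated by those on $f$ itself, and no loss occurs.
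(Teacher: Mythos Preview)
Your proposal is correct and follows essentially the same route as the paper: the paper's proof consists of the single sentence ``Using the Cauchy--Schwarz inequality for $B$ and a decomposition as in Lemma~\ref{fractional-integration-circle} we obtain the corollary,'' and you have simply unpacked exactly those two ingredients (the sixteen-piece partition with the $\pi$-translation trick, plus $\ab{B(F,G)}^2\leq B(F,F)B(G,G)$ followed by \eqref{bound-b-l1}). Your handling of the bounded-kernel pieces via $\sup_I \ab{I}^{-1+1/p}\int_I\ab{f}\leq \norma{f}_p$ is a clean way to absorb those terms into the same right-hand side.
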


We now consider the operator we will use to control the adjoint Fourier operator $T$.
\begin{define}
 Let $0<\alpha<1$ and $p=2/(2-\alpha)$. We define the bilinear operator $Q:L^p(\R^2)\times L^p(\R^2)\to\R$ by
 \begin{equation}
  \label{definition-of-q}
  Q(f,g)=\int_{(\R^2)^2} f(r,x)g(r',x')\ab{x-x'}^{-\alpha}dxdx'drdr',
 \end{equation}

\end{define}
Note that we can write $Q(f,f)=B(\int f(r,x)dr,\int f(r',x')dr')$.

For $f\in L^p_{r,x}$ with $\norma{\int f(r,x)dr}_{L^p_x}<\infty$, we use \eqref{bound-b-l1}  to obtain 
\begin{equation}
\label{relating-q-and-b}
Q(f,f)\lesssim \biggl\Vert\int \ab{f(r,x)}dr\biggr\Vert_{L^p_x}^{2-\delta}\biggl(\sup\limits_{I} \ab{I}^{-1+1/p}
  \int_I\int \ab{f(r,x)}drdx\biggr)^{\delta}.
\end{equation}

Suppose that $f(r,x)$ is supported where $2^{k-1}\leq r<2^k$, then $\int_I\int f(r,x)drdx=\int_\cp f(r,x)$, where
$\cp=[2^{k-1},2^k]\times I$, and $\norma{\int f(r,x)dr}_{L^p_x}\leq 2^{(k-1)(1-1/p)}\norma{f}_{L^p_{r,x}}$. Thus, it follows from
\eqref{relating-q-and-b} that
\begin{equation}
 \label{bound-for-q}
 Q(f,f)\lesssim 2^{2k(1-1/p)}\norma{f}_{L^p_{r,x}}^{2-\delta}\biggl(\sup\limits_{\cp} \ab{\cp}^{-1+1/p}\int_\cp
\ab{f(r,x)}drdx\biggr)^{\delta} ,
\end{equation}
where we used $2^{k-1}\ab{I}=\ab{\cp}$.

In the case we are interested in, we will need to estimate $Q(\ab{f_k}^{3/2},\ab{f_k}^{3/2})$ with the support of $f_k$ as before
and $f_k\in L^2_{r,x}$, with $\alpha=\frac{1}{2}$ and $p=\frac{4}{3}$.

\begin{cor}
 \label{cross-term-in-q}
There exist $C<\infty$ and $\delta\in(0,2)$ with the following property. Let $k,k'\in\Z$ and $f,g\in L^{4/3}(\R^2)$ and suppose
that
$f(r,x),g(r,x)$ are supported in the regions $[2^{k-1},2^k]\times \R$ and $[2^{k'-1},2^{k'}]\times \R$ respectively. Then
 \begin{align}
 \ab{Q(f,g)}^2\leq C &2^{(k+k')/2}\norma{f}_{L^{4/3}_{r,x}}^{2-\delta}\norma{g}_{L^{4/3}_{r,x}}^{2-\delta}
 \biggl(\sup\limits_{\cp} \ab{\cp}^{-1/4}\int_\cp \ab{f}drdx\biggr)^{\delta}\nonumber\\
 \label{bound-q-cross-term}
 &\cdot \biggl(\sup\limits_{\cp}\ab{\cp}^{-1/4}\int_\cp\ab{g}drdx\biggr)^{\delta}.
\end{align}
\end{cor}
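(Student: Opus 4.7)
\medskip

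\noindent\textbf{Proof proposal.} The plan is to reduce the cross-term estimate to the diagonal bound \eqref{bound-for-q} by exploiting the positive semidefiniteness of the kernel. First I would observe that since $x\mapsto \ab{x}^{-\alpha}$ has a strictly positive Fourier transform on $\R$, the bilinear form $B$ defined in \eqref{B-operator} is positive semidefinite, hence the Cauchy--Schwarz inequality
\[
\ab{B(u,v)}^{2}\leq B(u,u)\,B(v,v)
\]
holds for all admissible $u,v$. Next I would use the factorization
\[
Q(f,g)=B\!\Bigl(\int f(r,\cdot)\,dr,\,\int g(r',\cdot)\,dr'\Bigr),
\]
which is immediate from the definition \eqref{definition-of-q}. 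Combining these two facts yields
\[
\ab{Q(f,g)}^{2}\leq Q(f,f)\,Q(g,g).
\]

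Now I would apply the diagonal estimate \eqref{bound-for-q} separately to $Q(f,f)$ and $Q(g,g)$, using that $f$ is supported where $2^{k-1}\leq r<2^{k}$ and $g$ is supported where $2^{k'-1}\leq r<2^{k'}$. With $\alpha=1/2$ and $p=4/3$, one has $2(1-1/p)=1/2$, so
\[
Q(f,f)\lesssim 2^{k/2}\norma{f}_{L^{4/3}_{r,x}}^{2-\delta}\Bigl(\sup_{\cp}\ab{\cp}^{-1/4}\int_{\cp}\ab{f}\,drdx\Bigr)^{\delta},
\]
and similarly for $Q(g,g)$ with $2^{k'/2}$ in place of $2^{k/2}$. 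Multiplying the two inequalities and using $2^{k/2}\cdot 2^{k'/2}=2^{(k+k')/2}$ gives exactly the bound \eqref{bound-q-cross-term}.

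The only potentially subtle point is the justification of the Cauchy--Schwarz step, i.e., that the one-dimensional marginals $\int f(r,\cdot)\,dr$ and $\int g(r',\cdot)\,dr'$ lie in a space on which $B$ is defined and positive semidefinite; this is handled by noting that these marginals belong to $L^{4/3}(\R)$ (with norms controlled by a power of $2^{k}$, respectively $2^{k'}$, times $\norma{f}_{L^{4/3}_{r,x}}$, as in the paragraph preceding \eqref{bound-for-q}), so that the diagonal estimate \eqref{bound-for-q} is applicable. Everything else is a bookkeeping computation with the exponents.
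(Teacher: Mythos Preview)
Your proposal is correct and matches the paper's proof exactly: the paper simply invokes the Cauchy--Schwarz inequality for $Q$, namely $Q(f,g)^{2}\leq Q(f,f)\,Q(g,g)$, and then applies the diagonal bound \eqref{bound-for-q} to each factor. Your additional justification of Cauchy--Schwarz via the factorization through $B$ and the positive semidefiniteness of the kernel is a welcome elaboration of what the paper leaves implicit.
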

\begin{proof}
 This follows from \eqref{bound-for-q} and the Cauchy-Schwarz inequality for $Q$,
 \[Q(f,g)^2\leq Q(f,f)Q(g,g).\qedhere\]
\end{proof}
For $f_k,f_{k'}\in L^2(\R^2_{(r,x)})$ supported where $2^{k-1}\leq r <2^k$ and $2^{k'-1}\leq r <2^{k'}$, respectively, we obtain
\begin{multline}
\label{bound-q-cross-term-3/2}
 Q(\ab{f_k}^{3/2},\ab{f_{k'}}^{3/2})^2\lesssim
2^{(k+k')/2}\norma{f_k}_{L^2_{r,x}}^{3(2-\delta)/2}\norma{f_{k'}}_{L^2_{r,x}}^{3(2-\delta)/2}\\
\cdot \Bigl(\sup\limits_{\cp} \ab{\cp}^{-1/4}\int_\cp \ab{f_k}^{3/2}drdx\Bigr)^{\delta}\Bigl(\sup\limits_{\cp}
\ab{\cp}^{-1/4}\int_\cp \ab{f_{k'}}^{3/2}drdx\Bigr)^{\delta}.
\end{multline}

The use of the Cauchy-Schwarz inequality for $Q$, and a decomposition as in Lemma \ref{fractional-integration-circle} implies that
for $f_k,f_{k'}\in L^2(\R_r\times [0,2\pi]_x)$ supported where $2^{k-1}\leq r <2^k$ and $2^{k'-1}\leq r <2^{k'}$ the following
estimate holds:
\begin{multline}
 \label{bound-Q-circle}
 \Bigl(\int\ab{f_k(r,x)f_{k'}(r',x')}^{3/2}\ab{\sin(x-x')}^{-1/2}dxdx'drdr'\Bigr)^2
\lesssim 2^{(k+k')/2}\norma{f_k}_{L^2_{r,x}}^{3(2-\delta)/2}\\
\cdot\norma{f_{k'}}_{L^2_{r,x}}^{3(2-\delta)/2} \Bigl(\sup\limits_{\cp} \ab{\cp}^{-1/4}\int_\cp
\ab{f_k}^{3/2}drdx\Bigr)^{\delta}\Bigl(\sup\limits_{\cp} \ab{\cp}^{-1/4}\int_\cp \ab{f_{k'}}^{3/2}drdx\Bigr)^{\delta}.
\end{multline}

\section{The cap bound for the adjoint Fourier restriction operator}\label{section-cap-bound}
\begin{prop}
 \label{cap-bilinear-estimate}
 There exist $C<\infty$ and $\delta\in(0,2)$ with the following property. Let $k,k'\in\Z$ and $f,g\in L^2(\Gamma^2)$, with $f$ 
 and $g$ supported in the regions $2^{k-1}\leq \ab{y}<2^k$ and $2^{k'-1}\leq\ab{y}<2^{k'}$, respectively, then
 \begin{multline}
  \label{cap-bilinear-inequality}
  \norma{Tf\cdot Tg}_{L^3(\R^3)}\leq C
2^{-\ab{k-k'}/6}\norma{f}_{L^2(\Gamma^2)}^{(2-\delta)/2}\norma{g}_{L^2(\Gamma^2)}^{(2-\delta)/2}\\
\cdot \Bigl(\sup\limits_{\cp} \ab{\cp}^{-1/4}\int_\cp \ab{f}^{3/2}d\sigma\Bigr)^{\delta/3}\Bigl(\sup\limits_{\cp}
\ab{\cp}^{-1/4}\int_\cp \ab{g}^{3/2}d\sigma\Bigr)^{\delta/3}.
 \end{multline}
\end{prop}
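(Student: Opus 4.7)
The plan is to mirror the proof of Lemma \ref{bilinear-estimate} verbatim up to its penultimate display and then, instead of closing with the fractional integration bound (Lemma \ref{fractional-integration-circle}), invoke the cap-refined estimate \eqref{bound-Q-circle} from Section \ref{preliminaries}. Concretely, I first reduce, by the triangle inequality, to the case $\theta>\theta'$ and $r<r'$ on the supports of $f(r,\theta)g(r',\theta')$; in particular $k\leq k'$ and $\ab{k-k'}=k'-k$, and the remaining three orderings follow by symmetry. Passing to polar coordinates and making the change of variables $(r,r',\theta,\theta')\mapsto(u,s,\varrho)$ with Jacobian $J=(rr'\sin(\theta-\theta'))^{-1}$, then applying Minkowski's inequality, Hausdorff--Young and H\"older's inequality in $\varrho$ exactly as in the proof of Lemma \ref{bilinear-estimate}, yields
\begin{equation*}
\norma{Tf\cdot Tg}_{L^3}\leq C(2^k2^{k'})^{-1/3}(2^k)^{1/3}\Bigl(\int\ab{fg}^{3/2}\ab{\sin(\theta-\theta')}^{-1/2}\,d\theta\,d\theta'\,dr\,dr'\Bigr)^{2/3}.
\end{equation*}

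At this stage, rather than apply Lemma \ref{fractional-integration-circle} (which would recover only Lemma \ref{bilinear-estimate} and discard all cap information), I apply \eqref{bound-Q-circle} to $\ab{f}$ and $\ab{g}$. Since $d\sigma=dr\,d\theta$, the $L^2(\Gamma^2)$ and $L^2_{r,\theta}$ norms agree, and after taking a square root \eqref{bound-Q-circle} gives
\begin{equation*}
\int\ab{fg}^{3/2}\ab{\sin(\theta-\theta')}^{-1/2}\lesssim 2^{(k+k')/4}\norma{f}_{L^2(\Gamma^2)}^{3(2-\delta)/4}\norma{g}_{L^2(\Gamma^2)}^{3(2-\delta)/4}\bigl(\mathbf{S}(f)\mathbf{S}(g)\bigr)^{\delta/2},
\end{equation*}
where I write $\mathbf{S}(f):=\sup_{\cp}\ab{\cp}^{-1/4}\int_\cp\ab{f}^{3/2}\,d\sigma$ for brevity.

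Raising this to the $2/3$ power and multiplying by the prefactor from the first display, the total exponent of $2$ works out to
\[-\tfrac{1}{3}(k+k')+\tfrac{1}{3}k+\tfrac{1}{6}(k+k')=\tfrac{1}{6}(k-k')=-\tfrac{1}{6}\ab{k-k'},\]
which is precisely the decay factor in \eqref{cap-bilinear-inequality}; simultaneously the $L^2(\Gamma^2)$ norms appear with exponent $(2-\delta)/2$ and each $\mathbf{S}$ with exponent $\delta/3$, matching the statement exactly. The only subtlety is the arithmetic linking $\norma{\ab{f}^{3/2}}_{L^{4/3}(dr\,d\theta)}=\norma{f}_{L^2(\Gamma^2)}^{3/2}$ to the exponents appearing in \eqref{bound-Q-circle}; all the real analytic work is already contained in Section \ref{preliminaries}, so no new ideas are required and the ``main obstacle'' is nothing more than this bookkeeping.
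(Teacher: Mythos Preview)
Your argument is correct and is essentially the paper's own proof: the paper simply quotes the intermediate inequality \eqref{bilinear-with-sine} (with $\min(2^k,2^{k'})^{1/3}$ in place of your $(2^k)^{1/3}$, the latter arising from your reduction to $r<r'$), and then applies \eqref{bound-Q-circle} and the same power-of-$2$ bookkeeping you carry out. The only cosmetic difference is that you re-derive \eqref{bilinear-with-sine} rather than cite it.
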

\begin{proof}
Recall from Section \ref{adjoint-fourier}, equation \eqref{bilinear-with-sine}, that we have the inequality
\begin{multline*}
 \norma{Tf\cdot Tg}_{L^3}\leq C (2^k 2^{k'})^{-1/3}\min(2^{k},2^{k'})^{1/3}\\
\cdot\Bigl(\int \ab{f(y)g(y')}^{3/2}\ab{\sin(\te-\te')}^{-1/2}d\te d\te'drdr'\Bigr)^{2/3}.
\end{multline*}

From \eqref{bound-Q-circle}, we obtain
\begin{multline*}
\norma{Tf\cdot Tg}_{L^3}\lesssim (2^k
2^{k'})^{-1/3}\min(2^{k},2^{k'})^{1/3}2^{(k+k')/6}\norma{f}_{L^2(\Gamma^2)}^{(2-\delta)/2}\norma{g}_{L^2(\Gamma^2)}^{(2-\delta)/2}
\\
\cdot\Bigl(\sup\limits_{\cp} \ab{\cp}^{-1/4}\int_\cp \ab{f}^{3/2}d\sigma\Bigr)^{\delta/3}\Bigl(\sup\limits_{\cp}
\ab{\cp}^{-1/4}\int_\cp
\ab{g}^{3/2}d\sigma\Bigr)^{\delta/3},
\end{multline*}
which as in the proof of Lemma \ref{bilinear-estimate} can be rewritten as
\begin{multline*}
\norma{Tf\cdot Tg}_{L^3}\lesssim
2^{-\ab{k-k'}/6}\norma{f}_{L^2(\Gamma^2)}^{(2-\delta)/2}\norma{g}_{L^2(\Gamma^2)}^{(2-\delta)/2}\\
\cdot\Bigl(\sup\limits_{\cp} \ab{\cp}^{-1/4}\int_\cp \ab{f}^{3/2}d\sigma\Bigr)^{\delta/3}\Bigl(\sup\limits_{\cp}
\ab{\cp}^{-1/4}\int_\cp
\ab{g}^{3/2}d\sigma\Bigr)^{\delta/3}.\qedhere
\end{multline*}
\end{proof}

\begin{cor}
\label{cap-decomposition-of-T}
 There exist $C<\infty$ and $\delta\in(0,2)$ with the following property. If $f\in L^2(\Gamma^2)$ and $f_k=f\chi_{\{2^{k-1}\leq
\ab{y}<2^k\}}$, $k\in\Z$, then
\begin{align}
\label{xp-cap-bound}
 \norma{Tf}^2_{L^6(\R^3)}&\leq C \suma{k\in\Z}{}\norma{f_k}_{L^2(\Gamma^2)}^{2-\delta}\Bigl(\sup\limits_{\cp}
\ab{\cp}^{-1/4}\int_\cp
\ab{f_k}^{3/2}d\sigma\Bigr)^{2\delta/3}.
\end{align}
\end{cor}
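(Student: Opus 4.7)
The plan is to derive this corollary as a direct consequence of the bilinear cap estimate from Proposition \ref{cap-bilinear-estimate}, combined with a Schur-type summation argument of the sort already used in the proof of Theorem \ref{adjoint-fourier-restriction}.

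First I would square and expand. Since $\|Tf\|_{L^6}^2=\|(Tf)^2\|_{L^3}$ and $Tf=\sum_k Tf_k$, the triangle inequality in $L^3(\R^3)$ gives
\[
\|Tf\|_{L^6}^2 \;\leq\; \sum_{k,k'\in\Z}\|Tf_k\cdot Tf_{k'}\|_{L^3(\R^3)}.
\]
Each $f_k$ is supported where $2^{k-1}\leq|y|<2^k$, so Proposition \ref{cap-bilinear-estimate} applies to every pair $(f_k,f_{k'})$. Writing
\[
a_k \;=\; \|f_k\|_{L^2(\Gamma^2)}^{(2-\delta)/2}\Bigl(\sup_\cp |\cp|^{-1/4}\int_\cp |f_k|^{3/2}\,d\sigma\Bigr)^{\delta/3},
\]
the proposition yields $\|Tf_k\cdot Tf_{k'}\|_{L^3}\leq C\,2^{-|k-k'|/6}a_k a_{k'}$, so
\[
\|Tf\|_{L^6}^2 \;\leq\; C\sum_{k,k'}2^{-|k-k'|/6}\,a_k a_{k'}.
\]

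Next I would collapse the double sum. The matrix with entries $2^{-|k-k'|/6}$ has uniformly bounded row and column sums, so it acts boundedly on $\ell^2(\Z)$. Concretely, by Cauchy--Schwarz (exactly as at the end of the proof of Theorem \ref{adjoint-fourier-restriction}),
\[
\sum_{k,k'}2^{-|k-k'|/6}a_k a_{k'}
\;\leq\; \Bigl(\sum_{k,k'}2^{-|k-k'|/6}a_k^2\Bigr)^{1/2}\Bigl(\sum_{k,k'}2^{-|k-k'|/6}a_{k'}^2\Bigr)^{1/2}
\;\leq\; C\sum_{k}a_k^2.
\]
Unpacking $a_k^2$ gives
\[
\|Tf\|_{L^6}^2 \;\leq\; C\sum_{k\in\Z}\|f_k\|_{L^2(\Gamma^2)}^{2-\delta}\Bigl(\sup_\cp |\cp|^{-1/4}\int_\cp |f_k|^{3/2}\,d\sigma\Bigr)^{2\delta/3},
\]
which is exactly \eqref{xp-cap-bound}.

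There is no real obstacle here; the whole argument is a packaging step. The analytic content (the fractional integration on the circle, the change of variables used to transfer the $L^3$ norm of a product to an integral involving $|\sin(\theta-\theta')|^{-1/2}$, and the cap-type improvement coming from the bound on $B$ via the Lorentz-space characterization) is already encoded in Proposition \ref{cap-bilinear-estimate}; the only remaining task is the off-diagonal summation, handled by the $2^{-|k-k'|/6}$ decay. The same $\delta$ delivered by Proposition \ref{cap-bilinear-estimate} works throughout, so the corollary follows with that same constant.
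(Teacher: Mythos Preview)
Your proof is correct and follows essentially the same approach as the paper: expand $\norma{Tf}_{L^6}^2=\norma{(Tf)^2}_{L^3}$ bilinearly, apply Proposition \ref{cap-bilinear-estimate} to each term, and then use Cauchy--Schwarz together with the summability of $2^{-\ab{k-k'}/6}$ to collapse the double sum. Your shorthand $a_k$ is exactly the factor appearing in the paper, and the Schur/Cauchy--Schwarz step you carry out is the same one indicated there.
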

\begin{proof}
 We start by writing $\norma{Tf}^2_{L^6}=\norma{Tf\cdot Tf}_{L^3}$ and $Tf=\sum_{k\in\Z} Tf_k$, so the triangle inequality gives
 \[\norma{Tf\cdot Tf}_{L^3}\leq \suma{k,k'}{}\norma{Tf_k\cdot Tf_{k'}}_{L^3}\]
 that together with Proposition \ref{cap-bilinear-estimate} gives 
\begin{multline*}
\norma{Tf}^2_{L^6}\lesssim \suma{k,k'}{}
2^{-\ab{k-k'}/6}\norma{f_k}_{L^2(\Gamma^2)}^{(2-\delta)/2}\norma{f_{k'}}_{L^2(\Gamma^2)}^{(2-\delta)/2}\\
\cdot\Bigl(\sup\limits_{\cp} \ab{\cp}^{-1/4}\int_\cp \ab{f_k}^{3/2}d\sigma\Bigr)^{\delta/3}\Bigl(\sup\limits_{\cp}
\ab{\cp}^{-1/4}\int_\cp \ab{f_{k'}}^{3/2}d\sigma\Bigr)^{\delta/3}.
\end{multline*}
The desired conclusion follows by the Cauchy-Schwarz inequality.
\end{proof}
By using Proposition \ref{cap-bilinear-estimate} instead of Lemma \ref{bilinear-estimate} we can obtain an analog of Proposition 
\ref{improved-exponent}, which is as follows: 
\begin{prop}
\label{dyadic-cap-estimate-for-T}
There exist $C<\infty$ and $\delta\in(0,2)$ with the following property. Let $f\in L^2(\Gamma^2)$ and for $k\in\Z$ let
$f_k(y)=f(y)\chi_{\{2^{k-1}\leq \ab{y}<2^{k}\}}$. Then
 \begin{equation}
  \label{dyadic-cap-estimate-for-T-inequality}
  \norma{Tf}_{L^6(\R^3)}\leq C \Biggl(\suma{k\in\Z}{}\norma{f_k}_{L^2(\Gamma^2)}^{3-3\delta/2}
\biggl(\sup\limits_{\cp} \ab{\cp}^{-1/4}\int_\cp \ab{f_k}^{3/2}d\sigma\biggr)^{\delta}\Biggr)^{1/3}.
 \end{equation}
\end{prop}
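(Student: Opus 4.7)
The plan is to mimic the proof of Proposition \ref{improved-exponent}, replacing Lemma \ref{bilinear-estimate} by the sharper Proposition \ref{cap-bilinear-estimate} and Theorem \ref{adjoint-fourier-restriction} by Corollary \ref{cap-decomposition-of-T} applied to a single dyadic piece. Set, for brevity,
\[
S_k = \sup_{\cp} \ab{\cp}^{-1/4}\int_\cp \ab{f_k}^{3/2}\,d\sigma, \qquad a_k = \norma{f_k}_{L^2(\Gamma^2)}^{(2-\delta)/2}\, S_k^{\delta/3}.
\]
Applying Corollary \ref{cap-decomposition-of-T} to the single dyadic piece $f_i$ (equivalently, using Proposition \ref{cap-bilinear-estimate} with $k=k'=i$ and $f=g=f_i$) yields
\[
\norma{Tf_i}_{L^6(\R^3)}\lesssim \norma{f_i}_{L^2(\Gamma^2)}^{(2-\delta)/2}\, S_i^{\delta/3}= a_i.
\]

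First I would write $\norma{Tf}_{L^6}^3=\norma{Tf\cdot Tf\cdot Tf}_{L^2}$, expand $Tf=\sum_k Tf_k$, and use the triangle inequality to reduce the proof to controlling the triple sum $\sum_{i,j,k}\norma{Tf_i\cdot Tf_j\cdot Tf_k}_{L^2}$. For each triple $(i,j,k)$, symmetry lets us assume $\ab{j-k}=\max(\ab{i-j},\ab{j-k},\ab{k-i})$. Then H\"older's inequality gives
\[
\norma{Tf_i\cdot Tf_j\cdot Tf_k}_{L^2}\leq \norma{Tf_i}_{L^6}\,\norma{Tf_j\cdot Tf_k}_{L^3},
\]
and estimating the first factor by $a_i$ as above and the second by Proposition \ref{cap-bilinear-estimate} yields
\[
\norma{Tf_i\cdot Tf_j\cdot Tf_k}_{L^2}\lesssim 2^{-\ab{j-k}/6}\, a_i a_j a_k.
\]

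Next I would symmetrize the decay factor exactly as in Proposition \ref{improved-exponent}: the maximality of $\ab{j-k}$ gives $\ab{j-k}\geq \tfrac{1}{3}(\ab{i-j}+\ab{j-k}+\ab{k-i})$, so
\[
\norma{Tf}_{L^6}^3\lesssim \suma{i,j,k}{} 2^{-\ab{i-j}/18}2^{-\ab{j-k}/18}2^{-\ab{k-i}/18}\, a_i a_j a_k.
\]
A three-factor H\"older inequality on the triple sum (or three applications of the trivial bound $a_ia_ja_k\leq \tfrac{1}{3}(a_i^3+a_j^3+a_k^3)$ together with the fact that the weight $2^{-\ab{i-j}/18}2^{-\ab{j-k}/18}2^{-\ab{k-i}/18}$ is absolutely summable in the remaining two indices uniformly in the third) collapses the sum and gives $\norma{Tf}_{L^6}^3\lesssim \sum_k a_k^3$. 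Unpacking the definition of $a_k$ gives exactly \eqref{dyadic-cap-estimate-for-T-inequality} after taking cube roots.

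There is no real obstacle here: every ingredient is already in place in Sections \ref{adjoint-fourier}--\ref{section-cap-bound}, and the argument is a mechanical upgrade of Proposition \ref{improved-exponent}. The only point requiring care is the bookkeeping of the exponents of $\norma{f_k}_{L^2}$ and $S_k$ — specifically, that the exponent $(2-\delta)/2$ from the bilinear cap estimate for two factors is consistent with the exponent $(2-\delta)/2$ coming from the single-factor bound $\norma{Tf_i}_{L^6}\lesssim a_i$, so that the three contributions combine symmetrically into $a_ia_ja_k$ and the final cube root produces the exponents $3-3\delta/2$ and $\delta$ stated in the proposition.
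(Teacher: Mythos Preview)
Your proof is correct and follows exactly the approach the paper indicates: it reproduces the argument of Proposition~\ref{improved-exponent} with Proposition~\ref{cap-bilinear-estimate} in place of Lemma~\ref{bilinear-estimate}, which is precisely what the paper says to do. The bookkeeping you flag is indeed the only point to watch, and you have it right: $a_k^3=\norma{f_k}_{L^2}^{3-3\delta/2}S_k^{\delta}$ gives the stated exponents.
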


\begin{prop}[Cap estimate]
 \label{cap-estimate-for-T}
There exist $C<\infty$ and $\delta\in(0,2)$ such that for all $f\in L^2(\Gamma^2)$ the following estimate holds:
\begin{equation}
 \label{cap-estimate-for-T-inequality}
 \norma{Tf}_{L^6(\R^3)}\leq C\norma{f}_{L^2(\Gamma^2)}^{1-\delta/2}\biggl(\sup\limits_{\cp} \ab{\cp}^{-1/4}\int_\cp
\ab{f}^{3/2}d\sigma\biggr)^{\delta/3},
\end{equation}
\end{prop}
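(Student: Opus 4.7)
The plan is to deduce the proposition directly from the dyadic cap estimate of Proposition \ref{dyadic-cap-estimate-for-T} by (i) replacing the localized cap supremum with the global one and pulling it outside the sum, and then (ii) collapsing the remaining $\ell^{3-3\delta/2}$ sum over dyadic annuli against the global $L^2(\Gamma^2)$ norm by a nesting-of-$\ell^p$-spaces argument.

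First, I would apply Proposition \ref{dyadic-cap-estimate-for-T} to $f$ with the dyadic pieces $f_k=f\chi_{\{2^{k-1}\leq \ab{y}<2^k\}}$, obtaining
\[
\norma{Tf}_{L^6(\R^3)}\leq C\Bigl(\suma{k\in\Z}{}\norma{f_k}_{L^2(\Gamma^2)}^{3-3\delta/2}\Bigl(\sup_{\cp} \ab{\cp}^{-1/4}\int_\cp \ab{f_k}^{3/2}d\sigma\Bigr)^{\delta}\Bigr)^{1/3}.
\]
Since $\ab{f_k}\leq \ab{f}$ pointwise, for every cap $\cp$ we have $\int_\cp \ab{f_k}^{3/2}\,d\sigma\leq \int_\cp \ab{f}^{3/2}\,d\sigma$. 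Writing $A:=\sup_{\cp}\ab{\cp}^{-1/4}\int_\cp \ab{f}^{3/2}\,d\sigma$, we can therefore extract $A^{\delta}$ from every term in the $k$-sum, giving
\[
\norma{Tf}_{L^6(\R^3)}\leq C\,A^{\delta/3}\Bigl(\suma{k\in\Z}{}\norma{f_k}_{L^2(\Gamma^2)}^{3-3\delta/2}\Bigr)^{1/3}.
\]

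Next, I reduce $\sum_k \norma{f_k}_{L^2}^{3-3\delta/2}$ to a power of $\norma{f}_{L^2}$. This is where the smallness of $\delta$ matters: provided we choose the universal constant $\delta$ in the earlier cap bounds small enough that $3-3\delta/2\geq 2$, i.e.\ $\delta\leq 2/3$, the containment $\ell^2\hookrightarrow \ell^{3-3\delta/2}$ yields
\[
\suma{k\in\Z}{}\norma{f_k}_{L^2(\Gamma^2)}^{3-3\delta/2}\leq \Bigl(\suma{k\in\Z}{}\norma{f_k}_{L^2(\Gamma^2)}^{2}\Bigr)^{(3-3\delta/2)/2}=\norma{f}_{L^2(\Gamma^2)}^{3-3\delta/2},
\]
where in the last step we used that the $f_k$ have disjoint supports so $\sum_k \norma{f_k}_2^2=\norma{f}_2^2$. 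Raising to the power $1/3$ gives the factor $\norma{f}_{L^2(\Gamma^2)}^{1-\delta/2}$.

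Combining the two bounds produces exactly
\[
\norma{Tf}_{L^6(\R^3)}\leq C\,\norma{f}_{L^2(\Gamma^2)}^{1-\delta/2}\,A^{\delta/3},
\]
which is the desired cap estimate. There is no real obstacle here; the only point requiring care is that the $\delta$ of Proposition \ref{dyadic-cap-estimate-for-T} must be small enough to guarantee $3-3\delta/2\geq 2$ so that the embedding of $\ell^2$ into $\ell^{3-3\delta/2}$ is available—this is harmless since the $\delta$'s produced in Section \ref{preliminaries} are already small, and one can simply shrink it if necessary (the statement only asserts the existence of some $\delta\in(0,2)$).
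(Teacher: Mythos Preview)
Your proof is correct and follows essentially the same approach as the paper: both start from Proposition \ref{dyadic-cap-estimate-for-T}, replace the $f_k$-cap supremum by the global one, and then collapse the dyadic sum under the constraint $\delta\leq 2/3$. The only cosmetic difference is that the paper writes $\norma{f_k}_2^{3-3\delta/2}=\norma{f_k}_2^{2}\,\norma{f_k}_2^{1-3\delta/2}$ and bounds $\norma{f_k}_2^{1-3\delta/2}\leq\norma{f}_2^{1-3\delta/2}$ termwise, whereas you phrase the same step as the embedding $\ell^2\hookrightarrow\ell^{3-3\delta/2}$; these are equivalent.
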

\begin{proof}
 From Proposition \ref{dyadic-cap-estimate-for-T}, we have
\[\norma{Tf}_{L^6}\lesssim \Biggl(\suma{k}{}\norma{f_k}_{L^2}^{3-3\delta/2}
\biggl(\sup\limits_{\cp} \ab{\cp}^{-1/4}\int_\cp \ab{f_k}^{3/2}d\sigma\biggr)^{\delta}\Biggr)^{1/3}.\]
For each $k$, using that $\delta\leq 2/3$ ($\delta$ can be taken as small as desired by changing the corresponding implicit 
constants $C$ in the inequalities), we have
\begin{align*}
 \norma{f_k}_{L^2}^{3-3\delta/2}\biggl(\sup\limits_{\cp} \ab{\cp}^{-1/4}\int_\cp
\ab{f_k}^{3/2}d\sigma\biggr)^{\delta}&=\norma{f_k}_{L^2}^2 \norma{f_k}_{L^2}^{1-3\delta/2}\biggl(\sup\limits_{\cp}
\ab{\cp}^{-1/4}\int_\cp \ab{f_k}^{3/2}d\sigma\biggr)^{\delta}\\
&\leq \norma{f_k}_{L^2}^2 \norma{f}_{L^2}^{1-3\delta/2}\biggl(\sup\limits_{\cp} \ab{\cp}^{-1/4}\int_\cp
\ab{f}^{3/2}d\sigma\biggr)^{\delta}.
\end{align*}
Then, adding over $k$,
\[\norma{Tf}_{L^6}\lesssim \norma{f}_{L^2}^{1-\delta/2}\biggl(\sup\limits_{\cp} \ab{\cp}^{-1/4}\int_\cp
\ab{f}^{3/2}d\sigma\biggr)^{\delta/3}.\qedhere\]
\end{proof}

\section{Using the cap bound}\label{using-cap-bound}
We will prove the analog of \cite{CS}*{Lemma 2.6}.

\begin{lemma}
\label{cap-decomposition}
 For any $\delta>0$ there exist $C_\delta<\infty$ and $\eta_\delta>0$ with the following property. If $f\in L^2(\Gamma^2)$ 
 satisfies $\norma{Tf}_6\geq\delta \mathbf{C}\norma{f}_2$, then there exist a decomposition $f=g+h$ and a cap $\cp$ satisfying
\begin{align}
\label{cond1}
&0\leq \ab{g},\ab{h}\leq \ab{f},\\
\label{cond2}
& g, h \text{ have disjoint supports},\\
\label{cond3}
&\ab{g(x)}\leq C_\delta\norma{f}_2\ab{\cp}^{-1/2}\chi_\cp(x)\; \text{ for all } x,\\
\label{cond4}
&\norma{g}_2\geq \eta_\delta\norma{f}_2.
\end{align}
\end{lemma}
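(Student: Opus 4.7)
The plan is to invert Proposition \ref{cap-estimate-for-T} to produce a cap carrying a definite share of the weighted $L^{3/2}$ mass, and then to use a standard ``truncate at the natural height'' argument to split $f$ into $g$ and $h$. Starting from the hypothesis $\norma{Tf}_6\geq\delta\mathbf{C}\norma{f}_2$ and applying Proposition \ref{cap-estimate-for-T}, I obtain
\[
\sup_{\cp}\ab{\cp}^{-1/4}\int_{\cp}\ab{f}^{3/2}\,d\sigma\;\geq\; c_\delta\norma{f}_2^{3/2},
\]
where $c_\delta>0$ depends only on $\delta$ and on the universal constants in that estimate. Pick a cap $\cp$ which realizes this supremum up to a factor of $2$. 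For a constant $M=M_\delta$ to be chosen below, define
\[
g=f\,\chi_{\cp}\,\chi_{\{f\leq M\norma{f}_2\ab{\cp}^{-1/2}\}},\qquad h=f-g.
\]
Properties \eqref{cond1} and \eqref{cond2} are immediate, and \eqref{cond3} holds with $C_\delta=M_\delta$ by construction.

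Only the lower bound \eqref{cond4} requires work. Split $\cp=\cp_{\leq}\cup\cp_{>}$ according to whether $f\leq M\norma{f}_2\ab{\cp}^{-1/2}$ or not. Chebyshev's inequality in $L^2$ yields $\sigma(\cp_{>})\leq M^{-2}\ab{\cp}$, and hence by H\"older with exponents $4/3$ and $4$,
\[
\int_{\cp_{>}}\ab{f}^{3/2}\,d\sigma\leq\Bigl(\int_{\cp_{>}}\ab{f}^2\,d\sigma\Bigr)^{3/4}\sigma(\cp_{>})^{1/4}\leq M^{-1/2}\norma{f}_2^{3/2}\ab{\cp}^{1/4}.
\]
Choosing $M_\delta$ so that $M_\delta^{-1/2}\leq c_\delta/4$, the cap lower bound forces $\int_{\cp_{\leq}}\ab{f}^{3/2}\,d\sigma\geq (c_\delta/4)\norma{f}_2^{3/2}\ab{\cp}^{1/4}$. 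A second application of H\"older, this time to $g$, which is supported in $\cp_{\leq}$, gives
\[
(c_\delta/4)\norma{f}_2^{3/2}\ab{\cp}^{1/4}\leq\int\ab{g}^{3/2}\,d\sigma\leq\norma{g}_2^{3/2}\ab{\cp}^{1/4},
\]
so $\norma{g}_2\geq(c_\delta/4)^{2/3}\norma{f}_2$, which is \eqref{cond4} with $\eta_\delta=(c_\delta/4)^{2/3}$.

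I do not anticipate a substantive obstacle: the lemma is a routine quantitative extraction from the cap estimate. The one detail requiring care is the calibration of the truncation threshold: $M$ must be chosen purely in terms of $c_\delta$ (equivalently, in terms of $\delta$) so that the high-value tail on $\cp_{>}$ cannot absorb more than half of the $L^{3/2}$ mass on $\cp$, and it is exactly this balancing that fixes the scale $\norma{f}_2\ab{\cp}^{-1/2}$ appearing in \eqref{cond3}.
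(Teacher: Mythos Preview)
Your proof is correct and follows essentially the same approach as the paper: extract a cap via the cap estimate, truncate $f$ on $\cp$ at height proportional to $\norma{f}_2\ab{\cp}^{-1/2}$, show the high part carries small $L^{3/2}$ mass, and use H\"older to convert the remaining $L^{3/2}$ mass into an $L^2$ lower bound for $g$. The only cosmetic difference is that the paper bounds the tail via the pointwise inequality $\ab{h}^{3/2}\leq R^{-1/2}\ab{h}^2$ on $\{\ab{h}\geq R\}$, whereas you use Chebyshev plus H\"older; the two yield the same conclusion.
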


\begin{proof}
For convenience, normalize so that $\norma{f}_{L^2(\Gamma^2)}=1$. By Proposition \ref{cap-estimate-for-T} there exists a cap 
$\cp$ such that 
\[\int_\cp \ab{f}^{3/2}drd\te\geq \tfrac{1}{2}c(\delta)\ab{\cp}^{1/4}.\]

Let $R\geq 1$ and define $E=\{x\in\cp:\ab{f(x)}\leq R\}$. Set $g=f\chi_E$ and $h=f-f\chi_E$. Then $g$ and $h$ have disjoint
supports, 
$g+h=f$, $g$ is supported on $\cp$, and $\norma{g}_\infty\leq R$. Since $\ab{h(x)}\geq R$ for almost every $x\in\cp$ for which
$h(x)\neq 0$ we have
\[\int_\cp \ab{h}^{3/2}\leq R^{-1/2}\int_\cp h^2\leq R^{-1/2}\norma{f}_2^2=R^{-1/2}.\]

If we choose $R$ by setting $R^{-1/2}=\tfrac{1}{4}c(\delta)\ab{\cp}^{1/4}$, then
\[\int_\cp\ab{g}^{3/2}=\int_\cp\ab{f}^{3/2}-\int_\cp\ab{h}^{3/2}\geq \tfrac{1}{4}c(\delta)\ab{\cp}^{1/4}.\]

By H\"{o}lder's inequality, since $g$ is supported on $\cp$,
\[\norma{g}_2\geq \ab{\cp}^{-1/6}\biggl(\int\ab{g}^{3/2}\biggr)^{2/3}\geq c'(\delta)=c'(\delta)\norma{f}_2>0.\qedhere\]
\end{proof}

We note that the conditions $\ab{g(x)}\leq C_\delta\norma{f}_2\ab{\cp}^{-1/2}\chi_\cp(x)$ and 
$\norma{g}_2\geq \eta_\delta\norma{f}_2$ easily imply a lower bound on the $L^1$ norm of $g$.

\begin{lemma}
\label{large-l1}
 Let $g\in L^2(\Gamma^2)$ satisfying $\ab{g(x)}\leq a\ab{\cp}^{-1/2}\chi_\cp(x)$ and $\norma{g}_2\geq b$, for some $a,b>0$ and 
 $\cp\subset\Gamma^2$. Then there is a constant $C=C(a,b)>0$ such that 
\[\norma{g}_{L^1(\Gamma^2)}\geq C\ab{\cp}^{1/2}.\]
\end{lemma}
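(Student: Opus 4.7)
The plan is to use the pointwise bound to pass from the $L^2$ lower bound to an $L^1$ lower bound via the trivial interpolation $\|g\|_2^2 \le \|g\|_\infty\,\|g\|_1$.

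First I would observe that the hypothesis $|g(x)|\le a|\cp|^{-1/2}\chi_{\cp}(x)$ gives immediately the uniform bound $\|g\|_\infty \le a|\cp|^{-1/2}$. Then, writing $|g|^2 = |g|\cdot|g| \le \|g\|_\infty\,|g|$ pointwise and integrating against $\sigma$, I obtain
\[
b^2 \;\le\; \|g\|_{L^2(\Gamma^2)}^2 \;=\; \int_{\Gamma^2} |g|^2\, d\sigma \;\le\; \|g\|_\infty \int_{\Gamma^2} |g|\, d\sigma \;\le\; a\,|\cp|^{-1/2}\,\|g\|_{L^1(\Gamma^2)}.
\]
Rearranging yields $\|g\|_{L^1(\Gamma^2)} \ge (b^2/a)\,|\cp|^{1/2}$, so the conclusion holds with $C = b^2/a$.

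There is no real obstacle here; the lemma is a one-line interpolation identity. The only point to keep in mind is that the support of $g$ lies in $\cp$, which is implicit in the pointwise bound, and that the measure $\sigma$ on $\Gamma^2$ is the one used throughout, so the same measure appears in $\|g\|_1$, $\|g\|_2$, and $|\cp|=\sigma(\cp)$; this consistency is what lets the exponents balance.
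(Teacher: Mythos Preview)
Your proof is correct and is essentially the same as the paper's: both use the elementary interpolation $\|g\|_2^2\le\|g\|_\infty\|g\|_1$ together with $\|g\|_\infty\le a|\cp|^{-1/2}$ to obtain $\|g\|_1\ge (b^2/a)\,|\cp|^{1/2}$. The only cosmetic difference is that the paper first rescales $g$ to $a^{-1}|\cp|^{1/2}g$, which is bounded by $1$, and then uses $\|h\|_2^2\le\|h\|_1$ for such $h$; this is the same inequality written differently.
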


\begin{proof}
The hypotheses on $g$ imply that $\ab{a^{-1}\ab{\cp}^{1/2}g(x)}\leq \chi_\cp(x)\leq 1$ and thus\break
$\norma{a^{-1}\ab{\cp}^{1/2}g}_2^2\leq \norma{a^{-1}\ab{\cp}^{1/2}g}_1$. Therefore
\[\norma{g}_1\geq a^{-1}\ab{\cp}^{1/2}\norma{g}_2^2\geq a^{-1}b^2\ab{\cp}^{1/2}.\qedhere\]
\end{proof}

\section{Using the group of symmetries}\label{lorentz-invariance}
A Lorentz transformation, $L$, in $\R^3$ is an invertible linear map that preserves the bilinear form 
\[A(x,y)=x_1y_1+x_2y_2-x_3y_3,\]
$x=(x_1,x_2,x_3),y=(y_1,y_2,y_3)\in\R^3$, that is,
\[A(x,y)=A(Lx,Ly)\quad\text{ for all }x,y\in\R^3.\]
Examples of Lorentz transformations are $L^t,\,M^t$ and $R_\te$ given next. For $t\in(-1,1)$, we define the linear map 
$L^t:\R^3\to\R^3$ by 
\[L^t(x_1,x_2,x_3)=\biggl(\frac{x_1+tx_3}{\sqrt{1-t^2}},x_2,\frac{x_3+tx_1}{\sqrt{1-t^2}}\biggr).\]
$\{L^t\}_{t\in(-1,1)}$ is a one parameter subgroup of Lorentz transformations. Similarly,
\[M^t(x_1,x_2,x_3)=\biggl(x_1,\frac{x_2+tx_3}{\sqrt{1-t^2}},\frac{x_3+tx_2}{\sqrt{1-t^2}}\biggr)\]
is a Lorentz transformation.

One computes that $L^t$ and $M^t$ preserve the cone for all $t\in(-1,1)$, that is, $L^t(\Gamma^2)=M^t(\Gamma^2)=\Gamma^2$. 
For $\la>0$, we define the dilation $D_\la:\R^3\to\R^3$ by $D_\la(x)=\la x$ that clearly satisfies $D_\la(\Gamma^2)=\Gamma^2$ for
every $\la>0$. For $\te\in[0,2\pi]$, we denote by $R_\te$ the rotation in $\R^3$ by angle $\te$ about the $x_3$-axis
\[R_\te(x_1,x_2,x_3)=(x_1\cos\te-x_2\sin\te,x_1\sin\te+x_2\cos\te,x_3).\]
$R_\te$ preserves the cone for all $0\leq \te\leq 2\pi$.

Associated to $L^t,\,M^t,\,D_\la,\,R_\te$ are the operators $L^{t*},\,M^{t*},\,D_\la^*$ and $R_\te^*$ acting on a function 
$f\in L^2(\Gamma^2)$ by
\begin{equation}
 \label{group-action}
L^{t*}f=f\circ L^t,\; M^{t*}f=f\circ M^t,\; D_{\la}^*f=\la^{1/2}f\circ D_\la,\; R_{\te}^*f=f\circ R_\te,
\end{equation}
where ``$\circ$'' denotes composition. We also define $L_t=\sqrt{1-t^2}L^t=D_{\sqrt{1-t^2}} L^t$ and $L^*_t$ by
\begin{align*}
L_t^*f(x_1,x_2,x_3)&=(1-t^2)^{1/4}f\circ L_t(x_1,x_2,x_3)\\
&=(1-t^2)^{1/4}f(x_1+tx_3,\sqrt{1-t^2}\,x_2,x_3+tx_x).
\end{align*}

The measure $\sigma$ is invariant under the action of Lorentz transformations that preserve the cone, and in fact is the only 
one with that property, up to multiplication by constant; for this we refer to \cite{RS} where the case of the cone in $\R^4$ is
considered. In this paper we only need to know that for every $t\in (-1,1)$, $L^t$ and $M^t$ preserve the measure $\sigma$, and
this can be done directly using the change of variables formula and seeing that the Jacobians work out. We write it in the next
proposition and include the proof for completeness.
\begin{prop}
 For any $t\in(-1,1)$ the linear maps $L^t,\,M^t$ are invertible, preserve $\Gamma^2$, that is,
$L^t(\Gamma^2)=M^t(\Gamma^2)=\Gamma^2$, and preserve $\sigma$, that is, for any $f\in L^1(\Gamma^2)$
\[\int_{\Gamma^2} f\circ L^t d\sigma=\int_{\Gamma^2} f\circ M^t d\sigma=\int_{\Gamma^2} f d\sigma.\]
\end{prop}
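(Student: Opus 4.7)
The plan is to verify the three claims in sequence using elementary linear algebra and the change-of-variables formula. Invertibility is immediate: a direct matrix computation shows $L^{-t}\circ L^t=\mathrm{Id}$ and $M^{-t}\circ M^t=\mathrm{Id}$ whenever $t\in(-1,1)$, so both maps have explicit inverses of the same form within the one-parameter families.

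For preservation of $\Gamma^2$, I would observe that $\Gamma^2$ is the intersection of the null cone $\{x\in\R^3:A(x,x)=0\}$ with the closed half-space $\{x_3\geq 0\}$. Since a Lorentz transformation preserves the quadratic form $A(x,x)$, it preserves the full null cone, and only the sign of the third coordinate needs to be checked. For $x=(y,\ab{y})\in\Gamma^2$ the third coordinate of $L^t x$ equals $(\ab{y}+ty_1)/\sqrt{1-t^2}$, which is nonnegative because $\ab{ty_1}\leq\ab{t}\ab{y}<\ab{y}$; the argument for $M^t$ is identical with $y_2$ in place of $y_1$.

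For invariance of $\sigma$, identify a function $f$ on $\Gamma^2$ with a function on $\R^2$ via $(y,\ab{y})\leftrightarrow y$, so that $d\sigma=dy/\ab{y}$. Let $\tilde y\in\R^2$ denote the projection to the first two coordinates of $L^t(y,\ab{y})$; explicitly $\tilde y_1=(y_1+t\ab{y})/\sqrt{1-t^2}$ and $\tilde y_2=y_2$. A short algebraic computation yields $\tilde y_1^2+\tilde y_2^2=(\ab{y}+ty_1)^2/(1-t^2)$, so that $\ab{\tilde y}$ coincides with the third coordinate of $L^t(y,\ab{y})$ (reconfirming that $L^t$ maps $\Gamma^2$ into itself). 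Computing the $2\times 2$ Jacobian of $y\mapsto\tilde y$ gives
\[\left|\det\frac{\partial\tilde y}{\partial y}\right|=\frac{1+ty_1/\ab{y}}{\sqrt{1-t^2}}=\frac{\ab{y}+ty_1}{\ab{y}\sqrt{1-t^2}}=\frac{\ab{\tilde y}}{\ab{y}},\]
hence $d\tilde y/\ab{\tilde y}=dy/\ab{y}$. The change-of-variables formula then gives $\int f\circ L^t\,d\sigma=\int f\,d\sigma$; the calculation for $M^t$ is identical after interchanging the roles of $y_1$ and $y_2$.

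There is no real obstacle here, only bookkeeping: the nontrivial point is the algebraic coincidence that the Jacobian factor arising from the projection-to-$\R^2$ change of variables cancels exactly the $\ab{y}^{-1}$ weight in the measure $\sigma$, which is of course a reflection of the fact that $\sigma$ is (up to a constant) the unique Lorentz-invariant measure on $\Gamma^2$.
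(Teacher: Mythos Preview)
Your proof is correct and follows essentially the same route as the paper: invertibility via $L^{-t}$, preservation of $\Gamma^2$ by checking the sign of the third coordinate, and invariance of $\sigma$ via the change of variables $y\mapsto\tilde y$ whose Jacobian is exactly $\ab{\tilde y}/\ab{y}$. The only cosmetic differences are that the paper reduces $M^t$ to $L^t$ by conjugating with the permutation $P(x_1,x_2,x_3)=(x_2,x_1,x_3)$ rather than repeating the argument, and verifies $L^t(\Gamma^2)\subseteq\Gamma^2$ by a direct algebraic identity rather than invoking preservation of the quadratic form $A$.
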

\begin{proof}
 Letting $P(x_1,x_2,x_3)=(x_2,x_1,x_3)$ we see that $M^t=P\circ L^t\circ P$ and so it is enough to prove the statements for 
 $L^t$. The inverse of $L^t$ is $L^{-t}$. That $L^t(\Gamma^2)\subseteq \Gamma^2$ follows from the equality
\[\biggl(\frac{x_3+tx_1}{\sqrt{1-t^2}}\biggr)^2=\biggl(\frac{x_1+tx_3}{\sqrt{1-t^2}}\biggr)^2+x_2^2\]
and the inequality
\[\frac{x_3+tx_1}{\sqrt{1-t^2}}\geq 0\]
whenever $x_3^2=x_1^2+x_2^2$ and $x_3\geq 0$. Since the same is true for $L^{-t}$, it follows that $L^t(\Gamma^2)=\Gamma^2$. 
For the invariance of the measure, let $f\in L^1(\Gamma^2)$. We have
\begin{align*}
 \int f\circ L^t(x_1,x_2,x_3)d\sigma(x_1,x_2,x_3)=\int_{\R^2} f\Bigl(\frac{y_1+ty_3}{\sqrt{1-t^2}},y_2,
        \frac{y_3+ty_1}{\sqrt{1-t^2}}\Bigr)\frac{dy_1 dy_2}{\sqrt{y_1^2+y_2^2}}
\end{align*}
where $y_3=\sqrt{y_1^2+y_2^2}$. We use the change of variables $u=(y_1+ty_3)/\sqrt{1-t^2}=
(y_1+t\sqrt{y_1^2+y_2^2})/\sqrt{1-t^2},\, v=y_2$. We note that the Jacobian is
\[\frac{\partial(u,v)}{\partial(y_1,y_2)}=\frac{1}{\sqrt{1-t^2}}\Biggl(1+\frac{ty_1}{\sqrt{y_1^2+y_2^2}}\Biggr),\]
or equivalently
\[\frac{\partial(y_1,y_2)}{\partial(u,v)}=\sqrt{y_1^2+y_2^2}\frac{\sqrt{1-t^2}}{\sqrt{y_1^2+y_2^2}+ty_1}.\]
Now, since $L^t(y_1,y_2,y_3)$ lies in $\Gamma^2$, we also have
\[\sqrt{u^2+v^2}=\frac{y_3+ty_1}{\sqrt{1-t^2}}.\]
It follows that the Jacobian factor can be rewritten as
\[\frac{\partial(y_1,y_2)}{\partial(u,v)}=\frac{\sqrt{y_1^2+y_2^2}}{\sqrt{u^2+v^2}}.\]
Therefore, letting $w=\sqrt{u^2+v^2}$,
\[\int_{\R^2} f\biggl(\frac{y_1+ty_3}{\sqrt{1-t^2}},y_2,\frac{y_3+ty_1}{\sqrt{1-t^2}}\biggr)\frac{dy_1 dy_2}{\sqrt{y_1^2+y_2^2}}=
  \int_{\R^2} f(u,v,w)\frac{du dv}{\sqrt{u^2+v^2}}\]
or equivalently,
\[\int f\circ L^t d\sigma=\int fd\sigma.\qedhere\]
\end{proof}

The Lorentz invariance of the measure implies invariance of the $L^2$ norm, for $f\in L^2(\Gamma^2)$
\begin{equation}
 \label{invariance-norm}
 \norma{L^{t*}f}_{L^2(\sigma)}=\norma{M^{t*}f}_{L^2(\sigma)}=\norma{D_{\la}^*f}_{L^2(\sigma)}=\norma{R_\te^*f}_{L^2(\sigma)}
  =\norma{L_t^*f}_{L^2(\sigma)}=\norma{f}_{L^2(\sigma)}.
\end{equation}

Using the Lorentz invariance of $\sigma$, it is direct to check that for all $p\in[1,\infty]$ the $L^p$ norm of $Tf$ does not 
change under Lorentz transformations in the sense that 
\begin{equation}
 \label{lorentz-invariance-T}
 \norma{T(f\circ L)}_{L^p(\R^3)}=\norma{Tf}_{L^p(\R^3)}.
\end{equation}
Indeed, writing
\[Tf(x,t)=\int e^{ix\cdot y}e^{ity'}f(y,y') d\sigma(y,y')=\int e^{iA((x,-t),(y,y'))}f(y,y') d\sigma(y,y'),\]
we obtain
\begin{align*}
 T(f\circ L)(x,t)&=\int e^{iA((x,-t),(y,y'))}f\circ L(y,y') d\sigma(y,y')\\
 &=\int e^{iA(L(x,-t),L(y,y'))}f\circ L(y,y') d\sigma(y,y')\\
 &=\int e^{iA(L(x,-t),(y,y'))}f(y,y') d\sigma(y,y').
\end{align*}
Since for a Lorentz transformation $L$, $\ab{\det L}=1$, \eqref{lorentz-invariance-T} follows by change of variables in 
the case $p\in[1,\infty)$. When $p=\infty$, \eqref{lorentz-invariance-T} follows since $L$ is invertible.

We can use the group of symmetries to transform a cap into a set comparable to a cap of large measure, that is, we have the
following lemma: 
\begin{lemma}
\label{wider-cap}
 Let $\cp\subset[\frac{1}{2},1]\times [0,2\pi]$ be a cap in $\Gamma^2$. Then there exist $t\in[0,1)$ and $\te\in[0,2\pi]$ such
that 
 $L_t^{-1}R_\te^{-1}(\cp)$ satisfies
 \begin{equation}
  \label{big-measure}
  \sigma(L_t^{-1}R_\te^{-1}(\cp))\geq \tfrac{1}{2}\quad\text{ and }\quad 
L_t^{-1}R_\te^{-1}(\cp)\subseteq[\tfrac{1}{4},1]\times[0,2\pi].
 \end{equation}
\end{lemma}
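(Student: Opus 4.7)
My plan is to first reduce to a clean normal form using a rotation, then pick the boost parameter so that the measure condition is saturated, and finally verify the radial containment by an explicit formula for how $L_t^{-1}$ moves points on $\Gamma^2$.

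Since $\cp \subset [1/2,1]\times[0,2\pi]$ is a cap, the dyadic index in Definition \ref{definition-of-cap} is $k=0$, so $\cp=[1/2,1]\times J$ for some interval $J$, and $\sigma(\cp)=|J|/2$. I split on the size of $|J|$. If $|J|\geq 1$, then already $\sigma(\cp)\geq 1/2$ and we can take $t=0$, $\te=0$; the identity map satisfies both conclusions trivially. The interesting case is $|J|<1$. First I choose $\te$ to be the midpoint of $J$ so that $R_\te^{-1}(\cp)=[1/2,1]\times J'$ with $J'$ centered at angle $0$, i.e.\ $J'=[-|J|/2,|J|/2]$. Then I set $t=\sqrt{1-|J|^2}\in[0,1)$.

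For the measure condition, I use the decomposition $L_t^{-1}=L^{-t}\circ D_{1/\sqrt{1-t^2}}$ from Section \ref{lorentz-invariance}: Lorentz invariance gives $\sigma(L^{-t}A)=\sigma(A)$, while $D_\la$ scales $\sigma$ by $\la$, so $\sigma(L_t^{-1}A)=\sigma(A)/\sqrt{1-t^2}$. Applied to $A=R_\te^{-1}(\cp)$ (whose measure equals $|J|/2$ since $R_\te$ preserves $\sigma$), this gives $\sigma(L_t^{-1}R_\te^{-1}(\cp))=(|J|/2)/|J|=1/2$, as desired.

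For the containment, I apply $L_t^{-1}$ to a generic point $(r\cos\phi, r\sin\phi, r)$ with $r\in[1/2,1]$ and $\phi\in[-|J|/2,|J|/2]$. A direct calculation using the formulas for $D_{1/\sqrt{1-t^2}}$ and $L^{-t}$ gives new $x_3$-coordinate (equivalently, new radius)
\[
r'=\frac{r(1-t\cos\phi)}{1-t^2}.
\]
The lower bound follows from $\cos\phi\leq 1$: $r'\geq r/(1+t)\geq (1/2)/2=1/4$. For the upper bound, $\cos\phi\geq\cos(|J|/2)$ and $r\leq 1$ reduce the claim $r'\leq 1$ to $t\leq\cos(|J|/2)$, i.e.\ $\sqrt{1-|J|^2}\leq\cos(|J|/2)$, or after squaring $\sin^2(|J|/2)\leq|J|^2$. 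This follows from the elementary bound $\sin x\leq x$ for $x\geq 0$.

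The main obstacle is that the measure and containment conditions pull against each other: increasing $t$ increases the measure but threatens to push the radii outside $[1/4,1]$. The key observation is that this tension is resolved precisely because we are free to use the rotation to align the angular support of $\cp$ with the axis of the boost ($\phi=0$), which is the direction in which the radial distortion of $L_t^{-1}$ is minimized; the choice $t=\sqrt{1-|J|^2}$ then exactly saturates the measure condition while the containment holds by the elementary inequality $\sin(|J|/2)\leq |J|/2\leq |J|$.
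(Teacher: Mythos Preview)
Your proof is correct and follows essentially the same approach as the paper: rotate to center the angular interval at $0$, choose the boost parameter $t$ so the measure condition is met, then verify the radial containment via the explicit formula $r'=r(1-t\cos\phi)/(1-t^2)$ and the inequality $\sin x\le x$. The only difference is cosmetic: you pick $t=\sqrt{1-|J|^2}$ so that the image has measure exactly $1/2$, whereas the paper takes $t=\sqrt{1-(|J|/2)^2}$ to get measure $1$; your containment check is also slightly cleaner since you only bound the new radius $r'$, while the paper redundantly bounds the $x_1$ and $x_2$ coordinates as well.
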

\begin{proof}
Let $\te\in[0,2\pi]$ be such that $R_\te^{-1}\cp=[\frac{1}{2},1]\times [-\eps,\eps]$, for some $\eps\in[0,\pi]$. The measure of
$\cp$ 
is $\sigma(\cp)=\ab{\cp}=\eps$, and so we can assume $\eps< \frac{1}{2}$, otherwise we are done by taking $t=0$. 

The inverse of $L_t$ is $L_t^{-1}=(1-t^2)^{-1/2}L^{-t}$ and the measure of $L_t^{-1}R_\te^{-1}(\cp)$ is 
\begin{align*}
 \sigma(L_t^{-1}R_\te^{-1}(\cp))&=\sigma((1-t^2)^{-1/2}L^{-t}R_\te^{-1}(\cp))\\
 &=\sigma((L^t)^{-1}R_\te^{-1}((1-t^2)^{-1/2}\cp))\\
 &=\sigma((1-t^2)^{-1/2}\cp)=(1-t^2)^{-1/2}\sigma(\cp),
\end{align*}
where we used the invariance of $\sigma$ under Lorentz transformations and that $\sigma(\la\cp)=\la\sigma(\cp)$ for any 
$\la\geq 0$.

Let $t$ be such that $\sigma(L_t^{-1}R_\te^{-1}(\cp))=1$, that is, $t= (1-\ab{\cp}^2)^{1/2}=(1-\eps^2)^{1/2}$. Now we write
$R_\te^{-1}\cp=\{(r\cos\varphi,r\sin\varphi,r):\frac{1}{2}\leq r\leq 1, -\eps\leq\varphi\leq \eps\}$ , so that
\[ L_t^{-1}R_\te^{-1}(\cp)=\biggl\{r(1-t^2)^{-\frac{1}{2}}\biggl(\frac{\cos\vphi-t}{(1-t^2)^{1/2}},\sin\vphi,\frac{1-t\cos\vphi}{
(1-t^2)^ { 1/2 }}\biggr):\frac{1}{2}\leq r\leq 1,-\eps\leq \vphi\leq\eps\biggr\}.\]
Note that for $\frac{1}{2}\leq r\leq 1$ and $-\eps\leq \vphi\leq \eps$ we have
\[r(1-t^2)^{-1/2}\frac{\ab{\cos\vphi-t}}{(1-t^2)^{1/2}}\leq \frac{1-t}{1-t^2}=\frac{1}{1+t}\leq 1\]
because $\cos\varphi\geq\cos\eps\geq t$. Similarly
\[r(1-t^2)^{-1/2}\ab{\sin\vphi}\leq \frac{\sin\eps}{\eps}\leq 1\]
and
\[\frac{1}{4}\leq \frac{1}{2(1+t)}=\frac{1-t}{2(1-t^2)}\leq r\frac{1-t\cos\vphi}{1-t^2}\leq 1.\]

Then $t=(1-\eps^2)^{1/2}$ gives the desired conclusion.
\end{proof}

\begin{cor}
\label{appplying-lorentz-group}
 Let $\{f_n\}_{n\in\N}$ be a sequence of nonnegative functions in $L^2(\Gamma^2)$ with $\norma{f_n}_{L^2(\Gamma^2)}=1$ 
 and such that there exists a cap $\cp_n\subset [1/2,1]\times [0,2\pi]$ with the property
 \begin{equation}
  \label{lower-bound-on-cap}
  \int_{\cp_n}f_n d\sigma\geq c\ab{\cp_n}^{1/2},
 \end{equation}
 where $c>0$ is independent of $n$. Then there exist sequences $\{t_n\}_{n\in\N}\subset[0,1)$ and
$\{\te_n\}_{n\in\N}\subset[0,2\pi]$ such that $\{L_{t_n}^*R_{\te_n}^*f_n\}_{n\in\N}$ satisfies that every weak limit in
$L^2(\Gamma^2)$ is nonzero.
\end{cor}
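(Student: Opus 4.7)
The plan is to test every weak subsequential limit against the indicator $\chi_K$ of the fixed compact region $K:=[1/4,1]\times[0,2\pi]\subset\Gamma^2$. Since $\sigma(K)<\infty$ we have $\chi_K\in L^2(\Gamma^2)$, and since $\norma{L_{t_n}^*R_{\te_n}^*f_n}_{L^2(\sigma)}=\norma{f_n}_{L^2(\sigma)}=1$ by \eqref{invariance-norm}, the sequence $\tilde f_n:=L_{t_n}^*R_{\te_n}^*f_n$ has weak subsequential limits by Banach--Alaoglu. It will therefore suffice to produce $t_n\in[0,1)$ and $\te_n\in[0,2\pi]$ for which
\[\int_K \tilde f_n\,d\sigma\geq c_0>0\]
uniformly in $n$; then any weak limit $\tilde f$ along a subsequence satisfies $\int_K \tilde f\,d\sigma\geq c_0$ and so cannot vanish.

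The first step is to apply Lemma \ref{wider-cap} to each cap $\cp_n\subset[1/2,1]\times[0,2\pi]$, obtaining parameters $t_n,\te_n$ so that $\tilde\cp_n:=L_{t_n}^{-1}R_{\te_n}^{-1}(\cp_n)$ lies in $K$ and has $\sigma(\tilde\cp_n)\geq 1/2$. Using that $R_{\te}$ preserves $\sigma$ and that $\sigma(L_t(A))=\sqrt{1-t^2}\,\sigma(A)$ for every Borel set $A\subset\Gamma^2$, this last bound rewrites as the identity
\[(1-t_n^2)^{-1/2}\ab{\cp_n}=\sigma(\tilde\cp_n)\geq \tfrac{1}{2},\]
which is precisely what will cancel the cap-width factor in \eqref{lower-bound-on-cap}.

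The second step is to carry out the change of variables in the pairing. Starting from $\tilde f_n(x)=(1-t_n^2)^{1/4}f_n(R_{\te_n}L_{t_n}(x))$ and using the pushforwards $(R_{\te_n})_*\sigma=\sigma$ and $(L_{t_n})_*\sigma=(1-t_n^2)^{-1/2}\sigma$, I would obtain
\[\int_K \tilde f_n\,d\sigma=(1-t_n^2)^{-1/4}\int_{R_{\te_n}L_{t_n}(K)}f_n\,d\sigma.\]
Since $\tilde\cp_n\subseteq K$ is equivalent to $\cp_n\subseteq R_{\te_n}L_{t_n}(K)$, the nonnegativity of $f_n$ and the hypothesis \eqref{lower-bound-on-cap} give
\[\int_K \tilde f_n\,d\sigma\geq c\,(1-t_n^2)^{-1/4}\ab{\cp_n}^{1/2}=c\bigl((1-t_n^2)^{-1/2}\ab{\cp_n}\bigr)^{1/2}\geq c/\sqrt{2},\]
the last inequality coming from the identity in the preceding paragraph. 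Combined with the weak-convergence argument of the first paragraph, this completes the proof.

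The main subtle point, and really the reason for working with the rescaled operator $L_t^*$ rather than $L^{t*}$ alone, is the exact matching between the Jacobian factor $(1-t_n^2)^{-1/4}$ and the cap-width factor $\ab{\cp_n}^{1/2}$: individually each can blow up or vanish as $t_n\to 1^{-}$, but the calibration of $t_n$ in Lemma \ref{wider-cap} forces $(1-t_n^2)^{-1/2}\ab{\cp_n}\geq 1/2$, so their geometric mean is bounded away from zero. Everything else reduces to bookkeeping of pushforwards under the symmetries $R_{\te_n}$ and $L_{t_n}$.
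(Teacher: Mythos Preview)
Your proof is correct and follows essentially the same approach as the paper: you apply Lemma~\ref{wider-cap} to select $t_n,\te_n$, use the change-of-variables identity $\int_K L_{t_n}^*R_{\te_n}^*f_n\,d\sigma=(1-t_n^2)^{-1/4}\int_{R_{\te_n}L_{t_n}(K)}f_n\,d\sigma$ (the paper records this as \eqref{l1-norm-under-symmetry}), combine it with \eqref{lower-bound-on-cap} and the measure relation $(1-t_n^2)^{-1/2}\ab{\cp_n}=\sigma(\tilde\cp_n)\geq 1/2$ to get the uniform lower bound $c/\sqrt{2}$, and then test any weak limit against $\chi_K$. Your write-up is slightly more explicit about the pushforward bookkeeping, but the argument is the same.
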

\begin{proof}
 $L_t^*$ and $R_\te^*$ preserve the $L^2(\Gamma^2)$ norm thus $\norma{L_t^*R_\te^*f_n}_{L^2(\Gamma^2)}=1$ for any $t\in[0,1)$ and
$\te\in[0,2\pi]$. It follows that for any sequences $\{t_n\}_{n\in\N}\subset[0,1)$ and $\{\te_n\}_{n\in\N}\subset[0,2\pi]$, the
set of $L^2$-weak limits of $\{L_{t_n}^*R_{\te_n}^*f_n\}_{n\in\N}$ is nonempty. 
 
 Under the action of $L_t^*R_\te^*$, the integral of a function $f$ changes according to
 \begin{equation}
  \label{l1-norm-under-symmetry}
  \int L_t^*R_\te^*f d\sigma=(1-t^2)^{-1/4}\int R_\te^*f d\sigma=(1-t^2)^{-1/4}\int f d\sigma.
 \end{equation}
 
 By Lemma \ref{wider-cap}, for each $n$, there exist $t_n\in[0,1)$ and $\te_n\in[0,2\pi]$ such that 
 \[\sigma(L_{t_n}^{-1}R_{\te_n}^{-1}(\cp_n))\geq \tfrac{1}{2}\quad\text{ and
}\quad L_{t_n}^{-1}R_{\te_n}^{-1}(\cp_n)\subseteq[\tfrac{1}{4},1]\times[0,2\pi].\]
 
 Suppose that for a subsequence of $\{f_n\}_{n\in\N}$ (that we also call $\{f_n\}_{n\in\N}$)
$L_{t_n}^*R_{\te_n}^*f_n\rightharpoonup f$, as $n\to\infty$, for some
$f\in L^2(\Gamma^2)$.
Using \eqref{lower-bound-on-cap} and \eqref{l1-norm-under-symmetry}, we have
 \begin{multline}
  \label{lower-bound-pull-back}
  \int_{[1/4,1]\times[0,2\pi]} L_{t_n}^*R_{\te_n}^*f_n d\sigma\geq (1-t_n^2)^{-1/4}\int_{\cp_n} f_n d\sigma\\
\geq c(1-t_n^2)^{-1/4}\ab{\cp_n}^{1/2}=c(\sigma(L_{t_n}^{-1}R_{\te_n}^{-1}(\cp_n)))^{1/2}\geq \frac{c}{\sqrt{2}}.
 \end{multline}

  From \eqref{lower-bound-pull-back} and the weak convergence, it follows that
 \[\int_{[1/4,2]\times[0,2\pi]}f d\sigma\geq \frac{c}{\sqrt{2}}>0\]
 and so $f\neq 0$.
\end{proof}

\section{The proof of the precompactness}\label{fvv-argument}

In this section, we prove that up to symmetries of the cone, an extremizing sequence is precompact. 

We will give two proofs. We will start with a proof based on \cite{FVV,FVV2}; the other proof is based on 
\cite{CS} with a modification coming from \cite{FVV} and is sketched in a remark at the end of this section.

Recall that $\textbf{C}$, given in \eqref{best-constant}, denotes
the best constant in inequality \eqref{restriction-cone-with-c}, in other words, $\mathbf{C}=\norma{T}$, the norm of the
operator $T$ as a map from $L^2(\Gamma^2)$ to $L^6(\R^3)$.

We start by stating \cite{FVV}*{Proposition 1.1} for the cone.

\begin{prop}\cite{FVV}.
\label{cone-hilbert-lemma-l2}
 Let $T:L^2(\Gamma^2,\sigma)\to L^6(\R^3)$ be the Fourier extension operator defined in \eqref{fourier-extension-operator}. 
 Let $\{f_n\}_{n\in\N}\subset L^2(\Gamma^2)$ such that:
\begin{itemize}
 \item[(i)] $\lim\limits_{n\to\infty}\norma{f_n}_2= 1$;
\item[(ii)] $\lim\limits_{n\to\infty}\norma{Tf_n}_{L^6(\R^3)}=\mathbf{C}$;
\item[(iii)] $f_n\rightharpoonup f\neq 0$;
\item[(iv)] $Tf_n\to Tf\text{ a.e. in }\R^3$.
\end{itemize}
Then $f_n\to f$ in $L^2(\Gamma^2)$, in particular $\norma{f}_2=1$ and $\norma{Tf}_{L^6(\R^3)}=\mathbf{C}$.
\end{prop}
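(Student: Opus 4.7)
The approach I would take is to combine the Brézis--Lieb lemma in $L^6(\R^3)$ with the Hilbert-space decomposition of the $L^2$ norm, and then exploit strict concavity via the sharpness of the constant $\mathbf{C}$. Write $a = \|f\|_2^2$; hypothesis (iii) gives $a > 0$, while weak lower semicontinuity of the norm together with (i) gives $a \leq 1$. The entire proof reduces to showing $a = 1$, after which strong convergence is automatic in a Hilbert space.

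First I would use the parallelogram identity: since $L^2(\Gamma^2)$ is a Hilbert space and $f_n \rightharpoonup f$,
$$\|f_n\|_2^2 = \|f_n - f\|_2^2 + \|f\|_2^2 + 2\,\mathrm{Re}\langle f_n - f, f\rangle = \|f_n - f\|_2^2 + a + o(1),$$
so by (i), $\|f_n - f\|_2^2 \to 1 - a$. In parallel, I would apply the Brézis--Lieb lemma to the sequence $Tf_n$ in $L^6(\R^3)$: hypothesis (iv) provides the required a.e.\ convergence $Tf_n \to Tf$, while Theorem \ref{adjoint-fourier-restriction} together with (i) provides a uniform bound on $\|Tf_n\|_6$. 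Using linearity of $T$, this yields
$$\|Tf_n\|_6^6 = \|Tf\|_6^6 + \|T(f_n-f)\|_6^6 + o(1).$$

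Next I would estimate each of the two pieces on the right by means of $\mathbf{C}$:
$$\|Tf\|_6^6 \leq \mathbf{C}^6 a^3, \qquad \|T(f_n-f)\|_6^6 \leq \mathbf{C}^6 \|f_n - f\|_2^6.$$
Passing to the limit and invoking (ii),
$$\mathbf{C}^6 = \lim_{n\to\infty} \|Tf_n\|_6^6 \leq \mathbf{C}^6\bigl(a^3 + (1-a)^3\bigr).$$
Since $\varphi(a) := a^3 + (1-a)^3 = 1 - 3a(1-a) < 1$ for every $a \in (0,1)$, and $a > 0$ by (iii), this forces $a = 1$, i.e.\ $\|f\|_2 = 1$. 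Combined with $f_n \rightharpoonup f$ and $\|f_n\|_2 \to \|f\|_2$, another application of the parallelogram identity gives $\|f_n - f\|_2 \to 0$, and $\|Tf\|_6 = \mathbf{C}$ follows from continuity of $T$.

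The only nontrivial ingredient is the Brézis--Lieb decomposition in $L^6$, whose hypotheses here are exactly (iv) and the uniform extension bound coming from Theorem \ref{adjoint-fourier-restriction}; everything else is routine Hilbert-space manipulation plus the elementary fact that $a^3 + (1-a)^3 < 1$ on the open interval $(0,1)$. I expect no real obstacle beyond verifying the hypotheses of Brézis--Lieb, which are built into the statement of the proposition.
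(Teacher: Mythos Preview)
Your argument is correct and is precisely the Fanelli--Vega--Visciglia strategy the paper invokes: the Hilbert-space identity $\norma{f_n}_2^2=\norma{f_n-f}_2^2+\norma{f}_2^2+o(1)$, the Br\'ezis--Lieb decomposition of $\norma{Tf_n}_6^6$, and the strict-concavity fact that $a^3+(1-a)^3<1$ on $(0,1)$ forcing $\norma{f}_2=1$. The paper does not reprove this proposition but cites \cite{FVV}; its proof of the closely related Proposition~\ref{hilbert-lemma-l2-cone} follows exactly the same skeleton (arriving at the identical equation $(1-\norma{f}_2^2)^3+\norma{f}_2^6=1$), with additional truncation bookkeeping only because hypothesis (iv) there is different.
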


We have changed slightly condition (i) in \cite{FVV}*{Proposition 1.1} from $\norma{f_n}_2= 1$ to
$\lim_{n\to\infty}\norma{f_n}_2= 1$, but the proposition as stated here is easily shown to be equivalent to the one in
\cite{FVV} by considering $f_n/\norma{f_n}_2$. Note that an extremizing sequence $\{f_n\}_{n\in\N}$ as in Definition
\ref{def-ext-sequence} satisfies (i) and (ii) in the previous proposition.

We now restate the precompactness theorem, Theorem \ref{main-precompactness}, in a
more precise way,

\begin{teo}
  \label{precompactness}
Let $\{f_n\}_{n\in\N}$ be an extremizing sequence for \eqref{restriction-cone-with-c} of nonnegative functions in $L^2(\Gamma^2)$.
Then there exist sequences $\{t_n\}_{n\in \N}\subset (-1,1)$, $\{\te_n\}_{n\in\N}\subset[0,2\pi]$ and
$\{\la_n\}_{n\in\N}\subset(0,\infty)$ such that $\{L_{t_n}^*R_{\te_n}^*D_{\la_n}^*f_n\}_{n\in\N}$ is precompact, that is, any
subsequence has a convergent sub-subsequence in $L^2(\Gamma^2)$.
\end{teo}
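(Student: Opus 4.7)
The overall strategy is to extract a dominant cap profile from each $f_n$ via the cap estimate of Proposition \ref{cap-estimate-for-T}, use the symmetries of the cone to normalize this cap to a bounded region of controlled measure, pass to a subsequence with a nonzero weak $L^2$-limit, and finally invoke the FVV-type Proposition \ref{cone-hilbert-lemma-l2} to upgrade weak convergence to strong convergence.

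Concretely, since $\norma{Tf_n}_6\to\mathbf{C}$ with $\norma{f_n}_2\leq 1$, for all sufficiently large $n$ the hypothesis of Lemma \ref{cap-decomposition} is satisfied with (say) $\delta=1/2$, producing caps $\cp_n=[2^{k_n-1},2^{k_n}]\times J_n$ and decompositions $f_n=g_n+h_n$ satisfying \eqref{cond1}--\eqref{cond4} with constants uniform in $n$. Lemma \ref{large-l1} then yields $\int_{\cp_n}f_n\,d\sigma\geq c\ab{\cp_n}^{1/2}$ with $c>0$ independent of $n$. I then normalize the cap in three steps: the dilation $D_{\lambda_n}$ with $\lambda_n=2^{k_n}$ sends the radial extent of $\cp_n$ into $[1/2,1]$; a rotation $R_{\theta_n}$ centers the angular part, producing a cap of the form $[1/2,1]\times[-\eps_n,\eps_n]$; and the Lorentz boost $L_{t_n}$ with $t_n=(1-\eps_n^2)^{1/2}$ given by Lemma \ref{wider-cap} widens this to a cap of $\sigma$-measure at least $1/2$ contained in $[1/4,1]\times[0,2\pi]$. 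Setting $\tilde f_n:=L_{t_n}^*R_{\theta_n}^*D_{\lambda_n}^*f_n$, the invariance relations \eqref{invariance-norm} and \eqref{lorentz-invariance-T} (together with the analogous dilation invariance) show that $\{\tilde f_n\}$ remains an extremizing sequence with $\norma{\tilde f_n}_2\to 1$, and Corollary \ref{appplying-lorentz-group} guarantees that any $L^2$-weak limit $f$ of $\{\tilde f_n\}$ is nonzero. Passing to a subsequence, $\tilde f_n\rightharpoonup f\neq 0$ in $L^2(\Gamma^2)$.

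With hypotheses (i), (ii), (iii) of Proposition \ref{cone-hilbert-lemma-l2} in place, the remaining task, and the \emph{main obstacle}, is verifying (iv): $T\tilde f_n\to Tf$ almost everywhere along a further sub-subsequence. The difficulty is that the test kernel $y\mapsto e^{i(x\cdot y+t\ab{y})}$ does not lie in $L^2(\sigma)$ (its integral over $\{\ab{y}>1\}$ diverges), so pointwise convergence does not follow automatically from weak $L^2$-convergence. The plan is to exploit the cap decomposition. Because $L_t^*$ scales the $L^\infty$-norm by $(1-t^2)^{1/4}$ while scaling $\sigma$-measures of caps by $(1-t^2)^{-1/2}$, the transformed dominant part $\tilde g_n$ is uniformly bounded in $L^\infty$ and supported in the compact cap $\tilde\cp_n\subset[1/4,1]\times[0,2\pi]$. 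On this bounded support $y\mapsto e^{i(x\cdot y+t\ab{y})}\chi_{\{1/4\leq\ab{y}\leq 1\}}$ does belong to $L^2(\sigma)$, so along the subsequence along which $\tilde g_n\rightharpoonup g$ weakly in $L^2$ we obtain pointwise convergence $T\tilde g_n(x,t)\to Tg(x,t)$ everywhere. The residual $\tilde h_n$ is then controlled either by iterating the cap extraction (peeling off finitely many cap profiles in the spirit of a profile decomposition until the remainder has $\norma{T\cdot}_6/\norma{\cdot}_2$ ratio bounded away from $\mathbf{C}$, which forces its mass to vanish), or by combining the uniform $L^6$-boundedness of $T\tilde h_n$ with the weak convergence and a diagonalization to obtain a.e. convergence along a sub-subsequence. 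Once (iv) is verified, Proposition \ref{cone-hilbert-lemma-l2} gives $\tilde f_n\to f$ in $L^2(\Gamma^2)$, which is exactly the precompactness conclusion of Theorem \ref{precompactness}.
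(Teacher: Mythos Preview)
Your overall architecture coincides with the paper's primary proof: extract a cap via Lemma \ref{cap-decomposition}, normalize by dilation, rotation, and the boost of Lemma \ref{wider-cap}, invoke Corollary \ref{appplying-lorentz-group} to secure a nonzero weak limit, and then feed everything into Proposition \ref{cone-hilbert-lemma-l2}. Up to the point where you must verify hypothesis (iv), your outline is essentially the paper's, and correct.

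The gap is precisely in your treatment of (iv). Neither of the two mechanisms you propose actually produces a.e.\ convergence of $T\tilde f_n$. Your second option (``uniform $L^6$-boundedness of $T\tilde h_n$ plus weak convergence plus diagonalization'') does not yield pointwise convergence: a uniformly $L^6$-bounded sequence with a weak limit need not have any a.e.\ convergent subsequence. Your first option (iterate the cap extraction until the remainder has ratio $\norma{T\cdot}_6/\norma{\cdot}_2$ bounded away from $\mathbf{C}$, ``which forces its mass to vanish'') is also not right as stated: a small ratio does not force small $L^2$ norm. More importantly, even granting a decomposition $\tilde f_n=\sum_{\nu\leq N} f_\nu^{(n)}+G_N^{(n)}$ with $\norma{G_N^{(n)}}_2$ small, the intermediate pieces $f_\nu^{(n)}$ are supported on caps $\cp_\nu^{(n)}$ that have no a priori reason to sit inside a fixed bounded region of $\Gamma^2$; without that, you cannot run your ``compactly supported kernel'' argument on them. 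This is exactly what the Christ--Shao machinery in Section \ref{CS-argument} supplies via Lemma \ref{large-caps-are-close} and Proposition \ref{decay-control-delta-extremal}, and it is a nontrivial step you have not accounted for.

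The paper closes this gap differently in its primary proof: it proves a separate Proposition \ref{from-fvv2} showing that for \emph{any} sequence $u_n\rightharpoonup u$ in $L^2(\Gamma^2)$ one has $Tu_{n_k}\to Tu$ a.e.\ along a subsequence. The argument passes to the equivalent picture $e^{it\sqrt{-\Delta}}v_n$ with $v_n\in\dot H^{1/2}(\R^2)$, uses that $e^{it\sqrt{-\Delta}}$ is an isometry on $\dot H^{1/2}$ to get weak convergence at each fixed $t$, applies Rellich compactness ($\dot H^{1/2}\hookrightarrow L^2_{\mathrm{loc}}$ compactly) to get strong $L^2_{\mathrm{loc}}$ convergence in $x$ for each $t$, and then integrates in $t$ via dominated convergence. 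This local compactness input is the missing idea in your plan; once you have it, no further cap iteration is needed.
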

\begin{proof}
Since $\{f_n\}_{n\in\N}$  is an extremizing sequence, for all $n$ large enough $\norma{Tf_n}_6\geq
\mathbf{C}\norma{f_n}_2/2$. By Lemma \ref{cap-decomposition} with $\delta=\frac{1}{2}$ there exists $C<\infty$ and $\eta>0$, a
decomposition $f_n=g_n+h_n$ and a cap $\cp_n$ satisfying \eqref{cond1}-\eqref{cond4}. Using that
$\norma{f_n}_{L^2}\to1$, as $n\to\infty$, and Lemma \ref{large-l1} for $g_n$ gives
\[\norma{g_n}_{L^1(\Gamma^2)}\geq C\ab{\cp_n}^{1/2},\]
where $C$ is independent of $n$.

Now there exists $\{\la_n\}_{n\in\N}\subset (0,\infty)$ such that $\la_n^{-1}\cp_n\subset [\frac{1}{2},1]\times [0,2\pi]$ and 
$\la_n^{-1}\cp_n$ is a cap as in Definition \ref{definition-of-cap}. By dilation invariance $\{D_{\la_n}^*f_n\}_{n\in\N}$ is also
an extremizing sequence, with $\norma{D_{\la_n}^*f_n}_2=\norma{f_n}_2$.  The decomposition for $f_n$ gives a decomposition for
$D_{\la_n}^*f_n$, $D_{\la_n}^*f_n=D_{\la_n}^*g_n+D_{\la_n}^*h_n$, and
\[\int_{\la_n^{-1}\cp_n} D_{\la_n}^*f_n d\sigma\geq \norma{D_{\la_n}^*g_n}_{L^1(\Gamma^2)}\geq C\ab{\la_n^{-1} \cp_n}^{1/2}.\]
We now apply Corollary \ref{appplying-lorentz-group} to $\{D_{\la_n}^*f_n\}_{n\in\N}$ to obtain sequences
$\{t_n\}_{n\in\N}\subset[0,1)$ 
and $\{\te_n\}_{n\in\N}\subset[0,2\pi]$ such that every weak limit of $\{L_{t_n}^*R_{\te_n}^*D_{\la_n}^*f_n\}_{n\in\N}$ in
$L^2(\Gamma^2)$ is nonzero.

In view of Proposition \ref{cone-hilbert-lemma-l2}, the theorem is proved if we show that, after passing to a subsequence, if
$L_{t_n}^*R_{\te_n}^*D_{\la_n}^*f_n\rightharpoonup f$, as $n\to\infty$, then $TL_{t_n}^*R_{\te_n}^*D_{\la_n}^*f_n\to Tf$ a.e. in
$\R^3$. We will do this by using the following proposition:
\end{proof}

\begin{prop}
\label{from-fvv2}
 Let $\{u_n\}_{n\in\N}$ be a uniformly bounded sequence in $L^2(\Gamma^2)$, that is, $\sup_n\norma{u_n}_2=:c<\infty$. 
 Suppose there exists $u\in L^2(\Gamma^2)$ such that $u_n\rightharpoonup u$ as $n\to\infty$. Then, there exists a subsequence
$\{u_{n_k}\}_{k\in\N}$ such that $Tu{_{n_k}}\to Tu$ almost everywhere in $\R^3$.
\end{prop}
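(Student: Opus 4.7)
PLAN: The strategy is to reduce the desired almost-everywhere convergence to strong local $L^2$ convergence, which in turn is a compactness statement for the extension operator $T$ restricted to compact subsets of $\R^3$.

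The first step is a frequency-space truncation. For each integer $R \geq 1$, let $A_R = \{y \in \R^2 : R^{-1} \leq |y| \leq R\}$ and set $u_n^R = u_n\chi_{A_R}$, $u^R = u\chi_{A_R}$. Since multiplication by $\chi_{A_R} \in L^\infty$ is weakly continuous on $L^2(\sigma)$, one has $u_n^R \rightharpoonup u^R$ for each $R$. Because $\sigma(A_R) < \infty$, for every $(x,t) \in \R^3$ the function $y \mapsto \chi_{A_R}(y) e^{-i(x\cdot y + t|y|)}$ belongs to $L^2(\sigma)$; testing the weak convergence against it yields pointwise convergence $Tu_n^R(x,t) \to Tu^R(x,t)$. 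Differentiation under the integral on the bounded set $A_R$ followed by Cauchy--Schwarz produces a uniform Lipschitz bound $\|\nabla_{x,t} Tu_n^R\|_\infty \leq R\,\sigma(A_R)^{1/2}\,\|u_n\|_{L^2(\sigma)} \leq C(R)$, so Arzel\`a--Ascoli provides a subsequence along which $Tu_n^R \to Tu^R$ uniformly on compact subsets of $\R^3$. Diagonalizing over $R = 1, 2, 3, \dots$ yields a single subsequence, still denoted $\{u_n\}$, along which this uniform convergence holds for every $R$.

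The second step treats the tails $T(u_n - u_n^R)$. The Strichartz estimate \eqref{restriction-cone-with-c} bounds $\|T(u_n - u_n^R)\|_{L^6(\R^3)} \leq 2Cc$, so $\{Tu_n\}$ is bounded in $L^6_{\mathrm{loc}}$, but this alone does not upgrade the pointwise convergence of the truncations to convergence of the full sequence. What one really needs is that for every compact $K \subset \R^3$ the map $u \mapsto Tu|_K$ is compact from $L^2(\sigma)$ to $L^2(K)$. Granting this, the assumed weak $L^2(\sigma)$ convergence upgrades to strong $L^2(K)$ convergence $Tu_n \to Tu$, and a further subsequence converges almost everywhere on $K$; a diagonal extraction over an exhaustion of $\R^3$ by compact sets then yields a subsequence with $Tu_{n_k} \to Tu$ a.e.\ in $\R^3$.

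The main obstacle is precisely this local compactness. A direct Riesz--Fr\'echet--Kolmogorov verification fails, because translation-equicontinuity of $\{Tu : \|u\|_{L^2(\sigma)} \leq 1\}$ in $L^2(\R^3)$ does not hold uniformly (high-frequency $u$ produce $Tu$ with arbitrarily rapid phase oscillations). The standard substitute, which underlies \cite{FVV2}, is a local smoothing estimate of the form $\|Tu\|_{L^2(K)} \leq C_K \|(1+|y|)^{-\varepsilon}u\|_{L^2(\sigma)}$ for some $\varepsilon > 0$, exploiting the non-stationary behaviour of the phase $x\cdot y + t|y|$ on bounded $(x,t)$ when $|y|$ is large. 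Such a bound gives the uniform smallness of the tails $\|T(u_n - u_n^R)\|_{L^2(K)}$ as $R \to \infty$, which combined with the Arzel\`a--Ascoli convergence of the truncated pieces from the first step yields the strong $L^2_{\mathrm{loc}}$ convergence needed to extract the a.e.-convergent subsequence.
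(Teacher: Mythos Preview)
The key step you rely on—the local smoothing bound $\norma{Tu}_{L^2(K)}\leq C_K\norma{(1+|y|)^{-\varepsilon}u}_{L^2(\sigma)}$—is not justified by the reasoning you give, and as stated it does not close the argument. The phase $\phi(y)=x\cdot y+t|y|$ is homogeneous of degree $1$ in $y$, so $\nabla_y\phi=x+t\,y/|y|$ has magnitude at most $|x|+|t|$, \emph{bounded} on $K$ regardless of $|y|$: there is no non-stationary phase gain at high frequency, and integration by parts in $y$ buys nothing. Furthermore, the weight $(1+|y|)^{-\varepsilon}$ is $\approx 1$ on the low-frequency part $\{|y|<R^{-1}\}$ of the tail, so even granting the estimate you have not shown $\norma{T(u_n-u_n^R)}_{L^2(K)}\to 0$ uniformly in $n$. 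Both gaps are in fact easily repaired, but by different mechanisms than the one you propose: the low-frequency tail is small in $L^\infty$ since $\sigma(\{|y|<R^{-1}\})=2\pi/R$ and Cauchy--Schwarz applies; the high-frequency tail is small in $L^2(K)$ because, writing $\hat v=u|y|^{-1}$ so that $Tu=(2\pi)^2e^{it\sqrt{-\Delta}}v$, the $L^2_x$-isometry of the propagator gives $\norma{Tu}_{L^2(\R^2\times(-T_0,T_0))}^2\leq CT_0\norma{v}_{L^2}^2=CT_0\int|u|^2|y|^{-2}dy\leq CT_0R^{-1}\norma{u}_{L^2(\sigma)}^2$ for $u$ supported on $|y|>R$. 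The gain comes from the weight $|y|^{-1}$ already built into $\sigma$, not from oscillation.

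The paper's proof takes a different route that bypasses the frequency decomposition entirely. With $\hat v_n=u_n|y|^{-1}$ one has $v_n\rightharpoonup v$ in $\dot H^{1/2}(\R^2)$. For each fixed $t$ the propagator $e^{it\sqrt{-\Delta}}$ is an $\dot H^{1/2}$-isometry, so $e^{it\sqrt{-\Delta}}v_n\rightharpoonup e^{it\sqrt{-\Delta}}v$ weakly in $\dot H^{1/2}$; the Rellich compact embedding $\dot H^{1/2}(\R^2)\hookrightarrow L^2_{\mathrm{loc}}(\R^2)$ then forces strong $L^2(B(0,R))$ convergence. The map $t\mapsto\norma{e^{it\sqrt{-\Delta}}(v_n-v)}_{L^2(B(0,R))}^2$ is dominated by a constant (via $\dot H^{1/2}\hookrightarrow L^4$ and H\"older), so dominated convergence in $t$ yields $Tu_n\to Tu$ in $L^2$ on every bounded space-time box, hence a.e.\ along a diagonal subsequence. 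This is Rellich compactness applied time-slice by time-slice, not a local smoothing estimate; your attribution of the latter to \cite{FVV2} is a mischaracterization.
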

\begin{proof}
 The proof follows \cite{FVV2}*{Proof of Theorem 1.1}. We record it here for the completeness. We start by defining $v_n(y)$ by
its Fourier transform
\[\hat v_n(y)=u_n(y)\ab{y}^{-1},\]
and $\hat v(y)=u(y)\ab{y}^{-1}$.

Since $\norma{u_n}_{L^2(\Gamma^2)}^2=\int_{\R^2}\ab{u_n(y)}^2\ab{y}^{-1}dy\leq c^2$, we see that $\norma{v_n}_{\dot
H^{1/2}(\R^2)}^2=\break\int_{\R^2}\ab{\hat v_n(y)}^2\ab{y}dy\leq c^2$. The operator $T$ applied to $u_n$ equals
$(2\pi)^2e^{it\sqrt{-\Delta}}v_n$. Fix
$t\in \R$, by the continuity of $e^{it\sqrt{-\Delta}}$ in $\dot H^{1/2}(\R^2)$, we have 
\[e^{it\sqrt{-\Delta}}v_n\rightharpoonup e^{it\sqrt{-\Delta}}v\]
weakly in $\dot H^{1/2}(\R^2)$, as $n\to\infty$. Then, by the Rellich Theorem \cite{AN}*{Theorem 1.5}, for any $R>0$
\[e^{it\sqrt{-\Delta}}v_n\to e^{it\sqrt{-\Delta}}v\]
strongly in $L^2(B(0,R))$, as $n\to\infty$. Denote by
\[F_n(t):=\int_{\ab{x}<R}\abs{e^{it\sqrt{-\Delta}}(v_n-v)}^2dx=\norma{e^{it\sqrt{-\Delta}}(v_n-v)}_{L^2(B(0,R))}^2.\]
By H\"older's inequality and Sobolev embedding, $\dot H^{1/2}(\R^2)\subset L^4(\R^2)$, we obtain
\[F_n(t)\leq CR\norma{e^{it\sqrt{-\Delta}}(v_n-v)}_{\dot H^{1/2}(\R^2)}^2\leq 2CR,\]
consequently, by the Fubini and dominated convergence Theorems we have that
\[\int_{-R}^R F_n(t)dt=\int_{-R}^{R}\int_{\ab{x}<R}\abs{e^{it\sqrt{-\Delta}}(v_n-v)}^2dxdt\to 0\]
as $n\to \infty$. This implies that, up to a subsequence,
\[e^{it\sqrt{-\Delta}}(v_n-v)\to 0\quad\text{a.e. in }B(0,R)\times (-R,R).\]
Repeating the argument on a discrete sequence of radii $R_n$ such that $R_n\to\infty$, as $n\to\infty$ we conclude, by a 
diagonal argument, that there exists a subsequence $v_{n_k}$ of $v_n$ such that
\[e^{it\sqrt{-\Delta}}(v_{n_k}-v)(x)\to 0\quad\text{a.e. for }(x,t)\in\R^2\times \R,\]
or equivalently, in terms of the sequence $\{u_n\}_{n\in\N}$,
\[Tu_{n_k}-Tu\to 0\quad\text{a.e. in }\R^3.\qedhere\]
\end{proof}
This concludes the proof of Theorem \ref{precompactness}. 

\begin{remark}
To end this section we want to give an alternative proof of Theorem \ref{main-precompactness} that uses the
Christ-Shao concentration-compactness argument \cite{CS} to gain control over extremizing sequences. We will give a brief sketch
of the argument, indicating the results from \cite{CS} that we need but we will not go into the details of their proofs as they
can be adapted to the case of the cone without much effort (for the details, we refer the interested reader to the version of this
work available on the arXiv, \url{http://arxiv.org}). 

We have already obtained the analogs of the results of \cite{CS}*{Section 6}, namely Proposition \ref{cap-estimate-for-T} and
Lemma \ref{cap-decomposition}

From \cite{CS}*{Section 7}, we need analogs of Lemmas 7.1 and 7.5. To obtain a result as \cite{CS}*{Lemma 7.5} we define a
metric space associated to the set of caps.

Let $\mathcal M$ be the set of all caps of $\Gamma^2$ as in Definition \ref{definition-of-cap}, modulo the equivalence relation
$\cp\sim\cp'$ if there exists $k\in \Z$ such that $\cp,\cp'\subseteq[2^{k-1},2^k]\times [0,2\pi]$. The equivalent class of a cap
$\cp=[2^{k-1},2^k]\times J$ is $[\cp]=\{[2^{k-1},2^k]\times I: I\subseteq[0,2\pi] \text{ and }I \text{ is an interval}\}$. 

We make $\mathcal M$ into a metric space by defining the distance between the equivalent classes of $\cp,\,\cp'$ by
 \[\varrho([\cp],[\cp'])=\ab{k-k'},\]
where $\cp=[2^{k-1},2^k]\times J$ and $\cp'=[2^{k'-1},2^{k'}]\times J'$. 

The equivalent of \cite{CS}*{Lemma 7.5} is the bilinear estimate in Lemma \ref{bilinear-estimate}, as it implies that for caps
$\cp,\,\cp'\subseteq\Gamma^2$ the following estimate holds with a universal constant $C<\infty$:
\[\norma{T\chi_\cp\cdot T\chi_{\cp'}}_{L^3(\R^3)}\leq C2^{-\varrho([\cp],[\cp'])/6}\ab{\cp}^{1/2}\ab{\cp'}^{1/2}.\]

We now move to \cite{CS}*{Section 8}, the decomposition algorithm. Note that the decomposition algorithm
does not depend on the specific manifold we are dealing with, it just requires Lemma \ref{cap-decomposition}. Useful properties
can be obtained from the decomposition algorithm for functions which are close to be an extremizer for
\eqref{restriction-cone-with-c} and they appear in \cite{CS}*{Sections 8 and 9}. 

The main proposition we obtain from the Christ-Shao argument is the equivalent of \cite{CS}*{Proposition 2.7} whose
proof is contained in \cite{CS}*{Section 10}.

\begin{prop}
 \label{decay-control-delta-extremal}
There exists a function $\Theta:[1,\infty)\to(0,\infty)$ satisfying $\Theta(R)\to 0$ as $R\to\infty$ with the following property. 
For any $\eps>0$, there exists $\delta>0$ such that any nonnegative function $f\in L^2(\Gamma^2)$ satisfying 
\[\norma{f}_{L^2(\Gamma^2)}=1\;\text{ and }\;\norma{Tf}_{L^6(\R^3)}\geq (1-\delta)\mathbf{C}\norma{f}_{L^2(\Gamma^2)}\]
may be decomposed as $f=F+G$ where $F,G$ are
nonnegative with disjoint supports, $\norma{G}_{L^2(\Gamma^2)}<\eps$,
and there exists $k\in\Z$ such that 
\begin{equation}
\label{decay-at-infinity}
 \int_{\ab{y}<2^{k}R^{-1}}\ab{F(y)}^2d\sigma(y)+\int_{\ab{y}>2^{k}R}\ab{F(y)}^2d\sigma(y)\leq \Theta(R)\quad\text{for all } R\geq
1.
\end{equation}
\end{prop}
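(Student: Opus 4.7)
My plan is to apply the decomposition algorithm to $f$ and use Lemma~\ref{large-caps-are-close} to cluster the summands with significant $L^2$-norm around a single cap $\cp_{\nu^*}$, setting $k$ to be its dyadic scale; everything else is pushed into $G$.

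Given $\eps>0$, the remark following Lemma~\ref{analog-8.3} supplies $N=N(\eps)$ such that any $\delta$-nearly extremal $f$ with $\delta\le\delta_1(\eps)$ satisfies $\norma{G_{N+1}}_2<\eps/3$. Set $\eps'=\eps/(3\sqrt{N+1})$ and further shrink $\delta$ to $\delta(\eps)\le\min(\delta_1(\eps),\delta(\eps'))$ so that Lemma~\ref{large-caps-are-close} applies at scale $\eps'$, yielding a clustering constant $\la=\la(\eps')$. Pick $\nu^*\le N$ maximizing $\norma{f_\nu}_2$; since $\sum_{\nu\le N}\norma{f_\nu}_2^2\ge 1-\eps^2/9$ and there are at most $N+1$ summands, $\norma{f_{\nu^*}}_2\ge 1/\sqrt{2(N+1)}\ge\eps'$. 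Let $k=k_{\nu^*}$ be the dyadic scale of $\cp_{\nu^*}$, set $S=\{\nu\le N:\norma{f_\nu}_2\ge\eps'\}$, and define
\[
F=\sum_{\nu\in S}f_\nu,\qquad G=f-F=G_{N+1}+\sum_{\nu\le N,\,\nu\notin S}f_\nu.
\]
By disjointness of the cap supports, $\norma{G}_2^2\le\norma{G_{N+1}}_2^2+(N+1)\eps'^2<\eps^2/9+\eps^2/9<\eps^2$. By Lemma~\ref{large-caps-are-close} applied to each pair $(\nu^*,\nu)$ with $\nu\in S$, every cap $\cp_\nu$ with $\nu\in S$ lies within metric distance $\la$ of $\cp_{\nu^*}$, so $F$ is supported in $[2^{k-\la-1},2^{k+\la}]\times[0,2\pi]$ and the tail integral $\int_{|y|<2^kR^{-1}}|F|^2\,d\sigma+\int_{|y|>2^kR}|F|^2\,d\sigma$ vanishes identically for $R\ge 2^{\la+1}$ and is trivially at most $\norma{F}_2^2\le 1$ for smaller $R$.

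The principal obstacle is producing a \emph{universal} majorant $\Theta(R)$ independent of $\eps$: the bound just sketched depends on $\eps$ through the clustering constant $\la(\eps')$, which tends to infinity as $\eps\to 0$. Overcoming this requires a quantitative version of Lemma~\ref{large-caps-are-close} in which one tracks explicitly how the clustering distance $\la(\eta)$ depends on $\eta$. Rerunning the proof of Lemma~\ref{large-caps-are-close} with explicit bilinear decay constants yields an upper bound of the form $\la(\eta)\le C\log(1/\eta)$ (up to polynomial factors in $\log(1/\eta)$), so that the inverse threshold $\eta(L):=\sup\{\eta>0:\la(\eta)\le L\}$ obeys $\eta(L)=O(2^{-cL})$ for a universal $c>0$. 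Summing $\norma{f_\nu}_2^2<\eta(\log_2 R)^2$ over the at most $N+1$ summands with $\varrho(\cp_\nu,\cp_{\nu^*})>\log_2 R$, and carefully balancing $N(\eps)$ against the exponential decay of $\eta$, then gives a universal tail bound of the form $\Theta(R)=\mathcal{O}(R^{-c'})$. Producing this quantitative refinement of the clustering lemma, and verifying that the resulting estimate closes uniformly in $\eps$, is the technical heart of the argument.
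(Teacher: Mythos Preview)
Your setup via the decomposition algorithm and Lemma~\ref{large-caps-are-close} is exactly right, and you have correctly identified the obstacle: the clustering distance $\la(\eps')$ depends on $\eps$, so the naive bound on the tail of $F$ is not uniform. But your proposed remedy --- extracting a quantitative estimate $\la(\eta)\le C\log(1/\eta)$ by ``rerunning'' the proof of Lemma~\ref{large-caps-are-close} --- is where the argument breaks down. That proof produces $\la$ of order $M_\eps$ times a logarithm, where $M_\eps$ is the number of steps the decomposition algorithm needs before $\norma{G_\nu}_2$ drops below $\eps^3$. The size of $M_\eps$ is governed by the rate $\gamma_\nu\to 0$ in Lemma~\ref{analog-8.3}, which is never made explicit; there is no reason it should be polynomial in $1/\eps$, so the logarithmic bound you assert is unsubstantiated. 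The subsequent step, ``carefully balancing $N(\eps)$ against the exponential decay of $\eta$,'' then has no foundation.

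The paper avoids quantitative control of $\la(\eta)$ entirely by a two–stage argument. First it proves a weaker lemma in which $\delta$ is allowed to depend on \emph{both} $\eps$ and an auxiliary upper cutoff $\bar R$, and the decay estimate is only required for $R\in[1,\bar R]$. In that setting one constructs $\Theta$ once and for all, using only the qualitative facts that $\la(\eta)<\infty$ for each $\eta>0$ and that the $L^2$ tail $\gamma(\eta)\to 0$ as $\eta\to 0$: one simply chooses $\eta(R)\to 0$ slowly enough that $\la(\eta(R))\le\log_2 R$, and sets $\Theta(R)=\gamma(\eta(R))^2$. Crucially, $\Theta$ is fixed before $\eps$ or $\bar R$ enter; the dependence on $\bar R$ is absorbed into $\delta$. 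Second, one upgrades to the full proposition by a redecomposition trick: given $\eps$, pick $\bar R=\bar R(\eps)$ so that $\Theta(\bar R)\le \eps^2/2$, apply the weaker lemma, and then move the portion of $F$ outside $\{2^k\bar R^{-1}\le|y|\le 2^k\bar R\}$ into $G$. This costs at most $\Theta(\bar R)^{1/2}\le\eps$ in $\norma{G}_2$, and the new $F$ is compactly supported at scale $\bar R$, so the decay estimate holds trivially for $R>\bar R$ and by the weaker lemma for $R\le\bar R$. No quantitative input on $\la$ is needed anywhere.
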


The last ingredient we need is the proposition below whose proof can be obtained by a slight modification of 
\cite{FVV}*{Proof of Proposition 1.1}.

\begin{prop}
\label{modified-cone-hilbert-lemma-l2}
 Let $T:L^2(\Gamma^2,\sigma)\to L^6(\R^3)$ be the Fourier extension operator defined in \eqref{fourier-extension-operator}. Let
$\{f_n\}_{n\in\N}\subset L^2(\Gamma^2)$
such that:
\begin{itemize}
 \item[(i)] $\lim\limits_{n\to\infty}\norma{f_n}_2=1$;
\item[(ii)] $\lim\limits_{n\to\infty}\norma{Tf_n}_{L^6(\R^3)}=\mathbf{C}$;
\item[(iii)] $f_n\rightharpoonup f\neq 0$;
\item[(iv)] $\sup\limits_{n\in \N}\norma{f_n}_{L^2(\{\ab{y}\geq R\})}\leq \Theta(R)$, where $\Theta(R)\to 0$ as $R\to\infty$.
\end{itemize}
Then $f_n\to f$ in $L^2(\Gamma^2)$, in particular $\norma{f}_2=1$ and $\norma{Tf}_{L^6(\R^3)}=\mathbf{C}$.
\end{prop}

Proposition \ref{modified-cone-hilbert-lemma-l2} is just as Proposition \ref{cone-hilbert-lemma-l2} except for condition (iv).

\begin{proof}[Proof (Sketch of the alternative proof of Theorem \ref{precompactness})]
 Let $\{f_n\}_{n\in \N}$ be an extremizing sequence of nonnegative functions. We start as in the proof of Theorem
\ref{precompactness} by using Lemma \ref{cap-decomposition}, Lemma \ref{wider-cap} and Corollary \ref{appplying-lorentz-group} to
obtain sequences $\{\la_n\}_{n\in\N}$, $\{t_n\}_{n\in\N}$ and $\{\te_n\}_{n\in\N}$ such that $\tilde
f_n:=L_{t_n}^*R_{\te_n}^*D_{\la_n}^*f_n$ satisfies
\begin{equation}
\label{fn-tilde}
\norma{\tilde f_n\chi_{\{\frac{1}{4}\leq \ab{y}\leq 1\}}}_2> c\;\text{ and }\;\int \tilde f_n\chi_{\{\frac{1}{4}\leq
\ab{y}\leq 1\}}d\sigma(y)>c,
\end{equation}
for all $n$ with a constant $c>0$ independent of $n$.

We now apply Proposition \ref{decay-control-delta-extremal} to
$\{\tilde f_n/\norma{\tilde f_n}_2\}_{n\in\N}$ with $\eps_n=1/n$, $n\geq 1$, to  
obtain a subsequence of $\{\tilde f_n\}_{n\in\N}$ (that we also call $\{\tilde f_n\}_{n\in\N}$ ), that satisfies the following.
Each $\tilde f_n$ can be
decomposed as $\tilde f_n=F_n+G_n$, with $F_n,G_n$ nonnegative with disjoint supports, $\norma{G_n}_2<\frac{1}{n}$ and $F_n$
satisfies \eqref{decay-at-infinity} for certain $k=k_n\in\Z$. 

We see that as $\{\tilde f_n\}_{n\in\N}$ is an extremizing sequence of
nonnegative functions for \eqref{restriction-cone-with-c}, so is  $\{F_n\}_{n\in\N}$, and we
claim it satisfies the hypotheses of Proposition \ref{modified-cone-hilbert-lemma-l2}, after passing to a subsequence if
necessary.

From \eqref{fn-tilde}, it follows that for all $n$ large enough $F_n$ satisfies
\begin{equation}
\label{properties-of-F}
\norma{F_n\chi_{\{\frac{1}{4}\leq\ab{y}\leq 1\}}}_2> \frac{c}{2}\,\text{ and }\,\int F_n\chi_{\{\frac{1}{4}\leq
\ab{y}\leq 1\}} d\sigma(y)>\frac{c}{2}.
\end{equation}
The first inequality in \eqref{properties-of-F} together with the $L^2$-decay estimate \eqref{decay-at-infinity} imply that
$\{k_n\}_{n\in\N}$ is a bounded sequence. After passing to a subsequence, $F_n\rightharpoonup F$ for some $F\in L^2(\Gamma^2)$ and
$F\neq 0$ since the $F_n$'s satisfy the
second inequality in \eqref{properties-of-F}. Therefore $\{F_n\}_{n\in\N}$ satisfies all the hypotheses of Proposition
\ref{modified-cone-hilbert-lemma-l2}
and thus $F_n\to F$ in $L^2(\Gamma^2)$. Therefore $\tilde f_n\to F$ in $L^2(\Gamma^2)$ which shows that $\{f_n\}_{n\in\N}$ is
precompact up to symmetries of the cone as needed.
\end{proof}
\end{remark}

\section{On convergence of extremizing sequences}

In this section, we prove Theorem \ref{better-precompactness}. We start with a general discussion.

Let $(X,\mathcal B,\mu)$ be a measure space and let $G$ be a group acting on $L^2(X)$, with an action that preserves the
$L^2$
norm, that is $\norma{g^*f}_{L^2(X)}=\norma{f}_{L^2(X)}$ for all $g\in G$ and $f\in L^2(X)$. For an element $f\in L^2(X)$, we
consider its orbit under $G$,\break $G(f):=\{g^*f:g\in G\}$.
 \begin{prop}
\label{precompactness-to-convergence}
  Let $f\in L^2(X)$ and $\{f_n\}_{n\in\N}$ a sequence in $L^2(X)$ with the property that every subsequence has an $L^2$-convergent
subsequence whose limit lies on $G(f)$. Then there exists a sequence $\{g_n\}_{n\in\N}\subset G$ such that $g_n^*f_n\to f$  in
$L^2(X)$, as $n\to \infty$.
 \end{prop}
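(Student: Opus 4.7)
The plan is to argue by contradiction, using the group structure (in particular the fact that each $g^{*}$ is an invertible $L^{2}$-isometry, with inverse $(g^{-1})^{*}$) to ``undo'' the orbit element that appears as a subsequential limit.

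First I would reformulate the desired conclusion as the statement that $\inf_{g\in G}\norma{g^{*}f_{n}-f}_{L^{2}(X)}\to 0$ as $n\to\infty$. Once this is established, the sequence $\{g_{n}\}_{n\in\N}$ may be produced by choosing, for each $n$, an element $g_{n}\in G$ with $\norma{g_{n}^{*}f_{n}-f}_{L^{2}(X)}\leq \inf_{g\in G}\norma{g^{*}f_{n}-f}_{L^{2}(X)}+1/n$.

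To prove the infimum tends to zero, suppose not. Then there exist $\eps>0$ and a subsequence $\{f_{n_{k}}\}_{k\in\N}$ such that
\[\inf_{g\in G}\norma{g^{*}f_{n_{k}}-f}_{L^{2}(X)}\geq \eps\quad\text{for every }k.\]
By the hypothesis, $\{f_{n_{k}}\}_{k\in\N}$ has a further subsequence $\{f_{n_{k_{j}}}\}_{j\in\N}$ converging in $L^{2}(X)$ to some element of the orbit, say $h^{*}f$ with $h\in G$. Applying the isometry $(h^{-1})^{*}$ preserves $L^{2}$-convergence, and since the action is a group action one has $(h^{-1})^{*}(h^{*}f)=f$, so
\[\norma{(h^{-1})^{*}f_{n_{k_{j}}}-f}_{L^{2}(X)}=\norma{(h^{-1})^{*}(f_{n_{k_{j}}}-h^{*}f)}_{L^{2}(X)}=\norma{f_{n_{k_{j}}}-h^{*}f}_{L^{2}(X)}\xrightarrow[j\to\infty]{}0.\]
Choosing $g=h^{-1}\in G$ in the infimum therefore gives $\inf_{g\in G}\norma{g^{*}f_{n_{k_{j}}}-f}_{L^{2}(X)}\to 0$, contradicting the lower bound $\eps$.

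There is no real obstacle here: the argument relies only on the fact that each $g^{*}$ is an invertible $L^{2}$-isometry and that $G$ acts as a group, both of which are built into the hypotheses. The only point to be mildly careful about is the passage from the subsequential convergence hypothesis to a uniform statement about the whole sequence, which is exactly what the contradiction argument above accomplishes.
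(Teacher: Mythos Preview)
Your proof is correct and essentially the same as the paper's: both choose $g_n\in G$ with $\norma{g_n^*f_n-f}_{L^2}\leq \inf_{g\in G}\norma{g^*f_n-f}_{L^2}+1/n$, then use the hypothesis to pass to a subsequence converging to some $h^*f$ and exploit the identity $\norma{(h^{-1})^*f_n-f}_{L^2}=\norma{f_n-h^*f}_{L^2}$. The only cosmetic difference is that you phrase the final step as a contradiction argument on the infimum, whereas the paper applies the ``every subsequence has a sub-subsequence converging to $f$'' criterion directly to $g_n^*f_n$.
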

\begin{proof}
For each $n$, let $g_n\in G$ be such that
\[\norma{g_n^*f_n-f}_{L^2(X)}\leq \inf\limits_{g\in G}\norma{g^*f_n-f}_{L^2(X)}+\frac{1}{n}.\]

We show that $\{g_n^*f_n\}_{n\in\N}$ converges to $f$ by showing that every subsequence has a further subsequence that converges
to $f$.
Take a subsequence of $\{g_{n}^*f_{n}\}_{n\in\N}$ (that we also call $\{g_{n}^*f_{n}\}_{n\in\N}$). By hypothesis,
$\{f_{n}\}_{n\in\N}$ has a convergent subsequence (that
we also call $\{f_{n}\}_{n\in\N}$) to an element in $G(f)$. That is $f_n\to g^* f$, as $n\to\infty$, for some $g\in G$. By the
definition of
$g_n$ and the
invariance of the norm under the action of $G$, we obtain
\[\norma{g_n^*f_n-f}_{L^2(X)}\leq \norma{(g^{-1})^{*}f_n-f}_{L^2(X)}+\frac{1}{n}=\norma{f_n-g^*f}_{L^2(X)}+\frac{1}{n}\to 0\]
as $n\to\infty$.
\end{proof}

From Theorem \ref{extremizers-cone} the extremizers for \eqref{restriction-cone-with-c} are all of the form
\begin{equation}
 \label{general-extremizer}
 g(x_1,x_2,x_3)=e^{-ax_3-bx_2-cx_1+d},
\end{equation}
where $a,b,c,d\in\mathbb C$ and $\ab{(\Re b,\Re c)}<\Re a$, and here $x_3=\sqrt{x_1^2+x_2^2}$. As indicated in \cite{Fo}, any 
extremizer can be obtained from $g_0(x_1,x_2,x_3)=e^{-x_3}$ by applying Lorentz transformations and dilations.

We define $G$ as the group generated by $L^t,M^s$ and $D_r$, $s,t\in(-1,1)$, $r>0$ under composition. The action of $G$ is 
given by the action of the generators as in \eqref{group-action} : $L^{t*}f=f\circ L^t,\, M^{s*}f=f\circ M^{s}$ and
$D_{r}^*f=r^{1/2}f\circ D_r$. That $G$ preserves the $L^2(\Gamma^2)$ norm follows from the Lorentz invariance of $\sigma$.
\begin{lemma}
 The set of real, $L^2$-normalized extremizers for inequality \eqref{restriction-cone-with-c} equals the orbit of 
 $g_0(y)=\pi^{-1/2}e^{-\ab{y}}$, $y\in\R^2$, under the group $G$.
\end{lemma}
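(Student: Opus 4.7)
The plan is to establish both inclusions between the orbit $G(g_0)$ and the set of real $L^2$-normalized extremizers. For the easy direction, I would observe that each generator of $G$ preserves the $L^2(\Gamma^2)$ norm (by \eqref{invariance-norm}) and also preserves the ratio $\|Tf\|_6/\|f\|_2$: for $L^{t*}$ and $M^{s*}$ this follows from the Lorentz invariance \eqref{lorentz-invariance-T}, and for dilations $D_\la^*$ it is a direct homogeneity check. Since a direct computation gives $\|g_0\|_2^2 = \pi^{-1}\int_0^{2\pi} d\te \int_0^\infty e^{-2r}\,dr = 1$, and $g_0$ is the extremizer corresponding to $a=1$, $b=0$, $c=-\tfrac{1}{2}\log\pi$ in Theorem \ref{extremizers-cone}, every element of $G(g_0)$ is a nonnegative, $L^2$-normalized extremizer.

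For the reverse inclusion, take any real $L^2$-normalized extremizer $g$. By Theorem \ref{extremizers-cone}, $g(y,|y|)=e^{-a|y|+b\cdot y+c}$ with $|\Re b|<\Re a$. The reality of $g$ forces $g=\bar g$ pointwise, which by continuity of the exponent in $y$ yields $a=\bar a$, $b=\bar b$, $c=\bar c$. Thus $a>0$ is real, $b\in\R^2$ with $|b|<a$, and $c\in\R$ is determined by the normalization $\|g\|_2=1$.

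The crux is then to realize any admissible pair $(a,b)$ by an element of $G$ acting on $g_0$. I would compute $L^{t*}g_0(y)=\pi^{-1/2}\exp(-(|y|+ty_1)/\sqrt{1-t^2})$, then stack $M^{s*}$ and $D_r^*$ to get
\[
(D_r^*M^{s*}L^{t*}g_0)(y)=r^{1/2}\pi^{-1/2}\exp\Bigl(-\frac{r(|y|+sy_2)}{\sqrt{(1-s^2)(1-t^2)}}-\frac{rty_1}{\sqrt{1-t^2}}\Bigr).
\]
Matching this with $e^{-a|y|+b\cdot y+c}$ gives the explicit solution
\[
r=\sqrt{a^2-|b|^2},\qquad s=-\frac{b_2}{a},\qquad t=-\frac{b_1}{\sqrt{a^2-b_2^2}},
\]
and the condition $|b|<a$ guarantees $r>0$ and $|s|,|t|<1$. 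Thus $h:=D_r\circ M^s\circ L^t\in G$ produces a function in $G(g_0)$ with the same exponent structure as $g$.

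Finally, both $h^*g_0$ and $g$ are $L^2$-normalized and have the same coefficients of $|y|$, $y_1$, $y_2$ in the exponent, so they agree up to a positive multiplicative constant; normalization forces this constant to be $1$, hence $g=h^*g_0\in G(g_0)$. The only real work is the bookkeeping of the change-of-variable formulas for $L^{t*}$ and $M^{s*}$ restricted to $\Gamma^2$, where one must use the already-verified identity that the third coordinate of $L^t(y,|y|)$ equals the Euclidean norm of its first two coordinates; no genuine obstacle arises beyond this routine algebra.
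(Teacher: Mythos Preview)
Your proof is correct and follows essentially the same route as the paper: both reduce to solving for $(r,s,t)$ so that $D_r^*M^{s*}L^{t*}g_0$ matches a given real extremizer, obtaining the same explicit solution $r=\sqrt{a^2-|b|^2}$, $s=-b_2/a$, $t=-b_1/\sqrt{a^2-b_2^2}$ (up to the sign convention for $b$). Your write-up is in fact slightly more complete, since you spell out the easy inclusion $G(g_0)\subseteq\{\text{extremizers}\}$ and the reason reality of $g$ forces $a,b,c$ to be real; one cosmetic point is that, with the convention $g^*f=f\circ g$, the group element realizing your displayed formula is $h=L^t\circ M^s\circ D_r$ rather than $D_r\circ M^s\circ L^t$, but this does not affect the argument since either way $h\in G$.
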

\begin{proof}
A computation shows
\begin{multline*}
L^t\circ M^s(x_1,x_2,x_3)=\\
\biggl(\frac{x_1+t(x_3+sx_2)/(1-s^2)^{1/2}}{(1-t^2)^{1/2}},\frac{x_2+sx_3}{(1-s^2)^{1/2}},
  \frac{((x_3+sx_2)/(1-s^2)^{1/2})+tx_1}{(1-t^2)^{1/2}}\biggr).
  \end{multline*}
Then
\begin{multline*}
g_0\circ L^t\circ M^s\circ D_r=\\
r^{1/2}\pi^{-1/2}\exp\Bigl(-\frac{rx_3}{(1-s^2)^{1/2}(1-t^2)^{1/2}}-\frac{srx_2}{(1-s^2)^{1/2} (1-t^2)^{1/2} } -\frac {
trx_1}{(1-t^2)^{1/2}}\Bigr).
\end{multline*}
Given $a>0$ and $b,c\in \R$ with $\ab{(b,c)}<a$, we want to solve the equations
\begin{align*}
 \frac{r}{(1-s^2)^{1/2}(1-t^2)^{1/2}}&=a,\\
\frac{sr}{(1-s^2)^{1/2}(1-t^2)^{1/2}}&=b,\\
\frac{tr}{(1-t^2)^{1/2}}&=c.
\end{align*}
Since $a\neq 0$ and $\ab{b}<a$, we have $b/a=s\in(-1,1)$. Also $c/a=t(1-s^2)^{1/2}$, so $t=c/(a(1-s^2)^{1/2})=
c/(a^2-b^2)^{1/2}$ and we see that $\ab{t}<1$. Finally $r=a(1-s^2)^{1/2}(1-t^2)^{1/2}=(a^2-b^2-c^2)^{1/2}$. The $L^2$-norm
is preserved by the action of $G$, thus a normalized, real extremizer $g(y)=e^{-a\ab{y}-by_2-cy_1+d}$ can be obtained from $g_0$
by
composing with $L^t\circ M^s\circ D_r$ for the computed values of $t,s$ and $r$.
\end{proof}
\begin{proof}[Proof of Theorem \ref{better-precompactness}]
From the previous discussion we have that the group $G$ gives all real extremizers as the orbit of $g_0$. Proposition
\ref{precompactness-to-convergence}, Theorems \ref{main-precompactness} and \ref{extremizers-cone} give a proof of Theorem
\ref{better-precompactness}.
\end{proof}
{\bf Acknowledgments.}
The author would like to thank his dissertation advisor, Michael Christ, for many helpful comments and suggestions; and
the anonymous referee for the time spent on the reading of our manuscript and for the comments that helped improve this
work.

\end{document}